\documentclass[english]{article}
\usepackage[T1]{fontenc}
\usepackage[latin9]{inputenc}
\usepackage{geometry}
\usepackage{enumitem}
\geometry{verbose,tmargin=1in,bmargin=1in,lmargin=1in,rmargin=1in}
\usepackage{amsmath}
\usepackage{amsthm}
\usepackage{amssymb}
\usepackage{enumitem}
\usepackage{mathtools}
\usepackage[textsize=scriptsize,colorinlistoftodos]{todonotes}

\makeatletter
\numberwithin{equation}{section}
\numberwithin{figure}{section}
\theoremstyle{plain}
\newtheorem{thm}{\protect\theoremname}
  \theoremstyle{plain}
  \newtheorem{lem}[thm]{\protect\lemmaname}
  \theoremstyle{plain}
  \newtheorem{prop}[thm]{\protect\propositionname}
  \theoremstyle{plain}
  \newtheorem{cor}[thm]{\protect\corollaryname}
  \theoremstyle{plain}
  \newtheorem{conj}[thm]{\protect\conjecturename}
  \theoremstyle{definition}
  \newtheorem{defn}[thm]{\protect\definitionname}
 \newlist{casenv}{enumerate}{4}
 \setlist[casenv]{leftmargin=*,align=left,widest={iiii}}
 \setlist[casenv,1]{label={{\itshape\ \casename} \arabic*.},ref=\arabic*}
 \setlist[casenv,2]{label={{\itshape\ \casename} \roman*.},ref=\roman*}
 \setlist[casenv,3]{label={{\itshape\ \casename\ \alph*.}},ref=\alph*}
 \setlist[casenv,4]{label={{\itshape\ \casename} \arabic*.},ref=\arabic*}

\makeatother

\usepackage{babel}
  \providecommand{\corollaryname}{Corollary}
  \providecommand{\definitionname}{Definition}
  \providecommand{\lemmaname}{Lemma}
  \providecommand{\propositionname}{Proposition}
  \providecommand{\conjecturename}{Conjecture}
  \providecommand{\casename}{Case}
\providecommand{\theoremname}{Theorem}

\begin{document}

\title{Online Ramsey Numbers and \\ the Subgraph Query Problem}

\author{David Conlon\thanks{Mathematical Institute, Oxford OX2 6GG,
United Kingdom. Email: {\tt david.conlon@maths.ox.ac.uk}. Research
supported by a Royal Society University Research Fellowship and by ERC Starting Grant 676632.}\and Jacob Fox\thanks{Department of Mathematics, Stanford University, Stanford, CA 94305, USA. Email: {\tt jacobfox@stanford.edu}. Research supported by a Packard Fellowship and by NSF Career Award DMS-1352121.}\and Andrey Grinshpun\and Xiaoyu He\thanks{Department of Mathematics, Stanford University, Stanford, CA 94305, USA. Email: {\tt alkjash@stanford.edu}.}}
\maketitle

\begin{abstract}
The $(m,n)$-online Ramsey game is a combinatorial game between two players, Builder and Painter. Starting from an infinite set of isolated vertices, Builder draws an edge on each turn and Painter immediately paints it red or blue. Builder's goal is to force Painter to create either a red $K_m$ or a blue $K_n$ using as few turns as possible. The online Ramsey number $\tilde{r}(m,n)$ is the minimum number of edges Builder needs to guarantee a win in the $(m,n)$-online Ramsey game. By analyzing the special case where Painter plays randomly, we obtain an exponential improvement
\[
\tilde{r}(n,n) \ge 2^{(2-\sqrt{2})n + O(1)}
\]
for the lower bound on the diagonal online Ramsey number, as well as a corresponding improvement
\[
\tilde{r}(m,n) \ge n^{(2-\sqrt{2})m + O(1)}
\]
for the off-diagonal case, where $m\ge 3$ is fixed and $n\rightarrow\infty$. Using a different randomized Painter strategy, we prove that $\tilde{r}(3,n)=\tilde{\Theta}(n^3)$, determining this function up to a polylogarithmic factor. We also improve the upper bound in the off-diagonal case for $m \geq 4$. 

In connection with the online Ramsey game with a random Painter, we study the problem of finding a copy of a target graph $H$ in a sufficiently large unknown Erd\H{o}s--R\'{e}nyi random graph $G(N,p)$ using as few queries as possible, where each query reveals whether or not a particular pair of vertices are adjacent. We call this problem the Subgraph Query Problem. We determine the order of the number of queries needed for complete graphs up to five vertices and prove general bounds for this problem. 

\end{abstract}

\section{Introduction}

The \emph{Ramsey number} $r(m,n)$ is the minimum integer $N$ such that every red/blue-coloring of the edges of the complete graph $K_N$ on $N$ vertices contains either a red $K_m$ or a blue $K_n$. Ramsey's theorem guarantees the existence of $r(m,n)$ and determining or estimating Ramsey numbers is a central problem in combinatorics. Classical results of Erd\H{o}s--Szekeres and Erd\H{o}s imply that $2^{n/2} \leq r(n,n) \leq 2^{2n}$ for $n \geq 2$. The only improvements to these bounds over the last seventy years have been to lower order terms (see~\cite{Conlon0, Sp}), with the best known lower bound coming from an application of the Lov\'asz local lemma~\cite{ErLo}.

Off-diagonal Ramsey numbers, where $m$ is fixed and $n$ tends to infinity, have also received considerable attention. In progress that has closely mirrored and often instigated advances on the probabilistic method, we now know that 
\[
r(3,n)=\Theta(n^2/\log n).
\]
The lower bound here is due to Kim \cite{Ki} and the upper bound to Ajtai, Koml\'os and Szemer\'edi \cite{AjKoSz}. Recently, Bohman and Keevash \cite{BoKe1} and, independently, Fiz Pontiveros, Griffiths and Morris \cite{FiGrMo} improved the constant in Kim's lower bound via careful analysis of the triangle-free process, determining $r(3,n)$ up to a factor of $4+o(1)$.  

More generally, for $m\ge 4$ fixed and $n$ growing, the best known lower bound is 
\[
r(m,n) = \Omega_m(n^{\frac{m+1}{2}}/(\log{n})^{\frac{m+1}{2}-\frac{1}{m-2}}),
\]
proved by Bohman and Keevash \cite{BoKe} using the $H$-free process, while the best upper bound in this setting is 
\[
r(m,n) = O_m(n^{m-1}/(\log n)^{m-2}),
\]
again due to Ajtai, Koml\'os and Szemer\'edi \cite{AjKoSz}. Here the subscripts denote the variable(s) that the implicit constant is allowed to depend on.

There are many interesting variants of the classical Ramsey problem. One such variant is the \emph{size Ramsey number} $\hat{r}(m, n)$, defined as the smallest $N$ for which there exists a graph $G$ with $N$ edges such that every red/blue-coloring of the edges of $G$ contains either a red $K_m$ or a blue $K_n$. It was shown by Chv\'atal (see Theorem 1 in the foundational paper of Erd\H os, Faudree, Rousseau and Schelp \cite{ErFaRoSc}) that $\hat{r}(m, n)$ is just the number of edges in the complete graph on $r(m,n)$ vertices, that is,
\[
\hat{r}(m, n) = \binom{r(m,n)}{2}.
\]

We will be concerned with a much-studied game-theoretic variant of the size Ramsey number, introduced independently by Beck~\cite{Beck93} and by Kurek and Ruci\'nski~\cite{KuRu}. The $(m,n)$-online Ramsey game is a game between two players, Builder and Painter, on an infinite set of initially isolated vertices. Each turn, Builder places an edge between two nonadjacent vertices and Painter immediately paints it either red or blue. The \emph{online Ramsey number} $\tilde{r}(m,n)$ is then the smallest number of turns $N$ that Builder needs to guarantee the existence of either a red $K_m$ or a blue $K_n$.

It is a simple exercise to show that $\tilde{r}(m,n)$ is related to the usual Ramsey number $r(m,n)$ by
\begin{equation} \label{eq:trivial}
\frac{1}{2} r(m,n) \le \tilde{r}(m,n) \le \binom{r(m,n)}{2}.
\end{equation}
In the diagonal case, the upper bound in (\ref{eq:trivial}) has been improved by Conlon \cite{Conlon}, who showed that for infinitely many $n$,
\[
\tilde{r}(n,n) \le 1.001^{-n} \binom{r(n,n)}{2}.
\]

The main result of this paper is a new lower bound for online Ramsey numbers.

\begin{thm}\label{thm:general-lower-bound} If, for some $m,n,N\ge1$, there exist $p\in(0,1)$, $c\le\frac{1}{2}m$,
and $d\le\frac{1}{2}n$ for which
\[
p^{\binom{m}{2}-c(c-1)}(2N)^{m-c}+(1-p)^{\binom{n}{2}-d(d-1)}(2N)^{n-d} \le \frac{1}{2},
\]
then $\tilde{r}(m,n)>N$.
\end{thm}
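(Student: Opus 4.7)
The plan is to have Painter play the random strategy that colors each edge independently red with probability $p$ and blue with probability $1-p$. I want to show that against this Painter, any Builder strategy has winning probability at most $1/2<1$; this by the probabilistic method exhibits a deterministic Painter strategy that stops Builder within $N$ moves, so $\tilde{r}(m,n)>N$.

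The first step is a union bound:
\[
\Pr[\text{Builder wins}] \;\le\; \Pr[\text{red }K_m\text{ appears}]+\Pr[\text{blue }K_n\text{ appears}].
\]
By the $(p,m,c)\leftrightarrow(1-p,n,d)$ symmetry of the problem (which swaps red and blue), the two summands are structurally the same, so it suffices to prove the single asymmetric bound
\[
\Pr[\text{red }K_m]\;\le\;p^{\binom{m}{2}-c(c-1)}(2N)^{m-c}.
\]

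For this bound I will use the key combinatorial observation that $c\le m/2$ ensures every $K_m$ contains two vertex-disjoint copies of $K_c$, accounting for exactly $2\binom{c}{2}=c(c-1)$ of its edges. For each ordered $(m-c)$-tuple $T$ of distinct vertices in Builder's graph (of which there are at most $(2N)^{m-c}$, since Builder's graph has at most $2N$ vertices), I will define a relaxed event $E_T$ which any red $K_m$ corresponding to $T$ under a canonical decomposition must satisfy. The event $E_T$ will require the $\binom{m}{2}-c(c-1)$ edges of a $K_m$ on $T$ plus a distinguished $c$-set, \emph{excluding} the edges internal to the two disjoint $K_c$'s, to all be colored red. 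Because Painter colors independently, $\Pr[E_T]\le p^{\binom{m}{2}-c(c-1)}$, and summing over the $(2N)^{m-c}$ choices of $T$ yields the claimed bound.

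The main obstacle is setting up the canonical decomposition so that the counting does not also require a union bound over the external $c$-set outside $T$, which would introduce a spurious factor $\binom{V}{c}$ and inflate the bound to order $(2N)^m$. The natural way to avoid this is to fix, for each $T$, a canonical external $c$-subset — determined by the ordering in which Builder introduces vertices, or by a fixed lexicographic rule — and then argue that each red $K_m$ is captured by at least one tuple $T$ under this decomposition, with probability correctly bounded by $p^{\binom{m}{2}-c(c-1)}$. Verifying both that this canonical rule is consistent across all red $K_m$ in Builder's (adaptively constructed) graph and that it correctly leverages the independence of Painter's coin flips is the central technical step; this is where the adaptivity of Builder against the \emph{randomized} Painter is crucial, giving the trade-off between $(2N)^{m-c}$ and $p^{-c(c-1)}$ that a purely naive union bound cannot achieve.
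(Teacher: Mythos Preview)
Your setup is right and matches the paper: Painter plays randomly with parameter $p$, and by symmetry it suffices to bound the expected number of red $K_m$'s by $p^{\binom{m}{2}-c(c-1)}(2N)^{m-c}$. But the step you flag as ``the main obstacle'' is in fact a genuine gap, and the plan you sketch for it does not work. If $S_T$ is chosen by any rule that is fixed in advance (lexicographic, etc.), then for a red $K_m$ on a vertex set $V$ there is no reason any $(m-c)$-subset $T\subset V$ should satisfy $S_T=V\setminus T$, so the union $\bigcup_T E_T$ need not cover the event $\{$red $K_m\}$. If instead $S_T$ is chosen adaptively from Builder's play, then the set of edges whose redness $E_T$ demands is itself a random variable depending on Painter's earlier coin flips, and the clean bound $\Pr[E_T]\le p^{\binom{m}{2}-c(c-1)}$ is no longer justified by independence alone. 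Either way the argument as written cannot be completed, and your closing paragraph does not indicate a mechanism that resolves this.

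The paper handles the adaptivity with a different idea: rather than a static union bound, it tracks an evolving weight $w(U,t)=p^{\binom{m}{2}-e(G_t[U])}$ on $m$-sets $U$ that are currently all-red, restricted to those $U$ whose induced graph has minimum vertex cover at least $c$. The key observation is that $\mathbb{E}\,w(U,t)$ is a martingale, and a set $U$ first acquires vertex cover $\ge c$ at the moment some edge $e_t$ is built disjoint from a size-$(m-2)$ subset $U'$ with cover $c-1$; at that moment at most $2(c-1)$ of the edges between $e_t$ and $U'$ can already be built, giving $w(U,t)\le p^{2m-2c-2}w(U',t)$. Iterating this peeling argument $c$ times yields the exponent $\binom{m}{2}-c(c-1)$ and the factor $(2N)^{m-c}$. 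The vertex-cover filtration, not a canonical external set, is what makes the $(2N)^{m-c}$ count compatible with the probability bound under Builder's adaptive strategy.
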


In particular, if $\tilde{r}(n) :=\tilde{r}(n,n)$ is the diagonal online Ramsey number, Theorem~\ref{thm:general-lower-bound} can be used to improve the classical bound $\tilde{r}(n) \ge 2^{n/2 - 1}$ by an exponential factor. Indeed, taking $p=\frac{1}{2}$ and $c = d \approx(1-\frac{1}{\sqrt{2}})n$ in
Theorem~\ref{thm:general-lower-bound}, we get the following immediate corollary.

\begin{cor} \label{cor:diagonalonline}
For the diagonal online Ramsey numbers $\tilde{r}(n)$,
\[
\tilde{r}(n)\ge 2^{(2-\sqrt{2})n-O(1)}.
\]
\end{cor}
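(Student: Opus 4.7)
The proof is a direct application of Theorem~\ref{thm:general-lower-bound} with the symmetric parameter choices $p = \tfrac12$ and $c = d$, so I would simply substitute and then optimize. With these choices and $m=n$, the two summands in the hypothesis coincide, reducing the condition to
\[
2 \cdot 2^{-\binom{n}{2} + c(c-1)} (2N)^{n-c} \le \tfrac12,
\]
or equivalently
\[
(n-c)\log_2(2N) \;\le\; \binom{n}{2} - c(c-1) - 2.
\]
Thus it suffices to pick an integer $c$ with $1 \le c \le n/2$ maximizing
\[
f(c) \;:=\; \frac{\binom{n}{2} - c(c-1)}{n-c},
\]
and then take $N$ to be roughly $2^{f(c)}$.

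The main computation is then the optimization of $f(c)$. Substituting $u := n - c$ and expanding yields
\[
f(c) \;=\; 2n - u - \frac{n(n-1)}{2u} + O(1),
\]
so by AM--GM the maximum occurs at $u^\ast = \sqrt{n(n-1)/2} = n/\sqrt{2} + O(1)$, corresponding to $c^\ast = (1 - 1/\sqrt{2})\,n + O(1)$. This $c^\ast$ comfortably satisfies $c^\ast \le n/2$, since $1 - 1/\sqrt{2} < 1/2$. Plugging back in,
\[
f(c^\ast) \;=\; 2n - \sqrt{2n(n-1)} - O(1) \;=\; (2-\sqrt{2})\,n - O(1).
\]

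Finally, rounding $c^\ast$ to the nearest integer perturbs $f$ by at most $O(1)$ (the first-order Taylor term vanishes at the optimum), so the choice $N = \lfloor 2^{(2-\sqrt{2})n - O(1)} \rfloor$ satisfies the hypothesis of Theorem~\ref{thm:general-lower-bound} and gives $\tilde{r}(n) > N \ge 2^{(2-\sqrt{2})n - O(1)}$. There is no real obstacle: once Theorem~\ref{thm:general-lower-bound} is granted, the corollary reduces to a one-variable calculus exercise, and the only genuine ``idea'' is the symmetric ansatz $p = \tfrac12$, $c = d$, which is obviously natural for the diagonal case.
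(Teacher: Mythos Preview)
Your proof is correct and follows exactly the approach the paper indicates: set $p=\tfrac12$, $c=d\approx(1-1/\sqrt{2})n$ in Theorem~\ref{thm:general-lower-bound}, and read off the bound. The paper merely states these parameter choices without carrying out the optimization, so your write-up simply supplies the routine calculus that the paper leaves implicit.
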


As for the off-diagonal case, when $m$ is fixed and $n\rightarrow \infty$, Theorem~\ref{thm:general-lower-bound} can be also used to substantially improve the best-known lower bound. In this case, we take $c\approx(1-\frac{1}{\sqrt{2}})m$, $d=0$, and $p=C\frac{m\log n}{n}$
for a sufficiently large $C>0$ to obtain the following corollary.

\begin{cor} \label{cor:offdiagonalonline}
For fixed $m\ge3$ and $n$ sufficiently large in terms of $m$, 
\[
\tilde{r}(m,n)\ge n^{(2-\sqrt{2})m-O(1)}.
\]
\end{cor}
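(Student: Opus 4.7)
The plan is to apply Theorem~\ref{thm:general-lower-bound} with the parameter choice indicated just above the statement: take $d = 0$, $c$ equal to the nearest integer to $(1-\tfrac{1}{\sqrt{2}})m$, and $p = C(\log n)/n$ for a sufficiently large constant $C = C(m)$. Since $1-1/\sqrt{2} < 1/2$, the requirement $c \le \tfrac{1}{2}m$ holds. I will try to verify the hypothesis for $N = \lfloor n^{(2-\sqrt{2})m - K} \rfloor$, with $K = K(m)$ a constant to be fixed large enough.

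Bounding the ``blue'' (second) term is straightforward. With $d=0$, this term equals
$(1-p)^{\binom{n}{2}}(2N)^n \le \exp(-p\binom{n}{2})(2N)^n \le n^{-(C/2 - (2-\sqrt{2})m)n + O(n)}$,
so choosing $C > 2(2-\sqrt{2})m$ bounds it by $n^{-\Omega(n)} < 1/4$ for all sufficiently large $n$.

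For the ``red'' (first) term, I would compute the relevant exponents by hand. From $c = (1-\tfrac{1}{\sqrt{2}})m + O(1)$ and the identity $(1-1/\sqrt{2})^2 = 3/2 - \sqrt{2}$, one obtains
\[
\binom{m}{2} - c(c-1) = (\sqrt{2}-1)m^2 + O(m), \qquad m-c = m/\sqrt{2} + O(1).
\]
With $p = C(\log n)/n$ and $N = n^{(2-\sqrt{2})m - K}$, and noting that for fixed $m$ the logarithmic and constant factors absorb into $n^{o(1)}$ as $n \to \infty$, the red term reduces to $n^{o(1)}$ times
\[
n^{-(\sqrt{2}-1)m^2 + \frac{m}{\sqrt{2}}((2-\sqrt{2})m - K)}.
\]
The algebraic identity $(2-\sqrt{2})/\sqrt{2} = \sqrt{2}-1$ makes the leading $m^2$ contributions cancel exactly, leaving an exponent of the form $-Km/\sqrt{2} + O(m)$, where the implicit constant depends only on $m$. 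Taking $K = K(m)$ large enough makes this term at most $1/4$, and combined with the blue bound this verifies the hypothesis of Theorem~\ref{thm:general-lower-bound}, yielding $\tilde{r}(m,n) > N = n^{(2-\sqrt{2})m - O(1)}$.

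The main point to check carefully is the precise cancellation in the last display, which simultaneously confirms that $(2-\sqrt{2})m$ is the best exponent obtainable by this method and identifies $c = (1-1/\sqrt{2})m$ as the optimal balancing choice (a quick calculus check on the exponent as a function of $c$ verifies this is the maximizer, analogous to the choice that produces Corollary~\ref{cor:diagonalonline}). The remaining steps are just careful bookkeeping of $m$-dependent constants, which are all absorbed into the $O(1)$ correction in the exponent of $n$.
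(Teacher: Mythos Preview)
Your proposal is correct and follows exactly the parameter choice the paper sketches (the paper gives no further details beyond ``take $c\approx(1-1/\sqrt{2})m$, $d=0$, and $p=C\frac{m\log n}{n}$''). One small bookkeeping point: in the red exponent, the $O(1)$ rounding in $m-c=m/\sqrt{2}+O(1)$ contributes an $O(K)$ term when multiplied by $-K$, so your claim that the residual $O(m)$ is independent of $K$ is not quite right as stated; the clean fix is to keep the exact coefficient $-K(m-c)$ with $m-c\ge 2$ for $m\ge 3$, after which the remaining term really is a constant depending only on $m$ and the choice of $K$ is non-circular.
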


For general $m$, Corollary~\ref{cor:offdiagonalonline} gives the best known lower bounds for the off-diagonal online Ramsey number. However, it is possible to do better for $m=3$ by using a smarter Painter strategy which deliberately avoids building red triangles.

\begin{thm} \label{thm:alterations} For $n\rightarrow \infty$,
\[
\tilde{r}(3,n) = \Omega \left(\frac{n^3}{\log^2 n}\right).
\]
\end{thm}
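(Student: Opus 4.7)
The plan is to exhibit a randomized Painter strategy that deterministically avoids a red $K_3$ and, with positive probability, survives $N = cn^3/\log^2 n$ turns without producing a blue $K_n$, for suitable constants. Fix $p = C(\log n)/n$ with $C$ large and $c$ small. Painter's strategy: on each move, independently flip a $(p,1-p)$ red/blue coin; color the edge red if the coin says red \emph{and} the edge does not close a red triangle with previously drawn red edges, and color it blue otherwise. This is a legal online strategy and tautologically forbids a red $K_3$, so it suffices to show $\mathbb{E}[Y] < 1$, where $Y$ counts the blue $K_n$'s in Builder's graph $G$ after $N$ edges.

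The main calculation is a first-moment bound on $\Pr[S \text{ is blue}]$ for a fixed $n$-subset $S \subseteq V(G)$ forming a clique in $G$. Conditioning on the set $F \subseteq \binom{S}{2}$ of edges whose coin came up red, for $S$ to be entirely blue every $e = uv \in F$ must have been triangle-forced; since $S$ is blue, the witness vertex $w$ must lie \emph{outside} $S$, with both $uw$ and $vw$ (coin-)red. A union bound over witnesses, exploiting the absolute independence of the coin flips and the fact that $w \in N_G(u)\cap N_G(v)$, gives $\Pr[\text{all }e \in F\text{ altered}] \lesssim \prod_{e \in F} d_e p^2$, where $d_e$ is the codegree of $e$ in $G$. (Configurations with shared witnesses contribute only lower-order terms in our regime.) Summing over $F$ yields
\[
\Pr[S \text{ is blue}] \le (1-p)^{\binom{n}{2}} \prod_{e \in \binom{S}{2}} \Bigl(1 + \tfrac{p^3 d_e}{1-p}\Bigr) \le \exp\!\Bigl(-p\binom{n}{2} + O(p^3)\sum_{e \in \binom{S}{2}} d_e\Bigr).
\]

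The decisive structural inequality is $\sum_{e \in \binom{S}{2}} d_e \le \binom{n}{2}(n-2) + \sum_{w \notin S}\binom{\deg_S(w)}{2} = O(n^3 + nN)$, where the first term accounts for common neighbors inside $S$ and the second comes from convexity together with Builder's budget $\sum_w \deg_S(w) \le 2N$ and the pointwise bound $\deg_S(w) \le n$. Plugging in $p = C(\log n)/n$ and $N = cn^3/\log^2 n$, the correction $p^3 \sum_e d_e$ is $O(C^3 c \cdot n \log n)$, dominated by $p\binom{n}{2}/2 = \Theta(Cn\log n)$ once $c$ is small compared to $C^{-2}$. Hence $\Pr[S \text{ blue}] \le \exp(-p\binom{n}{2}/2) = n^{-\Omega(Cn)}$, and the trivial estimate $K_n(G) \le (2eN/n)^n = n^{(2+o(1))n}$ forces $\mathbb{E}[Y] \to 0$ for $C$ large enough, completing the argument. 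The main obstacle is the dependence structure introduced by triangle avoidance: the blueness of different edges of $S$ is coupled through shared witness vertices, and an altered edge strictly speaking requires its witness edges themselves to have survived alteration. The witness-based union bound sidesteps this by working with the coin-red event rather than the actually-red event (a valid upper bound) and by noting that, in our regime where the typical product $d_e p$ is large, assignments with shared witnesses are lower-order compared to the "all distinct" contribution $\prod_e d_e p^2$.
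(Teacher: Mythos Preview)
Your argument has a genuine gap at the step
\[
\Pr[\text{all }e\in F\text{ witnessed}] \lesssim \prod_{e\in F} d_e p^2.
\]
The events ``$e$ is witnessed by some $w\notin S$ with $uw,vw$ coin-red'' are \emph{increasing} in the external coin flips, so by Harris/FKG the product $\prod_e \Pr[e\text{ witnessed}]$ is a \emph{lower} bound on the probability of the intersection, not an upper bound. Your parenthetical that shared-witness configurations are lower order does not rescue this: a single external vertex $w$ with all $n$ edges $vw$ coin-red (probability $p^n$) witnesses every pair in $S$ simultaneously, and this event alone forces $\Pr[\text{all }e\in F\text{ witnessed}]\ge p^n$ for $F=\binom{S}{2}$, which exceeds $\prod_e d_e p^2$ whenever $d_e p^2<1$. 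Concretely, if Builder attaches $M\approx cn^2/\log^2 n$ external vertices, each joined to all of $S$ (costing $Mn\le N$ edges), your displayed bound gives $\Pr[S\text{ blue}]\le\exp(-\Theta(Cn\log n))$, whereas in fact $\Pr[S\text{ blue}]\ge p^n\approx n^{-n}$; for $C>2$ your bound is smaller than the truth. Since you need $C$ large for the union bound over $n$-cliques, the argument does not close.

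The paper's proof differs in two essential ways that are not cosmetic. First, vertices are only ``activated'' once their degree reaches $(n-1)/4$; low-degree edges are painted blue deterministically. Any blue $K_n$ therefore lives among at most $r=8N/(n-1)=O(n^2/\log^2 n)$ active vertices, and Painter pre-samples a fixed $G(r,p)$ on these labels. Second---and this is the crux---the paper does not attempt to bound the probability that every coin-red edge in $S$ is altered. Instead, its Lemma~3.1 shows that in $G(r,p)$, with high probability \emph{every} $n$-set has at most $n^2/10$ pairs with a common neighbor outside it; hence at most $n^2/10$ edges of any $S$ can possibly be altered, and the remaining $\Theta(n^2)$ active edges must each be coin-blue, giving $(1-p)^{\Theta(n^2)}$ cleanly. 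This bounds the \emph{number} of altered edges deterministically (on a high-probability event), sidestepping the positive correlation entirely. Your first-moment computation cannot be repaired without some such structural input controlling how many pairs in $S$ can share an external red common neighbor.
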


Roughly speaking, Painter's strategy is to paint every edge blue initially, but to switch to painting randomly if both endpoints of a freshly built edge have high degree. Also, when presented with an edge that would complete a red triangle, Painter always paints it blue. The bound given in Theorem~\ref{thm:alterations} is $n$ times the bound on the usual Ramsey number that comes from applying the Lov\'asz Local Lemma~\cite{ErLo}. However, our argument is closer in spirit to an earlier proof of the same bound given by Erd\H{o}s~\cite{Er} using alterations. This method for lower bounding $r(3,n)$ was later generalized to all $r(m,n)$ by Krivelevich~\cite{Kr1} and we suspect that Theorem~\ref{thm:alterations} can be generalized to $\tilde{r}(m,n)$ in the same way.

In the other direction, we prove a new upper bound on the off-diagonal online Ramsey number.

\begin{thm} \label{thm:online-upper-bound}
For any fixed $m\ge3$,
\[
\tilde{r}(m,n)= O_m\left(\frac{n^{m}}{\left(\log n\right)^{\lfloor m/2\rfloor-1}}\right).
\]
\end{thm}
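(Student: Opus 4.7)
Builder's strategy is a recursive book-building procedure. At each recursion level she maintains a red book consisting of a spine $S$ (a red clique) and a page set $P$ of vertices each connected in red to every vertex of $S$; her goal is to extend the spine to size $m-1$ while retaining at least one page, at which point the spine plus any remaining page forms a red $K_m$. A \emph{basic step} at spine size $k$ proceeds by selecting a page $u \in P$, querying all edges from $u$ to $P \setminus \{u\}$, and either promoting $u$ to the spine (if at least $p_{k+1}$ of those edges are red, with the red neighbors becoming the new page set) or falling back on the blue neighborhood of $u$ to recurse for the $(m, n-1)$ problem, since a blue $K_{n-1}$ there combined with $u$ yields a blue $K_n$.

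With thresholds $p_k \sim n^{m-1-k}$ chosen so that the spine grows to $m-1$ after $m-1$ basic steps, each recursion level costs $\sum_{k=0}^{m-2}(p_k - 1) = O_m(n^{m-1})$ queries, and chaining at most $n$ levels of blue fallback yields the baseline bound $\tilde r(m,n) = O_m(n^m)$ by a straightforward induction on $m$ and $n$.

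To save the $(\log n)^{\lfloor m/2 \rfloor - 1}$ factor, Builder replaces the basic step with an enhanced \emph{pair step} at roughly every other level. In a pair step Builder picks two candidate pages $u_1, u_2$, queries $(u_1, u_2)$, and then queries both $u_1, u_2$ against the remaining pages; the other pages split into four classes according to their colors to $(u_1, u_2)$, namely RR, RB, BR, BB. A careful counting argument, whose crux is an Ajtai-Koml\'os-Szemer\'edi-type inequality applied to the auxiliary red-degree graph on the pages, shows that either the RR class is large enough to promote $u_1$ and $u_2$ to the spine simultaneously with only a $\log n$ slowdown over the naive bound for two single promotions, or one of the other classes is large enough to yield progress on the blue side via an adjusted fallback recursion. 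Since the spine is grown to size $m-1$ using $\lfloor m/2 \rfloor - 1$ such pair steps (plus basic steps for the remaining increments), the total saving is $(\log n)^{\lfloor m/2 \rfloor - 1}$.

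The main obstacle will be the careful execution and bookkeeping of the pair step, in particular handling the mixed RB and BR cases where only one of $u_1, u_2$ can be productively promoted. One must verify that these partial failures still contribute to the red or blue structure in a way consistent with the recursion, and that the $\log$ savings indeed compound cleanly through the two-variable induction on $(m, n)$.
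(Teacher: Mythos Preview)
Your baseline $O_m(n^m)$ argument is correct and matches the paper's starting intuition. However, the enhancement via the ``pair step'' is where your proposal and the paper diverge, and your pair-step mechanism has a genuine gap.

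The paper does not attempt to gain a $\log n$ factor per pair of spine increments. Instead, it runs the \emph{basic} one-vertex-at-a-time Erd\H{o}s--Szekeres branching, but with all thresholds uniformly scaled down by a factor $\mathcal{L}$ (so $f(i,j) = \mathcal{L}^{-1}\binom{i+j-2}{i-1}$), and it \emph{stops branching early}: as soon as the red coordinate drops to $m_0 = \lfloor m/2\rfloor + 1$ or the blue coordinate to $n_0 = \lfloor\sqrt{n}\rfloor$. At that point Builder simply fills in \emph{all} edges among the surviving $f(m_0,n')$ or $f(m',n_0)$ vertices. The Ajtai--Koml\'os--Szemer\'edi bound $r(m_0,n') = O_{m_0}\big({n'}^{m_0-1}/(\log n')^{m_0-2}\big)$ is then invoked \emph{as a black box} on this complete subgraph to guarantee a red $K_{m_0}$ or blue $K_{n'}$, which the branching path completes to a red $K_m$ or blue $K_n$. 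The choice $m_0 = \lfloor m/2\rfloor + 1$ is exactly what makes $\mathcal{L} = \Theta_m\big((\log n)^{\lfloor m/2\rfloor - 1}\big)$ feasible while keeping the final fill-in cost $f(m_0,n)^2$ (and $f(m,n_0)^2$) no larger than the branching cost $O_m(n\,f(m,n))$.

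Your pair-step approach, by contrast, tries to extract the AKS gain \emph{adaptively} inside the branching. The claimed ``AKS-type inequality applied to the auxiliary red-degree graph on the pages'' is never specified, and I do not see how to make it work: Painter colors adversarially, so after querying only $u_1,u_2$ against $P$ you have no structural information about the red graph \emph{among} the other pages---which is precisely what any AKS-style density-versus-independence argument would require. To see that structure you would need to query a dense portion of $\binom{P}{2}$, but that is exactly the fill-in step the paper postpones to the very end, and performing it at each recursion level would destroy the budget. You flag the RB/BR bookkeeping as the main obstacle; in fact the RR case is already unjustified, since nothing in the information revealed forces the common red neighborhood of $u_1,u_2$ to exceed, by a $\log n$ factor, what the product of individual red densities predicts.
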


In particular, note that Theorems~\ref{thm:alterations} and~\ref{thm:online-upper-bound} determine the asymptotic growth rate of $\tilde{r}(3,n)$ up to a polylogarithmic factor, namely,
\[\Omega\left(\frac{n^3}{\log^2 n}\right) \leq \tilde{r}(3,n) \leq O\left(n^3\right).\]

Theorem~\ref{thm:online-upper-bound} has a similar flavor to the improvement on diagonal online Ramsey numbers made by the first author~\cite{Conlon} and work on the so-called vertex online Ramsey numbers due to Conlon, Fox and Sudakov~\cite{CoFoSu}. It is obtained by adapting the standard Erd\H{o}s--Szekeres proof of Ramsey's theorem to the online setting and applying a classical result of Ajtai, Koml\'os and Szemer\'edi~\cite{AjKoSz} bounding $r(m,n)$.

In order to prove Theorem~\ref{thm:general-lower-bound}, we specialize to the case where Painter plays randomly. This is sufficient because Builder, who we may assume has unlimited computational resources, will always respond in the best possible manner to Painter's moves. Therefore, if a random Painter can stop this perfect Builder from winning within a certain number of moves with positive probability, an explicit strategy exists by which Painter can delay the game up to this point. This motivates the following key definition.

\begin{defn}
For $m,n \ge 3$ and $p\in(0,1)$, define $\tilde{r}(m,n;p)$ to be the number of turns Builder needs to win the $(m,n)$-online Ramsey game with probability at least $\frac{1}{2}$ against a Painter who independently paints each edge red with probability $p$ and blue with probability $1-p$. The \emph{online random Ramsey number} $\tilde{r}_{\text{rand}}(m,n)$ is the maximum value of $\tilde{r}(m,n;p)$ over $p\in(0,1)$.
\end{defn}

We note that there is a rich literature on simplifying the study of various combinatorial games by specializing to the case where one or both players play randomly (see~\cite{Beck, HKSS, Kr2}). For example, a variant of the online Ramsey game with random Builder instead of random Painter was studied by Friedgut et al. \cite{FKRRT}. 

We make the following conjectures about the growth rate of $\tilde{r}_{\text{rand}}(m,n)$.

\begin{conj}\label{conj:2/3}
\begin{enumerate}[label=(\alph*)]
\item The diagonal online random Ramsey numbers satisfy
\[
\tilde{r}_{\text{\normalfont rand}}(n,n) = 2^{(1+o(1))\frac{2}{3}n}.
\]
\item The off-diagonal online random Ramsey numbers ($m\ge 3$ fixed and $n\rightarrow\infty$) satisfy
\[
\tilde{r}_{\text{\normalfont rand}}(m,n) = n^{(1+o(1))\frac{2}{3}m}. 
\]

\end{enumerate}
\end{conj}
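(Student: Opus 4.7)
The plan is to reduce the conjecture to a sharp bound on the Subgraph Query Problem (SQP) for cliques, which is the natural abstraction of playing against a biased random Painter. Let $Q(K_r,p)$ denote the minimum number of adaptive queries needed to locate a copy of $K_r$ in $G(\infty,p)$ with probability at least $\tfrac{1}{2}$. Since a Painter who colors each edge red independently with probability $p$ generates a $G(\infty,p)$ on the red edges and a $G(\infty,1-p)$ on the blue edges, and Builder may commit in advance to chasing only one of the two targets, one has
\[
\tilde{r}_{\text{\normalfont rand}}(m,n) \;=\; \max_{p\in(0,1)}\min\!\bigl\{Q(K_m,p),\,Q(K_n,1-p)\bigr\}
\]
up to a $1+o(1)$ factor in the exponent. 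The upper bound here is immediate, and the lower bound follows from a boosting argument: if Builder wins the Ramsey game with probability $\tfrac{1}{2}$, then with probability $\tfrac{1}{4}$ the winning clique has one specific color, which yields a strategy for the corresponding SQP instance.

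Given this reduction, both parts of Conjecture~\ref{conj:2/3} follow from the single asymptotic
\[
Q(K_r,p) \;=\; p^{-\left(\tfrac{2}{3}+o(1)\right)r} \qquad (r\to\infty,\ p\to 0).
\]
For part (a), setting $p=\tfrac{1}{2}$ makes the two sides symmetric and yields $\tilde{r}_{\text{\normalfont rand}}(n,n)=2^{(2/3+o(1))n}$. For part (b), the worst $p$ for Builder should sit near $p\sim m(\log n)/n$, where the red cost $p^{-(2/3)m}$ matches the blue cost $(1-p)^{-(2/3)n}\approx\exp\!\bigl(\tfrac{2}{3}pn\bigr)$, each side equal to $n^{(2/3)m+o(1)}$ after absorbing polylogarithmic factors.

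The main obstacle is thus the clique SQP exponent itself. The paper establishes this exponent for $r\le 5$ via explicit Builder recipes matched with tailored counting lower bounds, and extending this to general $r$ is the crucial step. For the upper bound, I anticipate a recursive Builder strategy that assembles $K_r$ by gluing together smaller red cliques along shared substructures, so that each additional vertex costs only $p^{-2/3}$ queries on average rather than the naive greedy $p^{-1}$. The matching lower bound will be harder: Theorem~\ref{thm:general-lower-bound}, which is essentially a first-moment argument, yields only the weaker $(2-\sqrt{2})r$, so one must account for the correlations between overlapping potential $K_r$ copies, most likely through an entropy-compression or mutual-information style argument bounding how much ``clique information'' Builder can extract per query against a suitably correlated Painter.
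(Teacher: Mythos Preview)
Your reduction is exactly what the paper does. Theorem~\ref{thm:query-online-connection} establishes $\tilde{r}(m,n;p)\asymp\min\{f(K_m,p),f(K_n,1-p)\}$ (your $Q$ is the paper's $f$) via the same boosting argument you sketch, and Theorem~\ref{thm:opt-p} pins down the optimizing $p$ in the off-diagonal case at $\Theta\bigl(\tfrac{m}{n}\log\tfrac{n}{m}\bigr)$, matching your heuristic. The paper then shows formally that Conjecture~\ref{conj:query-2/3} (the SQP exponent $\tfrac{2}{3}$) implies Conjecture~\ref{conj:2/3}, which is precisely your outline.

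But note that this statement is a \emph{conjecture} in the paper, not a theorem; the paper does not prove it and neither do you. One place where you are too pessimistic: the upper bound $f(K_r,p)\le 2^{o(r)}p^{-\frac{2}{3}r+c_r}$ is established for \emph{all} $r$ in Theorem~\ref{thm:detailed-upper-bound} via the Branch and Fill strategy of Lemma~\ref{lem:branch-and-fill}, not just for $r\le 5$. So the only genuine gap is the matching lower bound on $f(K_r,p)$ for $r\ge 6$. You correctly observe that the vertex-cover/first-moment bound of Theorem~\ref{thm:f-bound} tops out at exponent $(2-\sqrt{2})r$, and you gesture toward entropy or mutual-information methods to close the remaining gap---but no concrete argument is given, and none is currently known. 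Your proposal thus reproduces the paper's conditional reduction faithfully but, like the paper itself, leaves the conjecture open.
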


These conjectures are motivated by a connection with another problem, which we now describe. 

Let $p\in(0, 1)$ be a fixed probability and suppose Builder plays the following one-player game, which we call the {\it Subgraph Query Game}, on the random graph $G(\mathbb{Z},p)$ with infinitely many vertices. The edges of the graph are initially hidden. At each step, Builder queries a single pair of vertices and is told whether the pair is an edge of the graph or not. Equivalently, the graph starts out empty and each edge is successfully built by Builder with probability $p$ (each edge may be queried at most once). In what follows, we use the terms ``query'' and ``build'' interchangeably. 

Builder's goal is to find a copy of a given graph $H$ in the ambient random graph as quickly as possible. We call this problem of minimizing the number of steps in the Subgraph Query Game the \emph{Subgraph Query Problem}. When $H = K_m$, this may be seen as a variant of the online random Ramsey game, but where Builder is only interested in finding a red copy of $K_m$.

A version of this problem was studied independently by Ferber, Krivelevich, Sudakov and Vieira \cite{FKSV,FKSV2}, although they were interested in querying for long paths and cycles in $G(n,p)$. For instance, they showed that if $p \ge \frac{\log n + \log \log n + \omega(1)}{n}$, then it is possible to find a Hamiltonian cycle with high probability in $G(n,p)$ after $(1+o(1))n$ positive answers. In contrast, we are mainly interested in the setting where $H$ is a fixed graph to be found in a much larger random graph. 

\begin{defn} If $p\in(0,1)$, define $f(H,p)$ to be the minimum (over all Builder strategies) number of turns Builder needs to  be able to build a copy of $H$ with probability at least $1/2$ in the Subgraph Query Game, if each edge is built successfully with probability $p$.
\end{defn}

It might appear equally reasonable to study the minimum number of turns in which one can build at least one copy of $H$ {\it in expectation}. However, for certain $H$, such as a clique $K_m$ together with many leaves off a single vertex, it is possible to describe a strategy which has a tiny probability of successfully constructing copies of $H$, but upon success immediately builds a large number of copies, attaining low success probability but high expectation. Such a strategy is undesirable for application to online random Ramsey numbers, so we use the first definition instead.

Conjecture~\ref{conj:2/3} is motivated by the following conjecture regarding $f(K_m, p)$. The upper bound in this conjecture is proved in Section~\ref{sec:upper-bound-strat}.

\begin{conj} \label{conj:query-2/3} For any $m \ge 4$, 
\begin{equation*}
f(K_m, p) = 2^{o(m)}p^{-\frac{2}{3}m + c_m},
\end{equation*}
where
\[
c_m = \begin{cases}
\frac{m}{2m-3} & m\equiv0\pmod3\\
\frac{2}{3} & m\equiv1\pmod3\\
\frac{2m+8}{6m-3} & m\equiv2\pmod3.
\end{cases}
\]
\end{conj}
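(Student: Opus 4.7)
The target combines the proven upper bound (Section~\ref{sec:upper-bound-strat}) with a matching conjectural lower bound. I would approach both directions, noting up front that the lower bound is the substantively open part: Theorem~\ref{thm:general-lower-bound} specialized to the subgraph query setting already yields $f(K_m, p) \ge p^{-m/2 + O(1)}$ (by taking $c = m/2$), but the conjectured $p^{-2m/3 + c_m}$ is significantly stronger, so the plan must deliver a genuine improvement on the moment method.

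For the upper bound, my plan is a recursive ``library-and-extension'' Builder strategy. Fix $k$ near $m/3$ and decompose $K_m = K_k \cup K_{m-k}$ with all cross-edges; the task is to locate, in the common neighborhood of some pre-built $K_k$, a set of $m-k$ vertices that themselves form a clique. Builder constructs a library of $L \approx p^{-k(m-k) - \binom{m-k}{2}}$ copies of $K_k$ recursively, extending a library of $K_{j-1}$'s to a library of $K_j$'s by common-neighborhood probes and paying the appropriate power of $p^{-1}$ at each level. A union bound over the library then yields at least one $K_m$ with constant probability. Summing costs across the recursion levels gives a total query count of $p^{-2m/3 + c_m + o(1)}$; the three-case formula for $c_m$ arises because the optimal $k$ must be an integer, and the rounding correction depends on whether $m \equiv 0, 1, 2 \pmod 3$.

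For the lower bound, I would use an adaptive moment method. Against a deterministic Builder (Yao's principle handles randomization), the idea is to bound the expected number of discoverable $K_m$-copies after $N$ queries by recursion on the expected number of discoverable $K_c$-cores for smaller $c$. Setting
\[
M_c(N) := \max_{\text{strategies with } N \text{ queries}} \mathbb{E}\bigl[\#\text{discovered } K_c\text{-copies}\bigr],
\]
I would prove the recursion $M_m(N) \le M_c(N) \cdot (2N)^{m-c} p^{c(m-c) + \binom{m-c}{2}}$ by decomposing each $K_m$ into a $K_c$ core plus $m-c$ extension vertices, where each extension vertex contributes $c$ cross-edges to the core (yielding the $p^{c(m-c)}$ factor and an $N$-factor in combinatorial choice) and the $\binom{m-c}{2}$ internal extension edges further reduce the probability. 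Requiring $M_m(N) \ge 1/2$ and iterating the recursion with $c$ chosen optimally at each level should give $N \ge p^{-2m/3 + c_m + o(1)}$, with the three-case formula for $c_m$ emerging from the same integer rounding as in the upper bound, so that the two constructions match exactly.

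The main obstacle is establishing the recursive expectation bound for $M_c(N)$: the proof of Theorem~\ref{thm:general-lower-bound} gives a one-shot counting that only reaches $p^{-m/2}$, and closing the gap to $p^{-2m/3}$ requires a much more delicate accounting of Builder's adaptivity. I expect this to demand a martingale argument charging each new query against the partial $K_m$-structures it could complete, combined with a coupling that tracks how each positive response re-enters Builder's decision tree and alters which cores remain viable. The exact three-case formula for $c_m$ would then follow from integer-optimizing the same expression in both the upper and lower directions. This adaptive-moment step is precisely the ingredient that has resisted realization, and it is, I believe, the reason the statement remains a conjecture rather than a theorem.
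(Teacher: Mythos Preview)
You correctly identify that only the upper bound is proved in the paper and that the lower bound remains open. However, your upper-bound sketch diverges from the paper's actual argument and, as written, does not add up.

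The paper's proof (Lemma~\ref{lem:branch-and-fill} and Theorem~\ref{thm:detailed-upper-bound}) does not build a library of many $K_k$'s. It builds a \emph{single} clique $U$ of size $a \approx m/3 - 1$, finds a set $W$ of roughly $p^{a}T$ common neighbors of $U$ (where $T = p^{-2m/3 + c_m}$ is the total query budget), and then repeats a ``branch-and-fill'' step $p^{-\alpha}$ times: pick $w_i \in W$, find its neighborhood $W_i$ inside $W$, and query \emph{all} pairs in $W_i$, hoping for a $K_b$ with $b = m-a-1$. The three-case formula for $c_m$ comes not merely from rounding $m/3$ but from optimizing the iteration parameter $\alpha$ subject to $\alpha \le 1$ and the balance between the Phase~2 probing cost and the Phase~3 fill cost $(p^{a+1}T)^2$. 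By contrast, your stated library size $L \approx p^{-k(m-k) - \binom{m-k}{2}}$ is, with $k \approx m/3$, of order $p^{-\Theta(m^2)}$, already vastly exceeding the claimed budget $p^{-2m/3 + c_m}$; and your sketch never says how a $K_{m-k}$ is located inside each common neighborhood or what that step costs. A minor related slip: the known lower bound from Proposition~\ref{prop:f-bound-clique} is $p^{-(2-\sqrt{2})m + O(1)}$, attained at $c \approx (1 - 1/\sqrt{2})m$, not $p^{-m/2}$ at $c = m/2$.

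Your lower-bound plan is essentially a restatement of the moment recursion already carried out in Lemma~\ref{lem:recursive-H} and Theorem~\ref{thm:t-bound}: bound copies of $K_m$ by copies of a $K_c$ core times choices for the remaining $m-c$ vertices times the probability of the remaining edges. That recursion is precisely what yields the $(2-\sqrt{2})m$ exponent and cannot by itself reach $2m/3$: Theorem~\ref{thm:t-bound-tight} exhibits graphs $H_k$ for which the bound of Theorem~\ref{thm:t-bound} is tight, so any improvement for cliques must exploit structure that distinguishes $K_m$ from such $H_k$. The ``martingale charging each new query against partial $K_m$-structures'' you describe is not a mechanism beyond what Lemma~\ref{lem:recursive-H} already encodes; closing the gap requires a genuinely new idea, which is why the paper records the statement as a conjecture.
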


The following result shows that the Subgraph Query Problem and the online random Ramsey game are closely related.

\begin{thm} \label{thm:query-online-connection}
For any $m,n\ge 3$ and $p\in(0,1)$,
\begin{equation*} \label{eq:query-online-connection}
\tilde{r}(m,n;p) \le \min\{f(K_m, p), f(K_n,1-p)\} \le 3\tilde{r}(m,n;p).
\end{equation*}
\end{thm}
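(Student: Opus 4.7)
The plan is to prove the two inequalities separately, exploiting the fact that against a random Painter who paints each edge red with probability $p$ independently, the red subgraph is distributed as $G(\mathbb{Z},p)$ and the blue subgraph is distributed as $G(\mathbb{Z},1-p)$, so the online random Ramsey game is effectively two coupled Subgraph Query Games. For the lower bound $\tilde{r}(m,n;p) \le \min\{f(K_m,p), f(K_n,1-p)\}$, I would argue by direct simulation: given any Builder strategy $\mathcal{T}$ for the Subgraph Query Game that finds $K_m$ in $G(\mathbb{Z},p)$ with probability at least $1/2$ in $f(K_m,p)$ queries, one can run $\mathcal{T}$ in the online Ramsey game, interpreting a red answer as ``edge present'' and a blue answer as ``edge absent.'' Since the random Painter's responses have exactly the joint distribution of $G(\mathbb{Z},p)$ on the queried pairs, $\mathcal{T}$ produces a red $K_m$ with probability at least $1/2$ in $f(K_m,p)$ turns, giving $\tilde{r}(m,n;p) \le f(K_m,p)$. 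Swapping the roles of the two colors and using parameter $1-p$ yields the analogous bound $\tilde{r}(m,n;p) \le f(K_n,1-p)$.

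For the upper bound $\min\{f(K_m,p), f(K_n,1-p)\} \le 3\tilde{r}(m,n;p)$, set $N = \tilde{r}(m,n;p)$ and fix an optimal Builder strategy $\mathcal{S}$ that, in $N$ moves against a random Painter, produces a red $K_m$ or blue $K_n$ with total probability at least $1/2$. Let $\alpha$ and $\beta$ denote the probabilities that $\mathcal{S}$ produces a red $K_m$ and a blue $K_n$, respectively; a union bound gives $\alpha + \beta \ge 1/2$, so at least one of them is at least $1/4$, say $\alpha \ge 1/4$. The key idea is probability amplification by independent repetition: run three independent copies of $\mathcal{S}$ on three pairwise disjoint vertex sets of the ambient $G(\mathbb{Z},p)$, each using at most $N$ queries and, by the same simulation as above, each producing a $K_m$ with probability at least $1/4$ independently. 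The probability that at least one copy succeeds is at least $1 - (3/4)^3 = 37/64 > 1/2$, so $f(K_m,p) \le 3N$, as required.

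There is no serious obstacle; the whole argument is a simulation plus a standard amplification step. The constant $3$ in the upper bound arises from the worst case $\alpha = 1/4$, which requires exactly three independent trials to amplify success probability past $1/2$; a sharper analysis of the joint distribution of $\alpha$ and $\beta$, or the design of strategies guaranteed to work in a fixed color class, would be needed to improve it.
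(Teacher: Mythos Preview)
Your proposal is correct and follows essentially the same approach as the paper: both directions use the same simulation idea (interpreting red/blue as success/failure in the Subgraph Query Game), and the right-hand inequality is obtained in both cases by observing that one of the two monochromatic outcomes must occur with probability at least $1/4$ and then amplifying via three independent repetitions on disjoint vertex sets, yielding $1-(3/4)^3=37/64>1/2$.
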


Using Theorem~\ref{thm:query-online-connection}, we can show that Conjecture~\ref{conj:query-2/3} implies both cases of Conjecture~\ref{conj:2/3}.
We can also determine an approximately optimal value for the probability parameter $p$ in the online Ramsey game with random Painter.

\begin{thm}\label{thm:opt-p}
For $m\ge 3$ fixed and $n\rightarrow\infty$, there exists a $p = \Theta(m/n\log(n/m))$ for which 
\[
\tilde{r}_{\text{\normalfont rand}}(m,n) \le 3 \tilde{r}(m,n; p).
\]
\end{thm}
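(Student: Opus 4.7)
The plan is to reduce the optimization over $p$ to locating the crossover of two explicit functions, and then to verify this crossover lies in the stated range. Write $h(p):=\min\{f(K_m,p),\, f(K_n,1-p)\}$. By Theorem~\ref{thm:query-online-connection}, for every $p$ we have $\frac{1}{3} h(p) \le \tilde{r}(m,n;p) \le h(p)$, and hence $\tilde{r}_{\text{rand}}(m,n) \le \max_p h(p)$. If I can choose $p^*$ that approximately maximizes $h$ while lying in $\Theta(m\log(n/m)/n)$, then $\tilde{r}(m,n;p^*) \ge h(p^*)/3 \ge \tilde{r}_{\text{rand}}(m,n)/3$, which is exactly the desired inequality.

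To locate the maximizer, I would use the fact that $f(K_m,p)$ is non-increasing in $p$ (via a monotone coupling of $G(\mathbb{Z},p)$ across values of $p$) while $f(K_n,1-p)$ is non-decreasing in $p$, so $h$ is maximized at (or near) their crossover. Next I would produce matching upper and lower bounds on the two functions. The upper bounds come from the trivial Builder strategy of querying all edges of an appropriately sized $K_N$, picking $N$ just large enough that a random graph with the relevant edge probability contains the target clique with probability at least $1/2$; the lower bounds come from a first-moment calculation that shows Builder cannot succeed until many queries are made. In the regime $p=o(1)$, the resulting estimates take the form $f(K_m,p) = p^{-\Theta(m)}$ (with $m$ fixed) and $f(K_n,1-p) = \exp(\Theta(pn))$. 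Setting these equal and taking logarithms yields the crossover equation $pn \asymp m\log(1/p)$, whose solution satisfies $p = \Theta(m\log(n/m)/n)$.

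The main obstacle is making the bounds on $f(K_m,p)$ and $f(K_n,1-p)$ precise enough (up to constants in the exponent) to confine the crossover to a window of constant relative width around $m\log(n/m)/n$. A secondary subtlety is that $f(K_m,\cdot)$ and $f(K_n,1-\cdot)$ are integer-valued step functions, so the ``crossover'' must be defined carefully, for instance as the infimum of $p$ with $f(K_m,p) \le f(K_n,1-p)$, and a short monotonicity argument is needed to verify that $h$ at this $p$ is within a constant factor of its supremum. Once these estimates are established, the conclusion follows mechanically from the sandwich in Theorem~\ref{thm:query-online-connection}, with the factor $3$ arising directly from that theorem.
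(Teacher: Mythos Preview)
Your proposal is correct and follows essentially the same route as the paper: reduce via Theorem~\ref{thm:query-online-connection} to maximizing $h(p)=\min\{f(K_m,p),\,f(K_n,1-p)\}$, use opposite monotonicity (the paper's Lemma~\ref{lem:f-monotone}) and integer-valuedness to argue the maximum is attained, and then pin down the maximizer by sandwiching both $f$'s; the paper does exactly this, using the first-moment lower bound (Proposition~\ref{prop:f-bound-clique} with $c=0$, which is your ``first-moment calculation'') and the branch-and-fill upper bound (Theorem~\ref{thm:detailed-upper-bound}). The only substantive difference is that you propose the trivial ``fill in a $K_N$'' upper bound instead of branch-and-fill, and there you will need a second-moment or Janson argument to ensure the prefactor in the upper bound for $f(K_n,1-p)$ is at most polynomial in $n$---this is precisely why the paper pauses to remark that the implicit constant in Theorem~\ref{thm:detailed-upper-bound} is polynomial in $m$.
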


We say that a graph has a \emph{$k$-matching} if it contains $k$ vertex-disjoint edges.
Our main result on the Subgraph Query Problem shows that graphs with large matchings are hard to build in few steps. We write $V(H)$ and $E(H)$ for the vertices and edges of $H$ and let $v(H) = |V(H)|$ and $e(H)=|E(H)|$.

\begin{thm}\label{thm:f-bound}
If $H$ is a graph that contains a $k$-matching, then
\[
f(H,p) = \Omega_H( p^{-(e(H)-k(k-1))/(v(H)-k)}).
\]
\end{thm}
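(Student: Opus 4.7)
The strategy is to bound the expected number $\mathbb{E}[Z]$ of labeled copies of $H$ that Builder builds within $N$ queries and apply Markov's inequality. The goal is to show
\[
\mathbb{E}[Z] \leq O_H\bigl(N^{v(H)-k}\,p^{e(H)-k(k-1)}\bigr),
\]
so that $\mathbb{E}[Z]<1/2$ whenever $N \le c_H \cdot p^{-(e(H)-k(k-1))/(v(H)-k)}$ for a sufficiently small constant $c_H$ depending only on $H$, which yields $f(H,p) = \Omega_H(p^{-(e(H)-k(k-1))/(v(H)-k)})$.

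Fix a $k$-matching $M = \{u_1 v_1, \ldots, u_k v_k\}$ of $H$ with an arbitrary orientation of its edges. For each labeled copy of $H$ that Builder has built, I parametrize it by the images of the $v(H) - k$ ``distinguished'' vertices: the $k$ head vertices $u_1, \ldots, u_k$ together with the $v(H) - 2k$ vertices outside the matching. Since Builder touches at most $2N$ vertices throughout the game, there are at most $(2N)^{v(H)-k}$ such tuples. For each fixed tuple I then sum over the remaining $k$ tail vertices $v_1, \ldots, v_k$ and bound the probability that the resulting assignment extends to a built copy of $H$. The key combinatorial lemma is that, summing over ordered tail tuples $(v_1, \ldots, v_k)$ such that the matching edges $u_i v_i$ and the $k(k-1)$ cross edges $u_i v_j$ (for $i \neq j$) all appear as successful queries in the random graph $S_N$ of successful queries, the expected count is $O_H((Np)^k)$ rather than the crude $(2N)^k p^{k^2}$; this effectively ``pools'' the $p^{k(k-1)}$ cost of the cross edges into the sum over tails. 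Combined with the probability $p^{e(H)-k^2}$ for the remaining $e(H)-k^2$ edges of $H$ to be present (each contributing an independent factor of $p$), this gives
\[
\mathbb{E}[Z] \leq O_H\bigl((2N)^{v(H)-k} \cdot (Np)^k \cdot p^{e(H)-k^2}\bigr) \cdot N^{-k} = O_H\bigl(N^{v(H)-k}\,p^{e(H)-k(k-1)}\bigr),
\]
as desired.

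The main obstacle is establishing the sharper combinatorial bound on the expected count of valid tail tuples: the naive union bound gives only $(2N)^k p^{k^2}$, and to save the factor $p^{-k(k-1)}$ one must exploit that, after conditioning on each head $u_i$, the number of successful neighbors of $u_i$ in $S_N$ is a binomial random variable $\mathrm{Bin}(N_{u_i}, p)$ with mean at most $Np$. A Cauchy--Schwarz or Jensen-type inequality on the product $\prod_i |\{v_i : u_i v_i \in S_N\}|$, together with a careful accounting of the cross edges $u_i v_j$, reduces the problem to controlling the expected $k$-th power of $|S_N|$, which is at most $O((Np)^k)$ by standard binomial concentration. Adaptivity of Builder's strategy is handled by the key observation that each query's outcome is an independent $\mathrm{Bernoulli}(p)$ random variable, independent of Builder's decision to query it, so that although the event ``edge $e$ is queried'' can be correlated with the outcomes of other edges, the global statistics of $S_N$ (its degree sequence, edge count, subgraph counts) obey the same concentration inequalities as a uniformly random $N$-edge subgraph of a fixed host.
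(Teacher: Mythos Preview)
Your proposal has a genuine gap; in fact, several.

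First, for a general graph $H$ containing a $k$-matching $\{u_iv_i\}$, there is no reason for the $k(k-1)$ cross pairs $u_iv_j$ ($i\neq j$) to be edges of $H$ at all. Your decomposition of $e(H)$ into ``$k^2$ head--tail edges plus $e(H)-k^2$ remaining edges'' therefore presupposes that $H$ contains a complete bipartite graph between $\{u_i\}$ and $\{v_j\}$, which is not part of the hypothesis. The exponent $e(H)-k(k-1)$ in the theorem does not arise from $k(k-1)$ cross edges being present in $H$; its origin is structural and quite different (see below). Second, even granting your lemma as stated, the final display reads
\[
\mathbb{E}[Z] \le O_H\bigl((2N)^{v(H)-k}\cdot (Np)^k\cdot p^{\,e(H)-k^2}\bigr)\cdot N^{-k},
\]
but the trailing factor $N^{-k}$ has no source: multiplying out the three bracketed terms gives $N^{v(H)}p^{\,e(H)-k(k-1)}$, which is off from the target by exactly $N^{k}$. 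Third, the ``crude'' bound $(2N)^kp^{k^2}$ you cite for the tail sum is already \emph{smaller} than your claimed improvement $(Np)^k=N^kp^k$ whenever $k\ge 2$ and $p<1$, so the ``key lemma'' as written is a weakening rather than a strengthening.

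The paper's argument proceeds by a different mechanism. It establishes two recursive inequalities for $t(H,p,N)$: one obtained by conditioning on which edge of a copy of $H$ is built last,
\[
t(H,p,N)\le p\sum_{e\in E(H)}t(H\setminus e,p,N),
\]
and one obtained by deleting both endpoints of a single edge,
\[
t(H,p,N)\le O(pN)\cdot t(H\setminus\{u,v\},p,N).
\]
The induction on $e(H)$ then splits into two cases. If every edge deletion preserves some $k$-matching, the first recursion suffices. Otherwise some edge $e$ lies in every $k$-matching of $H$, and the decisive structural observation is that such an $e$ can have at most $2(k-1)$ edges of $H$ incident to it (any further incident edge could replace $e$ in a $k$-matching, and $e$ cannot lie in a $4$-cycle with another matching edge for the same reason). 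Hence deleting the two endpoints of $e$ removes at most $2k-1$ edges while reducing $k$ by one, and applying the second recursion closes the induction. It is precisely this ``edge in every $k$-matching has few incident edges'' fact that produces the $k(k-1)$ in the exponent, and nothing analogous appears in your plan.
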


Together with the upper bound construction described in Section~\ref{sec:upper-bound-strat}, this is enough to settle the growth rate of $f(K_m, p)$ for $m\le 5$. In particular, it proves Conjecture~\ref{conj:query-2/3} for $m=4, 5$.

\begin{thm} \label{thm:nle5}
The asymptotic growth rates of $f(K_m, p)$ for $m=3,4,5$ are
\begin{align*}
f(K_{3},p) & = \Theta(p^{-3/2})\\
f(K_{4},p) & = \Theta(p^{-2})\\
f(K_{5},p) & = \Theta(p^{-8/3}).
\end{align*}
\end{thm}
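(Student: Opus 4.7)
The plan is to combine Theorem~\ref{thm:f-bound} (for the lower bounds) with the Builder strategy of Section~\ref{sec:upper-bound-strat} (for the upper bounds at $m=4,5$), and then to provide a short direct construction for the upper bound at $m=3$. Since the lower bound machinery and the general upper bound construction already appear elsewhere in the paper, the main work here is to verify that the exponents match for $m\le 5$.

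For the lower bounds, I would apply Theorem~\ref{thm:f-bound} to $H=K_m$ and maximize the resulting exponent
\[
\frac{\binom{m}{2}-k(k-1)}{m-k}
\]
over feasible matching sizes $1\le k\le\lfloor m/2\rfloor$. At $m=3$, only $k=1$ is available, giving exponent $3/2$. At $m=4$, both $k=1$ and $k=2$ give exponent $2$. At $m=5$, $k=2$ gives $8/3$, which beats the $5/2$ from $k=1$. These match the stated lower bounds. For the upper bounds at $m\in\{4,5\}$, I would invoke the Builder strategy from Section~\ref{sec:upper-bound-strat} that witnesses the $2^{o(m)}p^{-2m/3+c_m}$ bound of Conjecture~\ref{conj:query-2/3}. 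With $m$ fixed, the $2^{o(m)}$ factor is absorbed into the implied constant, and substituting $c_4=2/3$ (since $4\equiv1\pmod 3$) and $c_5=18/27=2/3$ (since $5\equiv2\pmod 3$) yields exponents $-8/3+2/3=-2$ and $-10/3+2/3=-8/3$, respectively.

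For the remaining $m=3$ upper bound, I would use a standard two-stage Builder strategy. From a fixed vertex $v$, first query $Cp^{-3/2}$ potential edges to distinct vertices; by Chebyshev's inequality the true neighborhood $S$ of $v$ satisfies $|S|\ge\tfrac{1}{2}Cp^{-1/2}$ with constant probability. Then query all $\binom{|S|}{2}=\Theta(p^{-1})$ potential edges inside $S$; each is present with probability $p$, so the expected number of edges found is $\Theta(1)$, and a second-moment argument produces at least one such edge with constant probability, completing a triangle together with $v$. The total number of queries is $O(p^{-3/2})$. There is no substantial obstacle: the heavy lifting is done by Theorem~\ref{thm:f-bound} and Section~\ref{sec:upper-bound-strat}, and the only mildly delicate step is the coupled concentration argument for the two-stage triangle construction, which is routine.
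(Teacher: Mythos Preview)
Your proposal is correct and matches the paper's proof almost exactly: the paper invokes Theorem~\ref{thm:f-bound} with $k=1$ for $m=3$ and $k=2$ for $m=4,5$ for the lower bounds, and refers to Section~\ref{sec:upper-bound-strat} (Theorem~\ref{thm:detailed-upper-bound}) for the upper bounds, including the same two-stage construction for the $m=3$ case. Your explicit verification of the exponents $c_4=c_5=2/3$ and the concentration remarks for the triangle construction are a bit more detailed than the paper's terse pointer, but the argument is the same.
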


Asymptotically, the optimal $k$ to pick in Theorem~\ref{thm:f-bound} for $G = K_m$ is $k = (1-1/\sqrt{2})m$. With this value, we get the following bound on $f(K_m, p)$ which corresponds to Corollaries~\ref{cor:diagonalonline} and~\ref{cor:offdiagonalonline} in the online Ramsey number setting.

\begin{cor} \label{cor:f-bound-asymp}
For all $m\ge 3$,
\[
f(K_m,p) = \Omega_m(p^{-(2-\sqrt{2})m+O(1)}).
\]
\end{cor}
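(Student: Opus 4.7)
The plan is to apply Theorem~\ref{thm:f-bound} directly to $H = K_m$ and optimize the resulting exponent over the matching size $k$. Since $K_m$ contains a $k$-matching for every integer $k \le m/2$, and since $e(K_m) = \binom{m}{2}$, $v(K_m) = m$, the theorem yields
\[
f(K_m, p) = \Omega_m\!\left(p^{-\frac{\binom{m}{2} - k(k-1)}{m-k}}\right)
\]
for any admissible $k$. Everything reduces to maximizing the rational function $\varphi(k) = (\binom{m}{2} - k(k-1))/(m-k)$ subject to $k \le m/2$.

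To find the optimizer, I will treat $k$ as a continuous parameter and differentiate. Setting $\varphi'(k) = 0$ leads (after clearing denominators) to the quadratic
\[
k^2 - 2mk + \tfrac{m^2+m}{2} = 0,
\]
whose smaller root is $k^\ast = m - \sqrt{(m^2-m)/2} = (1 - 1/\sqrt{2})\,m + O(1)$. Since $1 - 1/\sqrt{2} \approx 0.293 < 1/2$, the nearby integer $k = \lfloor (1 - 1/\sqrt{2})m \rfloor$ is a legal choice of matching size.

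I will then verify by direct substitution that this choice gives the claimed exponent. With $k = (1 - 1/\sqrt{2})m + O(1)$ one has $m - k = m/\sqrt{2} + O(1)$ and $k(k-1) = (3/2 - \sqrt{2})m^2 + O(m)$, so
\[
\binom{m}{2} - k(k-1) = (\sqrt{2} - 1)m^2 + O(m),
\]
and therefore
\[
\varphi(k) = \frac{(\sqrt{2}-1)m^2 + O(m)}{m/\sqrt{2} + O(1)} = \sqrt{2}\,(\sqrt{2}-1)\,m + O(1) = (2-\sqrt{2})m + O(1),
\]
which is precisely the exponent in the corollary.

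There is no real obstacle here: once Theorem~\ref{thm:f-bound} is in hand, the corollary is a short calculus exercise plus a rounding check. The only care required is tracking the $O(1)$ error introduced by rounding $k^\ast$ to an integer, which is absorbed into the $O(1)$ additive term in the exponent. The same optimization underlies the parameter choice $c \approx (1 - 1/\sqrt{2})n$ used for Corollary~\ref{cor:diagonalonline}, so no new ideas are needed.
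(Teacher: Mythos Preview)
Your proof is correct and follows exactly the approach the paper intends: the paper simply states that the optimal choice in Theorem~\ref{thm:f-bound} is $k = (1-1/\sqrt{2})m$ and asserts the corollary, while you supply the calculus and the rounding check that justify this choice. The extra detail you give (the quadratic for $k^\ast$ and the verification that $\varphi(k) = (2-\sqrt{2})m + O(1)$) is precisely what the paper leaves implicit.
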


In studying the function $f(H,p)$, we were naturally led to consider the following function. When $H$ is a graph with no isolated vertices, define $t(H,p,N)$ to be the maximum expected number of copies of $H$ that can be built in $N$ moves in the Subgraph Query Game with parameter $p$, the maximum taken over all possible Builder strategies. 

However, if $H$ has isolated vertices, the expected value is zero or infinite. Instead, if $H$ has exactly $k$ isolated vertices $v_1,\ldots,v_k$, we define 
\[
t(H,p,N) \coloneqq (2N)^{k}t(H\backslash \{v_1,\ldots,v_k\},p,N)
\]
to capture the fact that the game with $N$ turns involves at most $2N$ vertices and therefore might as well be played on $2N$ fixed vertices.

Studying the threshold value of $N$ for which $t(H,p,N) \ge 1$ leads to Theorem~\ref{thm:f-bound} above. Intuitively, we expect the best strategy for building a copy of $H$ to be the same as the one which expects to build a single copy of $H$ in as few turns as possible.

Another natural question about the function $t(H,p,N)$ is: if $N$ is very large, what is the maximum number of copies Builder can expect to build in the Subgraph Query Game? Here we show that for $N$ sufficiently large the strategy of taking $O(\sqrt{2N})$ vertices and building all pairs of edges between them is asymptotically optimal for maximising $t(K_m,p,N)$, even though it is decidedly suboptimal for trying to build a single copy of $K_m$.

\begin{thm}\label{thm:t-large-N}
For all $m\ge 2, p \in (0,1), \varepsilon>0$, there exists $C > 0$ such that if $N\ge Cp^{-(2m-1)}(\log (p^{-1}))^2$, then
\[
t(K_{m},p,N) = (1\pm \varepsilon)p^{\binom{m}{2}}(2N)^{\frac{m}{2}}.
\]
\end{thm}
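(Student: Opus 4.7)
The plan is to prove matching lower and upper bounds whose leading-order asymptotic is $(2N)^{m/2} p^{\binom{m}{2}}$.

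For the \emph{lower bound}, I would analyze the clique strategy: Builder picks $k = \lfloor (1+\sqrt{1+8N})/2 \rfloor$, so $\binom{k}{2} \le N$ and $k = \sqrt{2N}(1+o(1))$, fixes a set $V$ of $k$ vertices, queries every pair in $\binom{V}{2}$, and uses any remaining $N - \binom{k}{2}$ queries on disjoint fresh pairs. The induced subgraph on $V$ is distributed as $G(k,p)$, so by linearity the expected number of labeled copies of $K_m$ equals $k(k-1)\cdots(k-m+1)\, p^{\binom{m}{2}}$. For $k$ sufficiently large (which holds once $N$ exceeds the stated threshold), this is at least $(1-\varepsilon)(2N)^{m/2} p^{\binom{m}{2}}$.

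For the \emph{upper bound}, fix an arbitrary (possibly adaptive) Builder strategy, let $G^*$ denote the queried graph (always having exactly $N$ edges, since Builder never repeats queries), and let $G \subseteq G^*$ denote the built graph. The starting point is the Kruskal--Katona inequality applied to $G^*$: as a deterministic bound, the number of labeled copies of $K_m$ in any $N$-edge graph is at most $(2N)^{m/2}$, so $N_{K_m}(G^*) \le (2N)^{m/2}$. In the nonadaptive case, conditional on $G^*$ the outcomes $B_e$ for $e \in E(G^*)$ are i.i.d.\ Bernoulli($p$), so $E[N_{K_m}(G) \mid G^*] = p^{\binom{m}{2}} N_{K_m}(G^*)$ exactly, which immediately yields the target upper bound $E[N_{K_m}(G)] \le (2N)^{m/2} p^{\binom{m}{2}}$.

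The crux is the adaptive case, in which conditioning on $\{G^*=H\}$ can bias the distribution of the $B_e$'s on $E(H)$, since Builder's decision of which pair to query at each step depends on past outcomes. To control this, I would invoke the adaptive-independence principle that the query outcomes $B_{e_1}, \ldots, B_{e_N}$ form an i.i.d.\ Bernoulli($p$) sequence in the order they are revealed, and use it to derive the identity
\[
E[N_{K_m}(G)] = p^{\binom{m}{2}} \sum_{T} P\bigl(T \subseteq G^* \,\bigm|\, B_e = 1 \text{ for all } e \in \binom{T}{2}\bigr),
\]
where the sum runs over labeled $m$-tuples $T$ and each conditional probability is taken under the ``shifted measure'' in which the outcomes on the pairs of $T$ are all set to $1$. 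The main technical task is to show that this shifted sum is at most $(1+\varepsilon)(2N)^{m/2}$. The intuition is that each shift only fixes $\binom{m}{2}$ of the $N$ i.i.d.\ outcomes, and under the hypothesis $N \ge Cp^{-(2m-1)}(\log p^{-1})^2$, which forces $Np$ to be extremely large, a concentration-of-measure argument shows that the distribution of $G^*$ is insensitive in total variation to such a microscopic perturbation, so the expected number of labeled $K_m$-copies in $G^*$ is preserved up to a $(1+\varepsilon)$ factor. Establishing this quantitative stability of $G^*$ under the shift, in a manner uniform over $T$, is where the main effort lies and is the chief obstacle.
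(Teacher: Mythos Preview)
Your lower bound is correct and is exactly what the paper does. The upper bound, however, has a real gap at precisely the point you flag as ``the chief obstacle,'' and the route you sketch does not seem to close it.

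The identity $E[N_{K_m}(G)] = p^{\binom{m}{2}} \sum_{T} P_T(T \subseteq G^*)$ (with $P_T$ the law conditioned on $B_e=1$ for all $e\in\binom{T}{2}$) is fine, but total-variation stability of the law of $G^*$ is the wrong tool. First, the TV distance between $P$ and $P_T$ need not be small: if Builder's first query is $(1,2)$ and the entire remaining strategy branches on $B_{(1,2)}$, then conditioning on $B_{(1,2)}=1$ moves the law of $G^*$ by $1-p$ in TV. Second, and more seriously, even a statement of the form $E_{P_T}[N_{K_m}(G^*)] \le (1+\varepsilon)\, E[N_{K_m}(G^*)]$ for every $T$ would only bound $\sum_{T'} P_T(T' \subseteq G^*)$ for each fixed $T$, whereas what you actually need is the \emph{diagonal} sum $\sum_T P_T(T \subseteq G^*)$, in which the shift and the indicator are tied together. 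Kruskal--Katona bounds $N_{K_m}(H)$ for a single $N$-edge graph $H$, and there is no apparent mechanism by which it passes through this family of coupled measures.

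The paper's upper bound takes a completely different route and avoids $G^*$ altogether. It works directly with the built graph $G$, observing that $G$ is a subgraph of the ambient $G(2N,p)$, which is with high probability $(p,M,\varepsilon)$-jumbled for $M=O(p^{-1}\log N)$; hence $G$ is what the paper calls $(p,M,\varepsilon)$-\emph{subjumbled} and has $(1+o(1))pN$ edges, regardless of Builder's strategy. A degeneracy lemma shows that any such subjumbled graph is roughly $p\sqrt{2N}$-degenerate, and feeding this into a recursive clique-counting lemma yields $t(K_m,G)\le(1+o(1))\,p^{\binom{m}{2}}(2N)^{m/2}$ plus lower-order error terms that are dominated once $N$ exceeds the stated threshold. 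Adaptivity never enters the argument: the upper bound holds for \emph{every} graph with those structural parameters.
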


The rest of the paper is organized as follows.
In Section~\ref{sec:generallowerbounds}, we motivate and prove Theorem~\ref{thm:general-lower-bound}, our lower bound on the online Ramsey number, via the method of conditional expectations. In Section~\ref{sec:alterations}, we prove the lower bound Theorem~\ref{thm:alterations} for $\tilde{r}(3,n)$ using a Painter strategy designed to avoid red triangles.
We prove the upper bound Theorem~\ref{thm:online-upper-bound} in Section~\ref{sec:off-diagonal-upper}. Then, in Section~\ref{sec:subgraph-query}, we study the Subgraph Query Problem for its own sake, proving the upper bound in Conjecture~\ref{conj:query-2/3} as well as Theorems~\ref{thm:query-online-connection},~\ref{thm:opt-p},~\ref{thm:f-bound},~\ref{thm:nle5} and~\ref{thm:t-large-N}. We include a handful of open problems raised by our research in the closing remarks.

Unless otherwise indicated, all logarithms are base $e$. For clarity of presentation, we omit floor and ceiling signs when they are not crucial. We also do not attempt to optimize constant factors in the proofs.  

\section{General lower bounds}\label{sec:generallowerbounds}
\subsection{Motivation}

In this section, we prove Theorem~\ref{thm:general-lower-bound}
via a weighting argument, motivated by the method of conditional expectations
and a result of Alon \cite{Alon} on the maximum number of copies of a
given graph $H$ in a graph with a fixed number of edges.

The first idea, the derandomization technique known as the method
of conditional expectations (see Alon and Spencer \cite{AlSp}), can be
used to give the following ``deterministic'' proof of the classical lower
bound on diagonal Ramsey numbers. We will show that
\[
\binom{r(n, n)}{n}2^{-\binom{n}{2}+1}\ge1.
\]

Suppose that for some $N$,
\begin{equation}
\binom{N}{n}2^{-\binom{n}{2}+1}<1.\label{eq:totalweight}
\end{equation}
Paint the edges of $K_{N}$ one at a time as follows. To each vertex subset 
$U$ of order $n$, assign a weight $w(U)$ which is the
probability that $U$ becomes a monochromatic clique if the edges which remain uncolored at that time are colored uniformly at randomly. That is, writing $e(U)$ for the number of edges already colored in $U$,
\[
w(U)=\begin{cases}
2^{-\binom{n}{2}+1} & e(U)=0\\
2^{-\binom{n}{2}+e(U)} & \text{$e(U)>0$ and all already colored edges in $U$ are the same color}\\
0 & \text{otherwise}.
\end{cases}
\]
At every step, the total weight $\sum_{U}w(U)$
is equal to the expected number of monochromatic cliques if the remaining edges are painted uniformly at random. It is therefore possible to 
paint each edge so as not to increase the total weight. Since the condition 
$\sum_{U}w(U)<1$ is initially guaranteed by (\ref{eq:totalweight}), we can 
maintain this condition throughout the course of the game, ending with a coloring where there is no  
monochromatic clique of order $n$.

We now wish to apply such a weighting argument to the online
Ramsey game. The key observation is that if $\tilde{r}(n, n)$
is close to $r(n, n)$, then, since the graph built by Builder has at least 
$r(n, n)$ vertices, it must be extremely
sparse. In particular, most of the weight should be concentrated on sets $U$ 
almost none of whose edges are ever built.

This is where the idea behind Alon's result \cite{Alon} comes in. For any fixed graph $H$, that paper solves the problem of determining the maximum possible number of copies of $H$ in a graph with a prescribed number of edges. Roughly speaking, Alon showed that the maximum number of copies of $H$ can be controlled by the size of the maximum matching in $H$. We show that this heuristic also applies to the online Ramsey game, though it will be more convenient for our calculations to work with minimum vertex covers instead of maximum matchings. 

To make this idea work, instead of controlling the total weight function $\sum_{U}w(U)$, we restrict the sum to
subsets $U$ with a large minimum vertex cover, which
are comparatively few in number. Even if the total weight $\sum_{U}w(U)$
becomes large, the amount of weight supported on sets $U$ with a
large vertex cover is much smaller, and this is the only weight
that stands a chance to make it to the finish line and complete a 
monochromatic clique.

\subsection{The proof}

Using the weighting argument described informally above, we now prove a lower bound on the value of $\tilde{r}(m,n;p)$, where Painter plays randomly, independently coloring each edge red with probability $p$ and blue with probability $1-p$.

\begin{thm}\label{thm:random-lower-bound}
If, for some $m,n,N\ge1$ and $p\in(0,1)$, there exist $c\le\frac{1}{2}m$
and $d\le\frac{1}{2}n$ for which
\[
p^{\binom{m}{2}-c(c-1)}(2N)^{m-c}+(1-p)^{\binom{n}{2}-d(d-1)}(2N)^{n-d} \le \frac{1}{2},
\]
then $\tilde{r}(m,n;p)>N$.
\end{thm}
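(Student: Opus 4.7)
The plan is to bound $\Pr[\text{Builder wins within } N \text{ turns}]$ by the union $\Pr[\exists \text{ red } K_m] + \Pr[\exists \text{ blue } K_n]$ and show each summand is controlled by the corresponding piece of the hypothesis; then the assumption forces the sum to be at most $\tfrac12$, proving $\tilde r(m,n;p) > N$. The two cases are symmetric under $(m,c,p) \leftrightarrow (n,d,1{-}p)$, so it suffices to show $\Pr[\exists \text{ red } K_m] \le p^{\binom{m}{2} - c(c-1)}(2N)^{m-c}$.

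The backbone is a weighted first-moment bound resting on a strong Markov lemma: for any Builder strategy and any fixed edge set $S$ of the complete graph on the $2N$ potential vertices, $\Pr[\text{every edge of } S \text{ is built by time } N \text{ and colored red}] \le p^{|S|}$. This holds because the first time Builder plays each edge of $S$ is a predictable stopping time, so by iterated conditioning the Bernoulli$(p)$ color assigned at that moment is independent of the prior history, yielding a factor of $p$ per edge of $S$.

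Next I would parametrize potential red $K_m$'s by ordered $(m{-}c)$-tuples $\mathbf{a} = (a_1, \ldots, a_{m-c})$ of distinct vertices, viewed as a labeled initial segment of a would-be red $K_m$, with $B_1 := \{a_1, \ldots, a_c\}$ and $H := \{a_{c+1}, \ldots, a_{m-c}\}$ of size $m{-}2c$. Let $E_{\mathbf{a}}$ be the event that all edges within $H$ are red, all edges between $B_1$ and $H$ are red, and there exist $c$ distinct vertices outside $\mathbf{a}$ each of whose $m{-}c$ edges into $\mathbf{a}$ are red. Whenever a red $K_m$ is built on vertex set $U$, the event $E_{\mathbf{a}}$ is certified with witnesses $U \setminus \mathbf{a}$ for each of the $m!/c!$ orderings of an $(m{-}c)$-subset of $U$. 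For a fixed witness set the edge count is $\binom{m-2c}{2} + c(m-2c) + c(m-c) = \binom{m}{2} - c(c-1)$, and the strong Markov lemma together with a Markov bound on the number of valid witnesses gives $\Pr[E_{\mathbf{a}}] \le (\text{combinatorial factor}) \cdot p^{\binom{m}{2} - c(c-1)}$. Summing over the at most $(2N)^{m-c}$ tuples and dividing by the multiplicity $m!/c!$ then yields the target bound.

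The main obstacle is precisely extracting the sharp factor $(2N)^{m-c}$ rather than the cruder $\binom{2N}{m}$: a naive Markov step on the number of valid $c$-element witness sets pays a spurious $\binom{2N}{c}$ factor which, combined with the $(2N)^{m-c}$ tuples, would only yield $\binom{2N}{m} p^{\binom{m}{2} - c(c-1)}$, strictly weaker than the theorem whenever $(2N)^c \gg m!$. Eliminating that factor is the combinatorial heart of the argument and embodies the ``large minimum vertex cover'' idea flagged in the motivation: only those candidate $m$-sets whose red-colored induced subgraph already has a large vertex cover should contribute meaningfully to the sum, and achieving this cleanly will likely require choosing witnesses in a canonical order keyed to $\mathbf{a}$ or else recasting the count as a supermartingale whose weight is accumulated edge-by-edge rather than summed all at once.
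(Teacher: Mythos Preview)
Your overall plan---first-moment bound, symmetry between red and blue, and the target edge count $\binom{m}{2} - c(c-1)$---matches the paper, and you have correctly diagnosed the crux: a static union bound over $m$-sets (or over $(m{-}c)$-tuples plus $c$ witnesses) inevitably pays $(2N)^{m}$ rather than $(2N)^{m-c}$. But your proposal stops precisely where the content begins; the closing sentence is a statement of hope, not an argument.

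The paper fills the gap by the second of your two suggestions, executed as a recursion in the vertex-cover parameter rather than in the number of edges. Assign to each $m$-set $U$ the weight $w(U,t) = p^{\binom{m}{2} - e(G_t[U])}$ if $G_t[U]$ is all red and $0$ otherwise, so that $\mathbb{E}[w(U,t)]$ is constant in $t$ once $U$ enters the sum. Let $w_{k,c}(t)$ be the total weight over $k$-sets whose current minimum vertex cover is at least $c$. The key step is the recursion
\[
\mathbb{E}\,w_{k,c}(N) \le N\cdot p^{\,2k-2c-1}\,\mathbb{E}\,w_{k-2,\,c-1}(N).
\]
To see it, look at the first time $t$ the cover of $U$ reaches $c$: the edge $e_t$ just built must be vertex-disjoint from the old cover of $U' := U\setminus e_t$ (else the cover was already $c$), so of the $2(k-2)$ edges from $e_t$ into $U'$ at most $2(c-1)$ are yet present; this gives $w(U,t) \le p^{2k-2c-2}\,w(U',t)$, plus one more factor $p$ for the color of $e_t$. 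Crucially, for a fixed time $t$ each $U'$ extends to \emph{at most one} such $U$ (namely $U'\cup e_t$), so summing over $t\le N$ costs only a factor $N$, not $(2N)^2$. Iterating $c$ times and closing with the trivial $\mathbb{E}\,w_{m-2c,0}(N)\le (2N)^{m-2c}p^{\binom{m-2c}{2}}$ yields $N^{c}(2N)^{m-2c}\le(2N)^{m-c}$. Your static witness formulation cannot recover this saving, because there the $c$ extra vertices are drawn from the whole pool instead of being pinned down, two at a time, by the edges $e_t$ as they arrive.
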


We would like to show that regardless of Builder's strategy, the online random Ramsey game lasts for more than $N$ steps with probability at least $1/2$.

Suppose the game ends in at most $N$ turns and, without loss of generality, is played on $2N$ vertices. Let $G_{t}$, for $0\le t\le N$, be the state
of the graph after $t$ turns. Assign to each subset $U\subset V(G)$
an evolving weight function
\begin{align*}
w(U,t) & = \begin{cases}
p^{\binom{|U|}{2}-e(G_{t}[U])} & G_{t}[U]\text{ is monochromatic red}\\
0 & \text{otherwise.}
\end{cases}
\end{align*}

The value of $w(U,t)$ is the probability that $U$ becomes a red
clique if the remaining edges are built.

We say that $C \subset V(G)$ is a {\it vertex cover} of $G$ if every edge is incident to some vertex $v \in C$. If $U\subset V(G)$, let $c(U,t)$ be the size of the minimum vertex cover of $G_{t}[U]$. Note that $c(U,t)$ is a nondecreasing function of $t$. For each pair $(k, c)$ with $k \geq 2c$, we will be interested in the total weight supported on sets of order $k$ with $c(U,t) \geq c$,
\begin{align*}
w_{k,c}(t) & = \sum_{|U|=k,c(U,t)\ge c}w(U,t).
\end{align*}

Since $w(U,N)$ is nonnegative and $w(U,N)=1$ if and only if $U$ is a red clique, we see that for all $c \le m/2$, $w_{m,c}(N)$ is an upper bound for
the number of red copies of $K_m$ built after $N$ turns. We
would like to upper bound the expected value of $w_{m,c}(N)$. 

\begin{lem}\label{lem:weight}
With $w_{m,c}(t)$ as above, regardless of Builder's strategy,
\[
\mathbb{E}w_{m,c}(N)\le p^{\binom{m}{2}-c(c-1)}(2N)^{m-c}.
\]
\end{lem}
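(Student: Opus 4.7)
The main tool will be the observation that, for each fixed subset $U\subset V(G)$ of size $m$, the sequence $(w(U,t))_{t=0}^{N}$ is a nonnegative martingale with respect to the filtration generated by Painter's coin flips, with $w(U,0)=p^{\binom{m}{2}}$. Indeed, an edge built outside $U$ leaves $w(U,\cdot)$ unchanged, while an edge built inside $U$ either multiplies the weight by $1/p$ (with probability $p$, when Painter colors red) or zeroes it (with probability $1-p$), so $\mathbb{E}[w(U,t+1)\mid\mathcal{F}_t]=w(U,t)$. This yields $\mathbb{E}[w(U,N)]=p^{\binom{m}{2}}$ and hence the trivial bound $\mathbb{E}[w_{m,c}(N)]\le\binom{2N}{m}\,p^{\binom{m}{2}}$, which must be sharpened using the vertex-cover constraint.

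To incorporate $c(U,N)\ge c$, I would use a K\"onig-style greedy extraction: whenever $\tau(G_N[U])\ge c$, one can iteratively select distinct vertices $v_1,\ldots,v_c\in U$ and edges $e_1,\ldots,e_c\in G_N[U]$ with $v_i\in e_i$ and $e_i\cap\{v_1,\ldots,v_{i-1}\}=\emptyset$, giving an ordered ``cover witness'' $\sigma$ spanning between $c+1$ and $2c$ vertices. The union bound $\mathbf{1}_{c(U,N)\ge c}\le\sum_\sigma\mathbf{1}_\sigma$, combined with a product-form martingale identity of the shape $\mathbb{E}\bigl[p^{-|F\cap G_N|}\mathbf{1}_{\text{no built-blue edge in }F}\bigr]=1$ for any edge set $F\subset\binom{U}{2}$, then lets me evaluate $\mathbb{E}[w(U,N)\mathbf{1}_\sigma]$ as $p^{\binom{m}{2}}\,\mathbb{P}(\sigma\text{'s edges all built})$. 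Summing over witnesses $\sigma$ and over extensions $U\supset V(\sigma)$ reduces the claim to a pure combinatorial count weighted by these building probabilities.

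The chief obstacle is recovering the sharp exponent $c(c-1)$ in $p$ rather than the linear-in-$c$ exponent that a direct greedy-witness union bound would produce. To close this gap one must exploit more of the minimum-vertex-cover structure: intuitively, each of the $c$ cover vertices is essential, and they must collectively force roughly $c(c-1)$ constrained edge-incidences whose probability contribution gives the desired weight saving of $p^{c(c-1)}$. I expect this can be made rigorous by grouping witnesses by their vertex-support size $k\in[c+1,2c]$ and trading off the number of witness configurations against their per-witness extension counts $\binom{2N-k}{m-k}$, in the spirit of Alon's result \cite{Alon} on the maximum number of subgraph copies in a graph with a prescribed number of edges. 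Verifying this balanced count cleanly, while controlling the correlations introduced by Builder's adaptive strategy, is the heart of the argument.
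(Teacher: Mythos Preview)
Your martingale observation is correct and is also the starting point of the paper's proof, but the static witness approach you propose has two genuine gaps.

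First, your claimed identity $\mathbb{E}[w(U,N)\mathbf{1}_\sigma]=p^{\binom{m}{2}}\,\mathbb{P}(\sigma\text{'s edges all built})$ is false in general. Whether Builder queries the edges of $\sigma$ depends on earlier colours, so $\mathbf{1}_\sigma$ and $w(U,N)$ are correlated through Builder's adaptivity. For a concrete failure take $m=3$, $c=1$, $U=\{u,v,w\}$, $\sigma=\{(u,v)\}$, and let Builder first query $(u,w)$ and then query $(u,v),(v,w)$ only if $(u,w)$ came up red. Then $\mathbb{E}[w(U,N)\mathbf{1}_\sigma]=p^{3}$ while $p^{\binom{3}{2}}\mathbb{P}(\sigma\text{ built})=p^{3}\cdot p=p^{4}$; the inequality even goes the wrong way for an upper bound. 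The ``product-form martingale identity'' you invoke does not survive the fact that the set of queried edges is itself random.

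Second, and more fundamentally, you correctly identify that a direct union bound over cover witnesses loses the $c(c-1)$ exponent, but your proposed remedy (grouping by support size and balancing against extension counts) does not supply a mechanism for recovering it. The paper obtains the exponent by a different, temporal, argument. It tracks the \emph{critical time} $t_c(U)$ at which $c(U,t)$ first reaches $c$. At that moment the newly built edge $e_{t}$ cannot be covered by any $(c-1)$-vertex cover of $G_{t-1}[U]$, so its two endpoints lie outside such a cover and are therefore each adjacent to at most $c-1$ vertices of $U$; hence $w(U,t)\le p^{2k-2c-2}\,w(U',t)$ for $U'=U\setminus e_t$. Summing over the at most $N$ possible critical times and absorbing the probability $p$ that $e_t$ is red yields the clean recursion
\[
\mathbb{E}\,w_{k,c}(N)\;\le\;N\,p^{\,2k-2c-1}\,\mathbb{E}\,w_{k-2,\,c-1}(N),
\]
whose $c$-fold iteration produces exactly the saving $p^{-c(c-1)}$. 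This temporal peeling simultaneously handles the adaptivity (at each step there is only one edge in play, whose colour is a fresh coin flip) and delivers the quadratic exponent; neither is addressed in your sketch.
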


\begin{proof}
Each $U$ with the property $c(U,N)\ge c$ first achieves this property
at a time $t_{c}(U)$. We say that $U$ is $c$-critical at this time.
Write 
\[
w_{k,c}^{*}(t)=\sum_{|U|=k,t_{c}(U)=t}w(U,t)
\]
to be the contribution of the $c$-critical sets $U$ to $w_{k,c}(t)$.
Crucially, if we focus on the family of $U$ for which $t_{c}(U)=t$,
their expected total weight will remain $w_{k,c}^{*}(t)$ indefinitely.
Thus,
\[
\mathbb{E}w_{k,c}(N)=\sum_{t\le N}\mathbb{E}w_{k,c}^{*}(t).
\]

Now, a set $U$ which is $c$-critical at time $t$ must be
the vertex-disjoint union of the edge $e_{t}$ that Builder builds
at time $t$ and a set $U'$ of size $k-2$ with a vertex cover of order $c-1$. Also, because $U$ has a vertex cover of order $c-1$ before adding
this edge $e_{t}$, the edges incident to $e_{t}$ must also be incident
to one of the $c-1$ vertices in the vertex cover of $U'$, so $e_{t}$
is incident to a total of at most $2c-2$ edges in $U$. It follows
that after turn $t=t_{c}(U),$
\[
w(U,t)\le p^{2k-2c-2}w(U',t),
\]
where in particular if $U'$ is already not monochromatic then neither
is $U$. The exponent comes from the fact that among the total $2(k-2)$
edges between $e_{t}$ and $U'$ at least $2(k-2)-2(c-1)=2k-2c-2$
are thus far unbuilt and still contribute factors of $p$ to the weight of
$w(U,t)$. Thus, since each $U'$ completes at most one set $U$ which is $c$-critical at time $t$,
\[w_{k,c}^{*}(t) \le p^{2k-2c-2}w_{k-2,c-1}(t).\]

Further, note that there can only be $c$-critical sets at time $t$ if $e_t$ is colored red, which occurs with probability $p$. Otherwise, $w_{k,c}^{*}(t) = 0$. Taking expectations and using the fact that $\mathbb{E}w_{k,m}(t)$
is nondecreasing in $t$ gives
\begin{align*}
\mathbb{E}w_{k,c}^{*}(t) & \le p \cdot \mathbb{E}[p^{2k-2c-2}w_{k-2,c-1}(t)] \\
 & \le p^{2k-2c-1}\mathbb{E}w_{k-2,c-1}(N).
\end{align*}

Summing over all $t$,
\[\mathbb{E}w_{k,c}(N) \le N\cdot p^{2k-2c-1}\mathbb{E}w_{k-2,c-1}(N).\]

Iterating this last inequality, we conclude that
\begin{align*}
\mathbb{E}w_{m,c}(N) & \le N^{c}\cdot p^{2mc-3c^{2}}\mathbb{E}w_{m-2c,0}(N)\\
 & \le N^{c}\cdot p^{2mc-3c^{2}}\cdot (2N)^{m-2c}p^{\binom{m-2c}{2}}\\
 & \le p^{\binom{m}{2}-c(c-1)}(2N)^{m-c},
\end{align*}
as desired.
\end{proof}

The same analysis with the blue weight function 
\begin{align*}
w'(U,t) & = \begin{cases}
(1-p)^{\binom{|U|}{2}-e(G_{t}[U])} & G_{t}[U]\text{ is monochromatic blue}\\
0 & \text{otherwise}
\end{cases}
\end{align*}
leads to the conclusion that $\mathbb{E}w'_{n,d}(N)\le (1-p)^{\binom{n}{2}-d(d-1)}(2N)^{n-d}$ for all $n \geq 2d$. The assumption of Theorem~\ref{thm:random-lower-bound} then implies that the expected number of red $K_m$ plus the expected number of blue $K_n$ is at most $1/2$. This implies that the probability of containing either is at most $1/2$, completing the proof of Theorem~\ref{thm:random-lower-bound}. Theorem~\ref{thm:general-lower-bound} follows as an immediate corollary. 

\section{Lower bound via alterations}
\label{sec:alterations}

In this section, we improve the lower bound for the off-diagonal online Ramsey numbers $\tilde{r}(3,n)$ using a different Painter strategy. Our proof extends an alteration argument of Erd\H os \cite{Er} which shows that
\[
r(3,n) \ge \frac{cn^2}{\log^2 n},
\]
for some constant $c>0$. The main idea of Erd\H os' proof was to show that in a random graph $G(r,p)$ with $p\approx r^{-1/2}$, only a small fraction of the edges need to be removed to destroy all triangles. Moreover, with high probability, removing these edges doesn't significantly affect the graph's independence number.

Our proof involves a randomized strategy which pays particular attention to avoiding red triangles. Instead of painting entirely randomly, Painter's strategy is modified in two ways to avoid creating red triangles. First, if an edge is built incident to a vertex of degree less than $(n-1)/4$, Painter always paints it blue. Second, if painting an edge red would create a red triangle, Painter again always paints it blue. In all other cases, Painter paints edges red with probability $p$ and blue with probability $1-p$.

In order to show that this Painter strategy works, we first prove a structural result about Erd\H{o}s--R\'enyi random graphs. Roughly speaking, this lemma implies that if an edge is removed from each triangle in $G(r,p)$, the remaining graph still has small independence number.

\begin{lem} \label{lem:rand-struct}
Suppose $n$ is sufficiently large, $p = 20 \log n / n$, $r = 10^{-6} n^2 /(\log n)^2$
and $G \sim G(r,p)$ is an Erd\H{o}s--R\'{e}nyi random graph. Then, with high probability, there does not exist a set $S\subset V(G)$ of order $n$ such that more than $\frac{n^2}{10}$ pairs of vertices in $S$ have a common neighbor outside $S$.
\begin{proof}
Let $E_1$ be the event that the maximum degree of $G$ is at most $2rp$. For a given vertex subset $S$ of order $n$, let $E_1(S)$ be the event that every vertex outside $S$ has at most $2rp$ neighbors in $S$. Thus, $E_1$ implies $E_1(S)$ for all $S$.

For a set $S$ of size $n$, let $E_2(S)$ be the event that at most $\frac{n^2}{10}$ pairs of vertices in $S$ have a common neighbor outside $S$ and let $E_2$ be the event that $E_2(S)$ holds for all $S$. We will show $E_1 \wedge E_2$ occurs w.h.p. which in turn implies that $E_2$ itself occurs w.h.p.

The distribution of $\deg(v)$ for a single vertex $v\in G$ is the binomial distribution $B(r-1,p)$. Using the Chernoff bound (see, e.g., Appendix A in \cite{AlSp}), we find that
\[
\Pr[\deg(v) > 2rp] < \left(\frac{e}{4}\right)^{rp} < \exp\left(-\frac{n}{5\cdot 10^5\log n}\right).
\]

Taking the union over all vertices of $G$,  it follows that
\[
\Pr[\overline{E_1}] < r\exp\left(-\frac{n}{5\cdot 10^5\log n}\right),
\]
so $E_1$ occurs w.h.p.

Fix a set $S$ of $n$ vertices. For $v \in V(G) \backslash S$, define $\deg_S(v)$ to be the number of neighbors of $v$ in $S$. Since $E_1$ implies $E_1(S)$, we have
\[
\Pr[E_1 \wedge \overline{E_2(S)}] \le \Pr[E_1(S) \wedge \overline{E_2(S)}].
\]
We will show that this last probability is so small that we may union bound over all $S$.

For $E_1(S)$ to occur, the possible values of $\deg_S(v)$ range through $[0, 2rp]$. We will cut off the bottom of this range and divide the rest into dyadic intervals. Let $D_0 = -1, D_1 = 4enp, D_2 = 8enp, D_3 = 16enp, \ldots, D_k = 2rp$ so that $D_i = 2D_{i-1}$ for each $2\le i \le k-1$ and $D_k \le 2D_{k-1}$. The number of intervals $k$ satisfies $k\le \log_2(r/n) \le 2\log n$. 

Define $d_i$ to be the number of $v \in V(G) \backslash S$ satisfying $D_{i-1} < \deg_{S}(v) \le D_i$. For $\overline{E_2 (S)}$ to occur, it must be the case that
\[
\sum_{v\not\in S} \binom{\deg_S(v)}{2} \ge \frac{n^2}{10},
\]
as the left hand side counts each pair in $S$ with a common neighbor outside $S$ at least once. In particular,
\begin{equation}\label{eq:degree-prof}
\sum_{i=1}^{k} d_i \binom{D_i}{2} \ge \frac{n^2}{10}.
\end{equation}

Notice that since $D_1 = 4enp = 80e\log n$ and $d_1 \le r$,
\[
d_1 \binom{D_1}{2} \le r \cdot D_1 ^2 = \frac{64e^2}{10^4} n^2 < \frac{n^2}{20},
\]
so at least half the contribution of (\ref{eq:degree-prof}) must come from $i\ge 2$. Thus,
\begin{equation}\label{eq:degree-prof-2}
\sum_{i=2}^{k} d_i \binom{D_i}{2} \ge \frac{n^2}{20}.
\end{equation}

We would like to bound the probability that $E_1(S)$ and (\ref{eq:degree-prof-2}) occur simultaneously. Let $T$ be the family of all sequences $(d_i)_{i=1}^k$ which sum to $r-n$ and satisfy (\ref{eq:degree-prof-2}). Given the choice of $(d_i)_{i=1}^k$, the number of ways to assign vertices to dyadic intervals $(D_{i-1},D_i]$ is at most $\binom{r-n}{d_1,d_2,\ldots, d_k}$. 

If $i\ge 2$ and a vertex $v$ is assigned to $(D_{i-1},D_i]$, the probability that $\deg_S(v)$ lies in that interval is at most
\[
\Pr[\deg_S(v) > D_{i-1}]\le \binom{n}{D_{i-1}} p^{D_{i-1}} \le \left(\frac{enp}{D_{i-1}}\right)^{D_{i-1}}.
\]
If $i = 1$, then we simply use the trivial bound $\Pr[\deg_S(v) \in (D_0, D_1]] \le 1$. Thus,
\begin{align*}
\Pr[E_1(S) \wedge \overline{E_2(S)}] &\le \sum_{(d_i)\in T} \binom{r-n}{d_1,d_2,\ldots, d_k} \prod_{i=1}^{k} \Pr[\deg_S(v) \in (D_{i-1}, D_i]] \\
& \le \sum_{(d_i)\in T} \binom{r-n}{d_1,d_2,\ldots, d_k} \prod_{i=2}^{k} \left( \left(\frac{enp}{D_{i-1}}\right)^{D_{i-1}}\right)^{d_i} \\
& \le \sum_{(d_i)\in T} \prod_{i=2}^{k} \left(r \cdot \left(\frac{enp}{D_{i-1}}\right)^{D_{i-1}}\right)^{d_i},
\end{align*}
where we used $\binom{r-n}{d_1, d_2,\ldots, d_k} < r^{d_2 + \cdots + d_k}$. Next, the number of compositions of $r-n$ into $k$ parts is at most $r^k$, so $|T| \le r^k$ and we have
\begin{align}
\Pr[E_1(S) \wedge \overline{E_2(S)}] & \le r^k \max_{(d_i)\in T} \prod_{i=2}^{k} \left(r \cdot \left(\frac{enp}{D_{i-1}}\right)^{D_{i-1}}\right)^{d_i} \nonumber \\
& \le r^k \max_{(d_i)\in T} \exp\left(\sum_{i=2}^k d_i \log A_i\right), \label{eq:deg-opt}
\end{align}
where
\[
A_i = r \cdot \left(\frac{enp}{D_{i-1}}\right)^{D_{i-1}}.
\]

It remains to maximize the exponent in (\ref{eq:deg-opt}) subject to (\ref{eq:degree-prof-2}). Consider the function
\[
f(D) = \frac{1}{D^2}\log \left(r \cdot \left(\frac{enp}{D}\right)^{D}\right) = \frac{\log r}{D^2} + \frac{\log(enp)}{D} - \frac{\log D}{D}.
\]
Notice that $D_1 = 4enp = 80e\log n$ so that for $D\ge D_1$,
\[
r \cdot \left(\frac{enp}{D}\right)^{D} \le 
r \cdot \left(\frac{enp}{D_1}\right)^{D_1}\le r\cdot 2^{-80e\log n} < 1.
\]
Thus, $f(D)$ takes negative values on $[D_1, D_k]$. Its derivative is
\[
f'(D) = - \frac{2\log r}{D^3} - \frac{\log{enp}}{D^2} + \frac{\log D}{D^2} - \frac{1}{D^2} = \frac{D(\log D - \log(e^2np)) - 2\log r}{D^3}.
\]
Since $r \le n^2$, we find that whenever $D \ge D_1 = 4enp = 80e\log n$,
\[
f'(D) \ge \frac{D\log(4/e) - 2\log r}{D^3} \ge \frac{80e\log(4/e)\cdot \log n - 4\log n}{D^3} > 0,
\]
and so $f(D)$ is monotonically increasing on $[D_1, D_k]$ and attains its maximum value at $D_k = 2rp$. With $2rp = 4\cdot 10^{-5} n/\log n$ and $n$ sufficiently large, observe that
\[
\left(\frac{enp}{2rp}\right)^{2rp} = \left(\frac{10^6e(\log n)^2}{2n}\right)^{4\cdot 10^{-5} n/\log n} \le \exp(-2\cdot 10^{-5} n),
\]
so that this maximum value is
\[
f(2rp) \le \frac{10^{10}(\log n)^2}{16 n^2} \cdot \log (n^2 \cdot \exp(-2\cdot 10^{-5} n)) \le - \frac{10^5(\log n)^2}{16 n}.
\] 

In particular, because $\binom{D}{2} \ge D^2/3$ for $D \ge 3$ and $f(D)$ is always negative,
\begin{align*}
\sum_{i=2}^k d_i \log A_i & = \sum_{i=2}^k d_i \binom{D_i}{2}\cdot \frac{\log A_i}{\binom{D_i}{2}} \\
& \le 3 \sum_{i=2}^k d_i \binom{D_i}{2}\cdot f(D_i) \\
& \le 3 f(D_k) \sum_{i=2}^k d_i \binom{D_i}{2} \\
& \le 3 f(2rp) \cdot  \frac{n^2}{20} \\
& \le - n (\log n)^2
\end{align*}
for any $(d_i)\in T$.

Returning to (\ref{eq:deg-opt}), it follows that
\[
\Pr[E_1(S) \wedge \overline{E_2(S)}] \le r^k \max_{(d_i)\in T} \exp\left(\sum_{i=2}^k d_i \log A_i\right) \le r^k \exp(-n (\log n)^2).
\]
There are at most $\binom{r}{n} \le e^{2n\log n}$ subsets $S$ of size $n$ to consider and $r^k \le r^n \le e^{2n \log n}$ as well, so
\begin{align*}
\Pr[\overline{E_1 \wedge E_2}] & = \Pr[\overline{E_1} \vee  \bigvee_S E_2(S)] \\
& \le \Pr[\overline{E_1}] + \sum_S \Pr[E_1 \wedge \overline{E_2(S)}] \\
& \le \Pr[\overline{E_1}] + \sum_S \Pr[E_1(S) \wedge \overline{E_2(S)}] \\
& \le \Pr[\overline{E_1}] + \exp(4n\log n) \cdot \exp\left(-n(\log n)^2\right). \\
\end{align*}
Both summands on the right vanish rapidly, so $E_2$ holds w.h.p., as desired.
\end{proof}
\end{lem}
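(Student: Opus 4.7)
The approach is to fix a set $S\subset V(G)$ of size $n$, bound the probability that $S$ has more than $n^2/10$ common-neighbor pairs through $V(G)\setminus S$, and then union bound over the $\binom{r}{n}\le \exp(O(n\log n))$ choices of $S$. The common-neighbor pair count is at most the cherry count $\sum_{v\notin S}\binom{\deg_S(v)}{2}$, where $\deg_S(v)\sim B(n,p)$ has mean $np=20\log n$. The expected cherry count is therefore $O(r(np)^2)=O(n^2)$ with a tiny constant, well below $n^2/10$, so the real issue is producing a tail bound strong enough to beat the huge union bound. A naive Chernoff bound on the cherry sum is hopeless, because a single vertex with $\deg_S(v)$ as large as the maximum $\Theta(rp)=\Theta(n/\log n)$ already contributes $\Theta(n^2/\log^2 n)$, so the tail is driven by rare high-degree vertices rather than typical concentration.

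The main idea is to decompose the degree profile dyadically. After conditioning on the w.h.p.\ event $\Delta(G)\le 2rp$ (standard Chernoff), for each fixed $S$ I would partition $V(G)\setminus S$ into level sets $L_i=\{v:\deg_S(v)\in(D_{i-1},D_i]\}$ with $D_1\asymp enp$, $D_i=2D_{i-1}$, up to $D_k\asymp 2rp$, so that the cherry count is at most $\sum_i|L_i|\binom{D_i}{2}$. The bottom level $L_1$ contributes only $O(rD_1^2)=O(n^2)$ with a small enough constant to sit below $n^2/20$ deterministically, so it suffices to bound the probability that $\sum_{i\ge 2}|L_i|\binom{D_i}{2}\ge n^2/20$. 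The binomial tail gives $\Pr[\deg_S(v)>D_{i-1}]\le (enp/D_{i-1})^{D_{i-1}}$, and after absorbing the multinomial $\binom{r-n}{d_2,\dots,d_k}$ into $r^{d_2+\cdots+d_k}$, the probability that a given profile $(d_i)_{i\ge 2}$ is realized is at most $\prod_i A_i^{d_i}$ with $A_i=r(enp/D_{i-1})^{D_{i-1}}$. Since there are only $\exp(O(n\log\log n))$ relevant profiles, the task reduces to maximizing $\sum_{i\ge 2}d_i\log A_i$ subject to $\sum_{i\ge 2}d_i\binom{D_i}{2}\ge n^2/20$.

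The hard part, and the core of the argument, is this constrained optimization. I would set $f(D)=D^{-2}\log\bigl(r(enp/D)^D\bigr)$; using $\binom{D_i}{2}\ge D_i^2/3$, the maximum of the objective is at most $3\cdot(n^2/20)\cdot\max_{2\le i\le k}f(D_i)$, and it suffices to show this is $-\Omega(n(\log n)^2)$. A derivative computation should give $f'(D)>0$ on $[D_1,2rp]$ (the sign reduces to $D\log(D/(e^2np))-2\log r$, which is positive once $D\ge D_1=4enp$ thanks to $r\le n^2$), so $\max_i f(D_i)=f(D_k)$. Plugging in the parameters, $(enp/2rp)^{2rp}$ is $\exp(-\Theta(n))$, which forces $f(2rp)=-\Theta((\log n)^2/n)$. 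Combining these yields $\Pr[\overline{E_2(S)}\wedge E_1]\le \exp(-\Omega(n(\log n)^2))$ for each fixed $S$, comfortably absorbing the $\binom{r}{n}\le\exp(2n\log n)$ union bound. I expect the most delicate step to be verifying monotonicity of $f$ and extracting a sufficiently negative explicit bound on $f(D_k)$; in particular, the constants $p=20\log n/n$ and $r=10^{-6}n^2/\log^2 n$ seem to have been chosen precisely so that the low-degree level $L_1$ contributes strictly less than half of $n^2/10$, which is the deterministic slack that makes the whole argument close.
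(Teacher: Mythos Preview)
Your proposal is correct and matches the paper's proof essentially step for step: the same conditioning on $\Delta(G)\le 2rp$, the same dyadic decomposition starting at $D_1=4enp$, the same reduction to $\sum_{i\ge 2}d_i\binom{D_i}{2}\ge n^2/20$ after discarding the bottom level, the same $A_i=r(enp/D_{i-1})^{D_{i-1}}$, the same auxiliary function $f(D)=D^{-2}\log(r(enp/D)^D)$, the same monotonicity computation showing $f$ is maximized at $D_k=2rp$, and the same final union bound. The only discrepancy is your profile count $\exp(O(n\log\log n))$; the paper uses the sharper $|T|\le r^k\le \exp(O((\log n)^2))$, but either bound is swallowed by the $\exp(-\Omega(n(\log n)^2))$ term, so this is immaterial.
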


With this lemma in hand, we are now ready to prove Theorem~\ref{thm:alterations}.

\vspace{3mm}
\noindent
{\it Proof of Theorem~\ref{thm:alterations}.}
Let $p = 20 \log n / n$, $r = 10^{-6} n^2/(\log n)^2$ and $N=\frac{(n-1)r}{8}$.

We will give a randomized strategy for Painter such that, regardless of Builder's strategy, after $N$ edges are colored there is neither a red $K_3$ nor a blue $K_n$ w.h.p. Thus, there exists a strategy for Painter which makes the game last more than $N$ steps and the desired bound $\tilde{r}(3,n)>N$ follows. Note that proving the result with positive probability suffices, but our argument shows it w.h.p.~for no additional cost.

We now describe Painter's strategy. Initially, all vertices are considered inactive; a vertex is activated when its degree reaches at least $(n-1)/4$. The active vertices are labeled with the natural numbers in $[r]$ when they reach degree at least $(n-1)/4$, using an arbitrary underlying order on the vertices to break ties. Since $N = (n-1)r/8$, there will never be more than $r$ active vertices.

When Builder builds an edge $(u,v)$, this edge is considered inactive if either $u$ or $v$ is inactive immediately after $(u,v)$ is built and active otherwise. The status of an edge remains fixed once it is built, so that inactive edges remain inactive even if both of its incident vertices are active at a later turn. Painter automatically colors inactive edges blue.

If Builder builds an active edge $(u,v)$, Painter first checks if $u$ and $v$ have a common neighbor $w$ such that $(u,w)$ and $(v,w)$ are both red. For brevity's sake, we call such a vertex $w$ a {\it red common neighbor} of $u$ and $v$. If so, Painter paints $(u,v)$ blue so as to not build a red triangle and we call such an edge {\it altered}. Otherwise, Painter paints it red with probability $p$ and blue with probability $1-p$. Following this strategy, Painter guarantees that no red triangles are built. It suffices to show that w.h.p.~no blue $K_n$ is built either.

Here is an equivalent formulation of Painter's strategy. At the start of the game, Painter samples an Erd\H{o}s--R\'enyi graph $G = G([r],p)$ on the labels which he keeps hidden from Builder. Inactive edges are painted blue. When an active edge between vertices labelled $i$ and $j$ is built, it is painted red if and only if $i\sim j$ in $G$ and these two vertices currently have no red common neighbor.

Now, we apply Lemma~\ref{lem:rand-struct} to the graph $G$. Letting $E_2(S)$ be the event that an $n$-set $S$ has at most $n^2/10$ pairs with outside common neighbors and $E_2 = \bigwedge_S E_2(S)$, we see that $\Pr[\overline{E_2}] \rightarrow 0$ as $n\rightarrow \infty$.

For a set $S\subseteq [r]$ of labels, write $T(S)$ for the set of active vertices with labels in $S$.  We seek to bound the probability of the event $B(T(S))$ that $T(S)$ is a blue $n$-clique at the end of the game. Because any blue $n$-clique would have all of its vertices active (as each vertex of the $n$-clique would have degree at least $n-1\ge (n-1)/4$), if none of the events $B(T(S))$ occurs, then no blue $K_n$ is ever built. Once we show that the probability of a single $B(T(S))$ is sufficiently small, we will apply the union bound over all $S$ to show that w.h.p.~no blue $K_n$ is built.

First, note that if any edge $(u,v)$ in $T(S)$ is altered (and hence blue), we may assume that their red common neighbors lie outside $T(S)$. Otherwise, there must be two red edges inside $T(S)$ already and $T(S)$ can never become a blue $n$-clique.

With this in mind, conditioning on the event $E_2(S)$, at most $n^2/10$ altered blue edges are built in $T(S)$. Within $T(S)$ there can be at most $n^2/4$ inactive edges. Assuming $B(T(S))$ occurs, there are at least
\[
\binom{n}{2} - \frac{n^2}{4} - \frac{n^2}{10} \ge \frac{n^2}{8}
\]
edges between vertices of $T(S)$ that are both active and unaltered. For $B(T(S))$ to occur, each of these active and unaltered edges would have to be colored blue on its turn. On the other hand, each of these edges has a chance $p$ of being colored red on that turn.

Thus, we find that
\[
\Pr[B(T(S))|E_2(S)] \le (1-p)^{\frac{n^2}{8}},
\]
with one factor of $1-p$ for each unaltered active edge built in $T(S)$. Thus,
\[
\Pr[\bigvee_S B(T(S))] \le \Pr[E_2 \wedge \bigvee_S B(T(S))] + \Pr[\overline{E_2}].
\]
The second summand goes to zero, so it suffices to show the first does as well. We have
\begin{align*}
\Pr[E_2 \wedge \bigvee_S B(T(S))] & \le \sum_S \Pr[E_2 \wedge B(T(S))] \\
& \le \sum_S \Pr[E_2(S) \wedge B(T(S))] \\
& \le \sum_S \Pr[B(T(S))|E_2(S)] \\
& \le \binom{r}{n} (1-p)^{\frac{n^2}{8}}.
\end{align*}
Using $1-p \le e^{-p}$, the right-hand side is at most
\[
r^n e^{-pn^2/8} \le e^{n\log r -pn^2/8} = e^{-(\frac{1}{2} + o(1))n\log n},
\]
also tending to zero as $n\rightarrow \infty$. Thus, the probability that either $\overline{E_2}$ or some $B(T(S))$ occurs tends to zero. Therefore, with high probability no blue $K_n$ is built.
\hfill \qed

\section{Off-diagonal upper bounds} \label{sec:off-diagonal-upper}

In Section~\ref{sec:generallowerbounds}, we proved lower bounds of the form $\tilde{r}(m,n)\ge\Omega(n^{(2-\sqrt{2})m+o(m)})$ on the off-diagonal online Ramsey numbers through an analysis of the online random 
Ramsey number. It is easy to give an upper
bound of the form $\tilde{r}(m,n) \leq O(n^{2m-2})$ simply by applying
the Erd\H os--Szekeres bound for classical Ramsey numbers and the
trivial observation that $\tilde{r}(m,n)\le\binom{r(m,n)}{2}$.

However, the simple inductive proof of the Erd\H os--Szekeres bound suggests a 
Builder strategy that does considerably better. Namely, build many edges from one 
vertex until it has a large number of edges of one color, then proceed 
inductively in that neighborhood.
This strategy is particularly well suited to the online Ramsey game because
the number of edges built is only slightly more than linear in the
number of vertices used, allowing us to derive a bound of the form $\tilde{r}(m,n)\le O(n^{m})$. 

A slight variation on this argument allows us to bound the online Ramsey number in terms of the bounds for classical Ramsey numbers.

\begin{lem} \label{lem:classical-to-online-upper-bound}
Let $m \leq n$ be positive integers with $m$ fixed. Let $m_0=\lfloor m/2 \rfloor + 1$ and $n_0=\lfloor \sqrt{n} \rfloor$. Suppose $\mathcal{L}$ is a positive real such that for all $m_0 \le m' \le m$ and $n_0 \le n' \le n$,
\begin{align*}
r(m_0, n') & \le \frac{1}{\mathcal{L}}\binom{m_{0}+n'-2}{m_{0}-1}, \\
r(m',n_0) & \le \frac{1}{\mathcal{L}}\binom{m'+n_{0}-2}{m'-1}.
\end{align*}

Then 
\[
\tilde{r}(m,n)\le \frac{C_m n}{\mathcal{L}}\binom{m+n-2}{m-1}
\]
for a constant $C_m$ depending only on $m$.
\end{lem}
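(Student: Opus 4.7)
The plan is to first convert the hypotheses into a uniform upper bound on $r(m',n')$ for every relevant $(m',n')$, and then run a natural online Erd\H{o}s--Szekeres strategy that pays at most one Ramsey-number worth of edges per pivot over a recursion of depth $O_m(n)$. Write $R(m',n'):=\frac{1}{\mathcal{L}}\binom{m'+n'-2}{m'-1}$ and note the Pascal identity
\[
R(m'-1,n')+R(m',n'-1)=R(m',n').
\]
By induction on $m'+n'$ I would first show that $r(m',n')\le R(m',n')$ for all $m_0\le m'\le m$ and $n_0\le n'\le n$: the boundary cases $m'=m_0$ and $n'=n_0$ are exactly the hypothesis, and the inductive step combines $r(m',n')\le r(m'-1,n')+r(m',n'-1)$ with the Pascal identity above.

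Next, I would describe Builder's recursive strategy on states $(m',n',V)$, maintaining the invariant $|V|\ge R(m',n')$; the initial call is $(m,n,\mathbb{N})$. If $m'\le m_0$ or $n'\le n_0$, Builder picks a subset of $V$ of size $r(m',n')\le R(m',n')$ and builds every edge inside it, using $\binom{r(m',n')}{2}$ edges and finishing by classical Ramsey. Otherwise, Builder picks a vertex $v\in V$ and builds edges from $v$ to distinct other vertices of $V$; by pigeonhole, after at most $R(m',n')-1$ edges, $v$ has either a red neighborhood $V_R\subseteq V$ of size $R(m'-1,n')$ or a blue neighborhood $V_B\subseteq V$ of size $R(m',n'-1)$. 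Builder recurses on $(m'-1,n',V_R)$ or $(m',n'-1,V_B)$; the Pascal identity is exactly what ensures the new state satisfies the invariant.

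For the cost, any realized recursion path has length at most $(m-m_0)+(n-n_0)=O_m(n)$, and by monotonicity of $R$ every pivot uses fewer than $R(m,n)=\frac{1}{\mathcal{L}}\binom{m+n-2}{m-1}$ edges, so the pivot contribution is $O_m\!\left(\frac{n}{\mathcal{L}}\binom{m+n-2}{m-1}\right)$. The single terminal node contributes at most $\binom{R(m',n')}{2}$, which splits into two cases: for $m'=m_0$ this is $O_m(n^{2m_0-2}/\mathcal{L}^2)$, absorbed by the pivot cost since $2m_0-2\le m$; for $n'=n_0$ it is $O_m(n^{m-1}/\mathcal{L}^2)$, which is even smaller. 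Summing gives $\tilde{r}(m,n)\le \frac{C_m n}{\mathcal{L}}\binom{m+n-2}{m-1}$ for some $C_m$ depending only on $m$.

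The main obstacle is the vertex-count bookkeeping. Using the upper bound $R$, rather than the true Ramsey number, both as the pigeonhole threshold and as the lower bound on $|V|$ is precisely what makes the Pascal identity $R(m'-1,n')+R(m',n'-1)=R(m',n')$ propagate the invariant cleanly through every level of the recursion; with the exact $r$, the child set could fall short of what the subgame requires.
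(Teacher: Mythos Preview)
Your proposal is correct and follows essentially the same approach as the paper: define $R(m',n')=\frac{1}{\mathcal{L}}\binom{m'+n'-2}{m'-1}$, run the Erd\H{o}s--Szekeres pivot recursion using $R$ as the threshold so that Pascal's identity propagates the invariant $|V|\ge R(m',n')$, stop when $m'=m_0$ or $n'=n_0$, fill in the clique, and bound the total cost as (branching)${}+{}$(terminal clique). Your preliminary induction extending $r(m',n')\le R(m',n')$ from the boundary to the whole grid is harmless but unnecessary: since the recursion decrements one coordinate at a time, termination occurs exactly on the boundary $m'=m_0$ or $n'=n_0$, where the hypothesis applies directly.
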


\begin{proof}
We describe a general Builder strategy for the online Ramsey game
with parameters $m$ and $n$ and some savings parameter $\mathcal{L}$. Let $f(m,n)=\frac{1}{\mathcal{L}}\binom{m+n-2}{m-1}$,
so we have $f(m-1,n)+f(m,n-1)=f(m,n)$ by Pascal's identity.

Begin by building $f(m,n) - 1$ edges out of a given initial vertex $v_{1}$.
If $f(m-1,n)$ of these edges are colored red, we proceed to
the red neighborhood of $v_1$; otherwise, we proceed to
the at least $f(m,n-1)$ vertices in the blue neighborhood of $v_1$. If at some step we reach a neighborhood with $f(m - i, n - j)$ vertices, we build $f(m - i, n -j) - 
1$ edges inside this neighborhood from one of the vertices, which we label 
$v_{i+j+1}$. If $f(m - i - 1, n - j)$ of these edges are colored red, we proceed 
to the red neighborhood of $v_{i+j+1}$; otherwise, we proceed to the at least 
$f(m-i,n-j-1)$ vertices in the blue neighborhood of $v_{i+j+1}$.
We stop once $m$ reaches $m_0$ or $n$ reaches $n_0$, ending up with either 
$f(m_{0},n')$ vertices for some $n_{0}\le n'\le n$ or $f(m',n_{0})$ vertices for some $m_{0}\le m'\le m$. Once we reach this stage, we build all edges in the remaining set.

Suppose now that we arrive at a set $S$ of order $f(m_0, n')$. By construction, there are $\ell = m + n - m_0 - n'$ vertices $v_1, \dots, v_\ell$ such that $m-m_0$ of the vertices $v_i$ are joined in red to every $v_j$ with $j > i$ and every $w \in S$. The remaining $n - n'$ vertices $v_i$ are joined in blue to every $v_j$ with $j > i$ and every $w \in S$. But since
\[
r(m_0, n') \le \frac{1}{\mathcal{L}}\binom{m_{0}+n'-2}{m_{0}-1} = f(m_0,n'),
\]
the complete graph on $S$ contains either a red $K_{m_0}$ or a blue $K_{n'}$, either of which can be completed to a red $K_m$ or a blue $K_n$ by using the appropriate subset of $v_1, \dots, v_\ell$. If we had instead arrived at a set of order $f(m',n_0)$, a similar analysis would have applied.

Note that the total number of edges built in the branching phase is at most
$(m+n)f(m,n)$, while the number built by filling in the final clique is at most $
\max(f(m_{0},n)^{2}, f(m,n_{0})^{2})$. Using the choice of $m_0$ and $n_0$, the total number of edges built is easily seen to be at most a constant in $m$ times the previous expression. 
\end{proof}

From here we derive Theorem~\ref{thm:online-upper-bound}.

\vspace{3mm}
\noindent
{\it Proof of Theorem~\ref{thm:online-upper-bound}.} 
We apply the bound
\[
r(m,n) = O_m(n^{m-1}/\log ^{m-2} n),
\]
due to Ajtai, Koml\'os and Szemer\'edi~\cite{AjKoSz}. In particular, suppose $m_0=\lfloor m/2 \rfloor + 1$, $n_0=\lfloor \sqrt{n} \rfloor$ and $m', n'$ satisfy $m_0 \le m' \le m$ and $n_0 \le n' \le n$. Then, for some constants $C, C' >0$ depending only on $m$, we have
\[
r(m_0, n') \le \frac{C}{\log ^{m_0-2} n'} (n')^{m_0-1} \le \frac{C'}{\log^{\lfloor m/2 \rfloor - 1} n}\binom{m_{0}+n'-2}{m_{0}-1}
\]
and
\[
r(m', n_0) \le \frac{C}{\log ^{m'-2} n_0} n_0^{m'-1} \le \frac{C'}{\log^{\lfloor m/2 \rfloor - 1} n}\binom{m'+n_{0}-2}{m'-1},
\]
verifying the conditions of Lemma~\ref{lem:classical-to-online-upper-bound} with $\mathcal{L} = \Omega_m(\log^{\lfloor m/2 \rfloor - 1} n)$. It follows by that lemma that there exists another constant $C''>0$ depending only on $m$ for which
\[
\tilde{r}(m,n) \le \frac{C'' n}{\log^{\lfloor m/2 \rfloor - 1} n} \binom{m+n-2}{m-1}.
\]

Fixing $m\ge 3$ and taking $n\rightarrow\infty$, this implies
\[
\tilde{r}(m,n)= O_m\left(\frac{n^{m}}{\left(\log n\right)^{\lfloor m/2\rfloor-1}}\right),
\]
as desired.
\hfill \qedsymbol

\vspace{3mm}

We remark that while the statement and proof of Lemma~\ref{lem:classical-to-online-upper-bound} are designed for the case where $m$ is a constant, they can be easily modified to make them meaningful for all $m$ and $n$. 

\section{The Subgraph Query Problem} \label{sec:subgraph-query}

The vertex cover argument in Section~\ref{sec:generallowerbounds} was motivated by our study of the closely-related Subgraph Query Problem. Indeed, one can view this problem as an instance of the online Ramsey game with a random Painter where Builder single-mindedly tries to build a clique in one color, ignoring the other color entirely.

Let $p\in(0,1)$ be the probability that Builder successfully builds any given edge in the Subgraph Query Problem. We are primarily interested in the quantity $f(H,p)$, which we defined as the minimum $N$ for which there exists a Builder strategy which builds a copy of $H$ with probability at least $\frac{1}{2}$ in $N$ turns. Of secondary interest is the quantity $t(H,p,N)$, which we define as the maximum, over all Builder strategies, of the expected number of copies of $H$ that can be built in $N$ turns. It is easy to see that
\begin{equation*}
t(H,p,N) < \frac{1}{2} \implies f(H,p) > N.
\end{equation*}
Thus, upper bounds on $t(H,p,N)$ yield lower bounds on $f(H,p)$.

\subsection{Connection with online Ramsey numbers}\label{sec:query-online-connection}

We first check that the Subgraph Query Problem gets easier when edges are built with higher probability.

\begin{lem}\label{lem:f-monotone}
For any $m\ge 3$, $f(K_m,p)$ is a nonincreasing function of $p\in (0,1)$.
\end{lem}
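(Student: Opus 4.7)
The plan is to prove monotonicity by a direct coupling argument: given $p < q$, we will simulate the $p$-game inside the $q$-game so that any Builder strategy achieving success probability $\geq 1/2$ in $N$ queries at parameter $p$ can be converted into one achieving the same success probability at parameter $q$ using no more queries, yielding $f(K_m,q) \le f(K_m,p)$.

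First, I would fix $p<q$ in $(0,1)$ and let $\sigma_p$ be an optimal Builder strategy for parameter $p$, which succeeds in building a copy of $K_m$ with probability at least $1/2$ in $N=f(K_m,p)$ queries. I then describe a Builder strategy $\sigma_q$ for the $q$-game as follows. The $q$-Builder maintains a simulated transcript of the $p$-game. Whenever $\sigma_p$ calls for querying an edge $e$, the $q$-Builder queries $e$ in the real $q$-game and observes whether $e$ is present. If $e$ is absent in the $q$-game, the $q$-Builder reports "absent" to $\sigma_p$. If $e$ is present in the $q$-game, the $q$-Builder independently flips a biased coin and reports "present" to $\sigma_p$ with probability $p/q$ and "absent" with probability $1-p/q$.

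The key verification is that this simulation faithfully reproduces the $p$-game: each edge queried appears "present" in the simulated transcript with probability $q\cdot(p/q)=p$, and these indicators are mutually independent conditional on the sequence of queries, exactly as in a genuine $p$-game. Since $\sigma_p$ issues at most $N$ distinct queries on its own transcript, so does $\sigma_q$, and the rule that each edge is queried at most once is respected (the relabeling never induces a repeat query since $\sigma_p$ tracks its own query history, not the underlying graph). Crucially, every edge that appears "present" in the simulation is genuinely present in the $q$-game, so whenever the simulation completes a copy of $K_m$, the $q$-game has actually built that copy. Therefore $\sigma_q$ succeeds with probability at least $1/2$ in $N$ queries, establishing $f(K_m,q)\le N=f(K_m,p)$.

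No step is genuinely hard here; the only thing to watch is the bookkeeping around the "each edge queried at most once" rule and ensuring the coupling preserves independence, but both are automatic from the construction. The same argument would in fact show that $f(H,p)$ is nonincreasing in $p$ for any fixed graph $H$.
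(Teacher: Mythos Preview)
Your proof is correct and takes essentially the same approach as the paper: both argue that a Builder for the $q$-game can simulate the $p$-game by flipping an auxiliary $p/q$-coin whenever an edge is successfully built, reporting success to the $p$-strategy only when the coin comes up heads, and observing that any $K_m$ found in the simulation is genuinely present in the $q$-game.
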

\begin{proof}
Suppose $p < q$ and $f(K_m, p) = N$. This means that in the Subgraph Query Problem with parameter $p$, Builder has an $N$-move strategy $S$ to win with probability at least half. Strategy $S$ is defined by Builder's choice of edge to build at each step, given the data of which edges were successfully built in previous steps.

Builder's strategy for the Subgraph Query Problem with parameter $q$ is as follows. For each edge that Builder successfully builds, Builder then flips a biased coin that comes up heads $\frac{p}{q}$ of the time. If the coin comes up tails, Builder pretends the edge actually failed to build, and acts according to strategy $S$ with respect to only the edges for which the coin came up heads. 
Just looking at the edges which come up heads, Builder is exactly following strategy $S$, and so builds a $K_m$ with probability at least $1/2$ in $N$ steps.
\end{proof}

We now prove Theorem~\ref{thm:query-online-connection}, which connects the Subgraph Query Problem to the online Ramsey game. Recall the statement:
\[
\tilde{r}(m,n;p) \le \min\{f(K_m, p), f(K_n,1-p)\} \le 3\tilde{r}(m,n;p).
\]

\vspace{3mm}
\noindent
{\it Proof of Theorem \ref{thm:query-online-connection}.}
We first show the left side of the inequality. Let $N=\min\{f(K_m, p), f(K_n,1-p)\}$ and suppose that $f(K_m,p)$ is the smaller of the two. Then there exists an $N$-move Builder strategy which builds a $K_m$ with probability at least half. Now, let Builder play the online Ramsey game against a random Painter with the same probability parameter $p$. Builder's strategy will be to treat red edges as successfully built and blue edges as failed. In this way, Builder wins the online Ramsey game in $N$ moves with probability at least half, by constructing a red $K_m$. Similarly, if $f(K_n, 1-p)$ were smaller, Builder would instead treat blue edges as successfully built and red edges as failed. This would then guarantee the construction of a blue $K_n$ with probability at least half.

Now we show the right side of the inequality. Suppose $N=\tilde{r}(m,n;p)$, so in the online Ramsey game against random Painter with parameter $p$, there exists an $N$-move Builder strategy which builds a red $K_m$ or blue $K_n$ with probability at least half. In particular, this same strategy guarantees either a red $K_m$ with probability at least $\frac{1}{4}$ or a blue $K_n$ with probability at least $\frac{1}{4}$.

Suppose the first is true. Then Builder plays the Subgraph Query Game using this same strategy, treating red edges as successfully built and blue as failed. In $N$ moves, he has at least a $\frac{1}{4}$ chance of successfully building a $K_m$. Repeating this strategy three independent times on three different vertex sets, Builder uses $3N$ moves to build a $K_m$ with probability at least
\[
1-\Big(1-\frac{1}{4}\Big)^3 = \frac{37}{64} > \frac{1}{2},
\]
showing that $f(K_m,p) \le 3\tilde{r}(m,n;p)$ in this case. Similarly, if the second case occurs, $f(K_n,1-p) \le 3\tilde{r}(m,n;p)$. Either way, the smaller of $f(K_m,p)$ and $f(K_n, 1-p)$ is bounded above by $3\tilde{r}(m,n;p)$.
\hfill \qed

\vspace{3mm}

Now we show that Conjecture~\ref{conj:query-2/3} about the Subgraph Query Problem directly implies Conjecture~\ref{conj:2/3} about online random Ramsey numbers.

\vspace{3mm}
\noindent
{\it Proof that Conjecture~\ref{conj:query-2/3} implies Conjecture~\ref{conj:2/3}.}
Assume Conjecture~\ref{conj:query-2/3}, i.e., $f(K_m, p) = 2^{o(m)}p^{-\frac{2}{3} m + c_m}$ for all $m\ge 3$, $p\in(0,1)$.
By Theorem~\ref{thm:query-online-connection}, we have
\begin{equation}\label{eq:online-to-f}
\tilde{r}(m,n;p) = \Theta(\min\{f(K_m,p), f(K_n,1-p)\}).
\end{equation}

In the diagonal case of the online Ramsey game, (\ref{eq:online-to-f}) together with Lemma \ref{lem:f-monotone} implies that $p=\frac{1}{2}$ gives the online random Ramsey number to within a constant factor. Thus,
\[
\tilde{r}_{\text{\normalfont rand}}(n,n) = 2^{\frac{2}{3} n + o(n)}.
\]
This proves part (a).

In the off-diagonal case, a value of $p$ nearly optimizing the right hand side of (\ref{eq:online-to-f}) satisfies $p = \Theta(\frac{m}{n}\log{\frac{n}{m}})$ by Theorem~\ref{thm:opt-p}, which is proved in Subsection \ref{sec:recursive-graph-building}. Plugging in this value of $p$, we get
\[
\tilde{r}_{\text{\normalfont rand}}(m,n)=2^{o(m)}\Big(\Theta\left(\frac{m}{n}\log{\frac{n}{m}}\right)\Big)^{-\frac{2}{3}m+c_m},
\]
which implies case (b) of Conjecture~\ref{conj:2/3}.
\hfill \qed

\vspace{3mm}

\subsection{The Branch and Fill Strategy}  \label{sec:upper-bound-strat}

We now prove the upper bound in Conjecture~\ref{conj:query-2/3}.

We will say it is possible to build a graph $H$ in $O(T)$ turns, where $T=T(p)$ is a function of $p$, if for any $p\in(0,1)$ it is possible, in the Subgraph Query Game played with probability $p$, to build a copy of $H$ in $O(T)$ time with probability at least $\frac{1}{2}$. It is a simple fact about randomized algorithms that if one can achieve any constant success probability in $O(T)$ time then one can iterate the algorithm to succeed with probability $1-\varepsilon$ in $O(T \log{\varepsilon^{-1}})$ time.

We describe a Builder strategy to prove the upper bound in Conjecture~\ref{conj:query-2/3} and conjecture that this is essentially the optimal strategy for the Subgraph Query Problem for cliques.

\begin{lem} \label{lem:branch-and-fill}
Let $a\ge 1$, $b \ge 2$ and $n=a+b+1$ satisfy $2a+3-b\ge 0$. Then
\[
f(K_{n},p) = O_n(p^{-\frac{2a+b+1}{2}+\frac{\alpha}{b}}),
\]
where $\alpha = \min(1, \frac{b(2a+3-b)}{2(b-1)})$.
\end{lem}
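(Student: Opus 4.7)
The plan is to exhibit a Builder strategy, the \emph{Branch-and-Fill} algorithm, depending on the parameters $a$ and $b+1$, whose total cost matches the claimed exponent $(2a+b+1)/2-\alpha/b$. The branch phase iteratively builds $v_1,\ldots,v_a$ forming a $K_a$ together with a common neighborhood $U$ of size approximately $M$: Builder picks $v_1$ and queries edges to expose its neighborhood of size $M/p^{a-1}$; then picks $v_2\in N(v_1)$ and queries $v_2$'s edges within $N(v_1)$ to narrow down to the common neighborhood of $\{v_1,v_2\}$ of size $M/p^{a-2}$; and so on, ending with $|U|\approx M$ after $a$ rounds. A geometric-series computation gives branch cost $\Theta(M/p^a)$, dominated by the first step.

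The fill phase must locate $K_{b+1}$ inside $U$, which behaves like an unexplored $G(M,p)$ because the edges inside $U$ have not been queried. Rather than the naive brute force on all pairs (which costs $\Theta(M^2)$ and requires $M\ge p^{-b/2}$ for feasibility), the strategy performs some number $a'$ of further sub-branches inside $U$ (each picking a new vertex and querying its edges to the current common neighborhood, shrinking it by a factor $\approx p$) followed by a brute-force search for $K_{b+1-a'}$ in the final common neighborhood of size $\approx p^{a'}M$. Feasibility of the final step requires $p^{a'}M\ge p^{-(b-a')/2}$, so that $M\ge p^{-(b+a')/2}$, and its cost is $\Theta((p^{a'}M)^2)$; the sub-branches contribute $\Theta(M)$ queries (again dominated by the first step).

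Optimizing the total cost $M/p^a+M+(p^{a'}M)^2/2$ over $M$ and $a'$ subject to the above constraint yields the exponent $(2a+b+1)/2-\alpha/b$. The value $\alpha=\min(1,b(2a+3-b)/(2(b-1)))$ encodes the best achievable choice of $a'$: in the regime $\alpha=1$ (roughly $b\le 2a+1$), a single full sub-branch and a careful choice of $M$ already produce the optimum, with the $1/b$ term arising as the saving of brute force over the trivial upper bound; in the regime $\alpha<1$ (i.e., $b\in\{2a+2,2a+3\}$), the integer $a'$ is suboptimal and one interpolates using a \emph{partial} sub-branch, where Builder queries the new vertex's edges only to a subset of a carefully chosen size $\rho M_i$ of the current common neighborhood, trading cost against the effective shrinkage factor. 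The side condition $2a+3-b\ge 0$ is exactly what keeps $\alpha\ge 0$, i.e., what keeps the strategy nontrivial.

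The main technical obstacle I anticipate is the probabilistic analysis of the final brute-force step. When the size of the final common neighborhood is only just above the threshold $p^{-(b-a')/2}$, the expected number of $K_{b+1-a'}$ copies inside is a small constant, and success with probability $\tfrac{1}{2}$ is not automatic. A second-moment argument, most cleanly via Janson's inequality or a Paley--Zygmund bound on the clique count, should suffice to control the variance. One must also handle the conditioning on the random outcomes of the preceding branch and sub-branch phases; this is in principle straightforward because the edges inside the final common neighborhood are independent of the queries used to reveal the branches, but the bookkeeping, together with the verification of the precise constant in the definition of $\alpha$ for the partial-sub-branch case, will be the most delicate part of the proof.
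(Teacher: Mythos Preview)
Your single-path Branch-and-Fill strategy does not achieve the stated exponent; the optimization you describe over $M$ and $a'$ yields at best $(2a+b+1)/2$, not $(2a+b+1)/2-\alpha/b$. Take $(a,b)=(1,3)$, so $n=5$ and $\alpha=1$: the lemma promises $f(K_5,p)=O(p^{-8/3})$, but your strategy costs at least $p^{-3}$ for every integer $a'\in\{0,1,2\}$. For instance, with $a'=1$ the threshold for finding $K_3$ in the final set of size $pM$ forces $M\ge p^{-2}$, and then the branch step alone already costs $M/p\ge p^{-3}$; the cases $a'=0$ and $a'=2$ are no better. The partial sub-branch does not rescue this: querying the new pivot $w$ only against a subset $U'\subset U$ of size $\rho M$ and then searching inside $N(w)\cap U'$ is indistinguishable from having started with the smaller common neighbourhood $U'$ in the first place, so it is merely a reparametrization of $M$ and introduces no genuinely new trade-off.

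The ingredient you are missing is \emph{repetition} rather than interpolation. In the paper's proof, after building the common neighbourhood $W$ of the initial $K_a$, Builder does not commit to a single pivot but selects $p^{-\alpha}$ pivots $w_1,\dots,w_{p^{-\alpha}}$ from $W$ in turn (removing each $\{w_i\}\cup W_i$ from $W$ before choosing the next), and for every $i$ fills in all pairs inside $W_i=N(w_i)\cap W$. Crucially, $|W_i|\approx p^{a+1}T$ is deliberately \emph{below} the threshold for containing a $K_b$, so each individual attempt succeeds only with probability $\Theta(p^{\alpha})$; the $p^{-\alpha}$ attempts are independent and together succeed with constant probability. The parameter $\alpha\le 1$ is chosen so that the aggregate cost $p^{-\alpha}\bigl(|W|+|W_i|^2\bigr)$ of these repeated fills matches the branch cost $T$, and this balancing is precisely what produces the extra $\alpha/b$ in the exponent. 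In your cost accounting this corresponds to minimizing $Mp^{-a}+K\bigl(M+(pM)^2\bigr)$ subject to $K\cdot p^{\binom{b}{2}}(pM)^b=\Omega(1)$, with the number of repetitions $K=p^{-\alpha}$ as an essential third parameter alongside $M$ and $a'$.
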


\begin{proof}
To build a clique $K_{n}$ in $O(T)$ turns, where $T = p^{-\frac{2a+b+1}{2}+\frac{\alpha}{b}}$, we follow a strategy with three phases:

\begin{enumerate}

\item Build a clique $U$ on $a$ vertices. By induction, the number of
turns needed will be negligible.

\item Find $p^{a}T$ common neighbors of $U$ in $O_n(T)$
time with high probability. This is done by repeatedly picking a new
vertex $v$ and trying to build each of the edges between $v$ and the vertices in
$U$ until one fails. Let $W$ be the set of common neighbors found in
this way.

\item Among the vertices of $W$, pick a vertex $w_{1}$
and try to build all edges incident to $w_{1}$ within $W$. Let $W_{1}=N(w_{1})\cap W$
be the neighborhood determined. Try to build all $\binom{|W_{1}|}{2}$
edges within $W_{1}$. Remove $\{w_{1}\}\cup W_{1}$ from $W$ and
repeat a total of $p^{-\alpha}$ times, picking $w_{2},\ldots,w_{p^{-\alpha}}$,
finding their neighborhoods, and filling them in. Here, $\alpha\in[0,1]$ is a parameter which we have not yet specified.

\end{enumerate}
After the process is complete, if any one of the $W_{i}$ contains
a $b$-clique $W_{i}'$, then we are done, since $U\cup\{w_{i}\}\cup W_{i}'$ forms an $n$-clique. 

It remains
to determine the success probability and the number of steps taken
in the above process. By the standard Chernoff bounds, the sizes of all
the sets $W_{i}$ concentrate around their means with high
probability. Hence, with high probability,
\[|W_{i}| = (1+o(1))p^{a+1}(1-p)^{i-1}T.\]
A standard application of Janson's inequality (see Chapters 8 and 10 of~\cite{AlSp}) then implies
\[
\Pr[W_{i}\text{ contains a \ensuremath{b}-clique}] = \Omega_b\left(\min(p^{\binom{b}{2}} |W_i|^b, 1)\right) = \Omega_{b}\left(\min(p^{(a+1)b+\binom{b}{2}}(1-p)^{(i-1)b}T^{b}, 1)\right).
\]
If $i$ ranges up to $p^{-\alpha}$ and $\alpha\le1$, then the decay
factor $(1-p)^{(i-1)b}$ is $\Theta_{b}(1)$ and can be safely ignored.
Since the event that each $W_{i}$ contains a $b$-clique is independent
of all the others, we need only pick $p,T,\alpha$ for which the expression
$p^{-\alpha}p^{(a+1)b+\binom{b}{2}}T^{b}$ is a positive
constant. If this is the case, then with at least constant probability
our strategy constructs an $n$-clique.

We also need to know that the total number of turns taken is $O_n(T)$.
This is true in Phases 1 and 2 by design. With high probability, the number
of turns taken in filling out each $W_i$ is $O_a(p^{a}T+p^{2(a+1)}T^{2})$. Since this is repeated $p^{-\alpha}$ times, it suffices to have
\[
T =  O_a(p^{\alpha-2(a+1)})
\]
for the number of turns to be $O(T)$. 
It remains to optimize the value of $T$ subject to 
the constraints
$T = O_a(p^{\alpha-2(a+1)})$ and
$p^{-\alpha}p^{(a+1)b+\binom{b}{2}}T^{b} = \Omega_b(1)$.
As long as $2a + 3 - b \ge 0$, this system has solutions. Solving for $\alpha$ which minimizes $T$, we find that any
\[
\alpha\le \frac{b(2a+3-b)}{2(b-1)}
\]
works, as long as the decay condition $\alpha \le 1$ was also satisfied.
\end{proof}

Lemma~\ref{lem:branch-and-fill} provides upper bounds for $f(K_m,p)$ for all $m\ge 4$, where the shape of the power of $p$ depends on the residue class of $m$ modulo $3$.

\begin{thm} \label{thm:detailed-upper-bound}
If $p\in(0,1)$, then $f(K_3,p) = O(p^{-3/2})$ and, for $m\ge4$, 
\[
f(K_m, p) = O_m(p^{-\frac{2}{3}m + c_m}),
\]
where

\[
c_m = \begin{cases}
\frac{m}{2m-3} & m\equiv0\pmod3\\
\frac{2}{3} & m\equiv1\pmod3\\
\frac{2m+8}{6m-3} & m\equiv2\pmod3.
\end{cases}
\]
\end{thm}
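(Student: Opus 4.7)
The plan is to apply Lemma~\ref{lem:branch-and-fill} with a carefully chosen pair $(a,b)$ satisfying $a+b+1=m$ for each $m\ge 4$, and to handle $m=3$ separately as a base case. Using the substitution $a+b+1=m$, the exponent delivered by that lemma simplifies to
\[
E \;=\; -m + \frac{b+1}{2} + \frac{\alpha}{b}, \qquad \alpha \;=\; \min\!\Bigl(1,\,\tfrac{b(2a+3-b)}{2(b-1)}\Bigr),
\]
and the task reduces to choosing $(a,b)$ so that $E$ attains the prescribed value $-\tfrac{2m}{3}+c_m$.

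For $m=3$, Lemma~\ref{lem:branch-and-fill} does not apply since no integer solution with $a\ge 1$, $b\ge 2$, $a+b+1=3$ exists. A direct construction suffices: Builder fixes a root vertex $v$, queries about $p^{-3/2}$ potential edges from $v$ to collect a set $W$ of roughly $p^{-1/2}$ actual neighbors, and then queries all $\binom{|W|}{2}=\Theta(p^{-1})$ pairs inside $W$. A Chernoff bound controls $|W|$ and Janson's inequality (exactly as invoked in Phase~3 of Lemma~\ref{lem:branch-and-fill}) shows that with probability $\Omega(1)$ there is an edge in $W$, completing a triangle, for a total of $O(p^{-3/2})$ queries.

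For $m\ge 4$, the optimal choices turn out to be $(a,b)=\bigl(\tfrac{m-3}{3},\tfrac{2m}{3}\bigr)$ when $m\equiv 0\pmod 3$, $(a,b)=\bigl(\tfrac{m-2}{3},\tfrac{2m-1}{3}\bigr)$ when $m\equiv 2\pmod 3$, and $(a,b)=\bigl(\tfrac{m-4}{3},\tfrac{2m+1}{3}\bigr)$ when $m\equiv 1\pmod 3$ and $m\ge 7$; the case $m=4$ does not fit the $m\equiv 1$ template and is patched by the ad hoc choice $(a,b)=(1,2)$. A short calculation shows that $2a+3-b$ equals $1$, $2$, or $0$ in the three residue classes respectively, so $\alpha$ simplifies to $\tfrac{b}{2(b-1)}$, $1$, or $0$. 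In each case, plugging $(a,b)$ into the formula for $E$ and simplifying yields $-\tfrac{2m}{3}+c_m$ with the claimed value of $c_m$; for the $m=4$ patch one computes $E=-4+\tfrac{3}{2}+\tfrac{1}{2}=-2=-\tfrac{8}{3}+\tfrac{2}{3}$, matching $c_4=\tfrac{2}{3}$.

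The main obstacle is really just bookkeeping through the three residue classes plus the exceptional base $m=4$, together with verifying that the constraints $a\ge 1$, $b\ge 2$, $2a+3-b\ge 0$ of Lemma~\ref{lem:branch-and-fill} hold in each case. The induction implicit in Phase~1 of that lemma, which needs a $K_a$ to be built in negligibly many queries, is well-founded because $a \le m-3 < m$ for every $m\ge 4$, so we may appeal recursively to the bound already proved for smaller cliques; the cost of this inductive step is easily seen to be asymptotically smaller than $p^{E}$ and hence negligible.
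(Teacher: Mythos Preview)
Your proposal is correct and follows essentially the same approach as the paper: both handle $m=3$ by the direct root-plus-neighborhood construction and then invoke Lemma~\ref{lem:branch-and-fill} with the same choices of $(a,b)$ in each residue class modulo~$3$. Your treatment is in fact slightly more careful than the paper's: the paper applies the $m\equiv 1$ formula uniformly, which at $m=4$ gives $a=0$ and technically violates the hypothesis $a\ge 1$ of Lemma~\ref{lem:branch-and-fill}, whereas you correctly flag this and supply the valid patch $(a,b)=(1,2)$.
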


\begin{proof}
For $m = 3$, the bound is simple. Query $\Theta(p^{-3/2})$ pairs containing a given vertex $v_1$ and then, among the $\Theta(p^{-1/2})$ neighbors successfully found, query all pairs. For sufficiently large implied constants, the probability that we build a triangle containing $v_1$ is at least $1/2$.

When $m\ge 4$, we use Lemma~\ref{lem:branch-and-fill}, taking
\[
(a,b)=\begin{cases}
\Big(\frac{m-3}{3},\frac{2m}{3}\Big) & m\equiv0\pmod3\\
\Big(\frac{m-4}{3},\frac{2m+1}{3}\Big) & m\equiv1\pmod3\\
\Big(\frac{m-2}{3},\frac{2m-1}{3}\Big) & m\equiv2\pmod3.
\end{cases}
\]
This gives the required result.
\end{proof}

We conjecture that the bounds in Theorem~\ref{thm:detailed-upper-bound} are best possible up to the constant factor.  In the next two subsections, we prove this is the case for $m\le 5$.

\subsection{Recursive graph building} \label{sec:recursive-graph-building}

Recall that $f(H,p)$ is the number of queries
needed in the Subgraph Query Problem to build a copy of $H$ with probability at least $\frac{1}{2}$. When $H=K_m$, we can prove a lower bound on $f(H,p)$ by combining Theorem~\ref{thm:query-online-connection} with Theorem~\ref{thm:random-lower-bound}.

\begin{prop}\label{prop:f-bound-clique}
If $m\ge 3$ and $c\le\frac{1}{2}m$, then
\[
f(K_m,p) \ge \frac{1}{4} p^{-(\binom{m}{2} - c(c-1))/(m-c)}.
\]
\end{prop}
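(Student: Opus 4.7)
The plan is to chain together Theorem~\ref{thm:query-online-connection} and Theorem~\ref{thm:random-lower-bound}. The left inequality in Theorem~\ref{thm:query-online-connection} gives $f(K_m, p) \geq \tilde{r}(m, n; p)$ for every $n \geq 3$, so it suffices to exhibit some $n$ for which $\tilde{r}(m, n; p) > N$, where $N := \tfrac{1}{4} p^{-\alpha}$ and $\alpha := (\binom{m}{2} - c(c-1))/(m - c)$.

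To produce such an $n$, I will invoke Theorem~\ref{thm:random-lower-bound} with the given $c$ and with $d := 0$. Plugging $N = \tfrac{1}{4} p^{-\alpha}$ into the first summand makes it collapse to a pure constant, since
\[
p^{\binom{m}{2} - c(c-1)} (2N)^{m-c} = p^{\binom{m}{2} - c(c-1)} \cdot 2^{-(m-c)} \cdot p^{-\alpha(m-c)} = 2^{-(m-c)},
\]
and the constraint $c \le m/2$ with $m \ge 3$ (and $c$ a non-negative integer, as demanded by the iterative structure in the proof of Theorem~\ref{thm:random-lower-bound}) forces $m - c \ge 2$, so this red summand is at most $1/4$. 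For the blue summand with $d = 0$, namely $(1-p)^{\binom{n}{2}} (2N)^n$, the logarithm is a quadratic in $n$ with strictly negative leading coefficient $\tfrac{1}{2}\log(1-p)$, so the term decays super-exponentially in $n$. Hence, for all sufficiently large $n$, the blue summand is also at most $1/4$.

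Combining the two bounds, the hypothesis of Theorem~\ref{thm:random-lower-bound} is satisfied for the chosen $N$ and any sufficiently large $n$, yielding $\tilde{r}(m, n; p) > N$ and therefore $f(K_m, p) > N = \tfrac{1}{4} p^{-\alpha}$, which is the stated bound.

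The main obstacle here is genuinely minor: one only needs to verify that the red term, evaluated at the critical value $N = \tfrac{1}{4} p^{-\alpha}$, leaves enough slack (strictly below $1/2$) so that the blue term can be made to fit. This works precisely because the exponent $\alpha$ was tailored so that $p$ cancels completely in the red term, and the residual $2^{-(m-c)}$ is bounded away from $1/2$ via $m - c \ge 2$. No clever choice of $d$ is needed, since the blue term can always be sent to zero by letting $n \to \infty$ while keeping $d = 0$.
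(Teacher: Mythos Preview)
Your proof is correct and follows essentially the same approach as the paper: set $N = \tfrac{1}{4}p^{-\alpha}$, take $d=0$, observe the red summand collapses to $2^{-(m-c)}\le \tfrac14$, send the blue summand to zero by taking $n$ large, and then chain Theorem~\ref{thm:random-lower-bound} with Theorem~\ref{thm:query-online-connection}. You are in fact slightly more careful than the paper in explicitly justifying $m-c\ge 2$ via the integrality of $c$.
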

\begin{proof}
Take $N = \frac{1}{4} p^{-(\binom{m}{2} - c(c-1))/(m-c)}$, which is chosen so that
\[
p^{\binom{m}{2} - c(c-1)}(2N)^{m-c} \le \frac{1}{4}.
\]
Since $(1-p)^{\binom{n}{2}}(2N)^n\rightarrow 0$ as $n\rightarrow\infty$, there is some $n$ sufficiently large for which
\[
p^{\binom{m}{2} - c(c-1)}(2N)^{m-c}+ (1-p)^{\binom{n}{2}}(2N)^n \le \frac{1}{2}.
\]
With $d=0$, this choice of $m,n,N,p,c,d$ satisfies the conditions of Theorem~\ref{thm:random-lower-bound}, so $\tilde{r}(m,n;p) > N$. By Theorem~\ref{thm:query-online-connection}, $f(K_m,p) \ge \tilde{r}(m,n;p)$, giving the required result.
\end{proof}

We now describe a general
method for obtaining a similar lower bound on $f(H,p)$ when $H$ is not a clique. As
before, define $t(H,p,N)$ to be the maximum expected number of copies of
$H$ that can be constructed in $N$ queries. The main result of this section bounds $t(H,p,N)$ when $H$ contains a large matching. To this end, recall that a graph has a $k$-matching if it contains $k$ disjoint edges.

\begin{thm} \label{thm:t-bound}
Let $H$ be a graph containing a $k$-matching. Then there exists an absolute constant $A>1$ for which
\[
t(H,p,N)\le (Ae(H))^{e(H)} p^{e(H)-k(k-1)}(2N)^{v(H)-k},
\]
whenever $pN \ge 1$.
\end{thm}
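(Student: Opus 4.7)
The plan is to prove Theorem~\ref{thm:t-bound} by induction on the matching parameter $k$, closely paralleling the weighted counting argument in the proof of Lemma~\ref{lem:weight}; matchings play the structural role that vertex covers played there. The base case $k=0$ is immediate from the trivial estimate $t(H,p,N)\le (2N)^{v(H)}p^{e(H)}$ (each of the at most $(2N)^{v(H)}$ labeled injections is fully built with probability at most $p^{e(H)}$). For the inductive step, fix a Builder strategy and a $k$-matching $M=\{e_1,\ldots,e_k\}$ of $H$, and assign to each labeled injection $\phi:V(H)\to V(G)$ the martingale weight
\[
w_\phi(t)=p^{e(H)-e_\phi(t)}\,\mathbf{1}[\text{every queried edge of }\phi(H)\text{ has been built}],
\]
where $e_\phi(t)$ is the number of queried edges of $\phi(H)$ by time $t$. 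Writing $\sigma(\phi,t)$ for the number of edges of $\phi(M)$ built by time $t$ and $Z(t)=\sum_\phi w_\phi(t)\,\mathbf{1}[\sigma(\phi,t)=k]$, we have $t(H,p,N)\le\mathbb{E}[Z(N)]$, since every completely built copy has $\sigma(\phi,N)=k$ and $w_\phi(N)\ge\mathbf{1}[\phi\text{ fully built}]$.

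Mimicking the critical-time decomposition of Lemma~\ref{lem:weight}, optional stopping at the first time $\sigma(\phi,\cdot)=k$ gives $\mathbb{E}[Z(N)]=\sum_{t=1}^N\mathbb{E}[Z^*(t)]$, where $Z^*(t)$ collects the copies $\phi$ first reaching $\sigma=k$ at time $t$. Such a $\phi$ has $e_t=\phi(e_j)$ for some $j\in[k]$, and the restriction $\phi'=\phi|_{V(H)\setminus V(e_j)}$ is a labeled copy of $H_j:=H\setminus V(e_j)$ whose image contains the $(k-1)$-matching $\phi(M\setminus\{e_j\})$ fully built by time $t-1$. A direct computation yields the weight identity
\[
w_\phi(t)=p^{d_j-q_j}\,w_{\phi'}^{H_j}(t-1),
\]
where $d_j=\deg_H(u)+\deg_H(v)-1$ for $e_j=\{u,v\}$ and $q_j\in[1,d_j]$ is the number of queried-and-built edges of $\phi(H)$ incident to $V(e_t)$. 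Summing over the two bijections $V(e_j)\to V(e_t)$, unioning over $j\in[k]$, and taking expectations (which introduces a factor $p$ for the event that $e_t$ is built) reduces $\mathbb{E}[Z^*(t)]$ to an expression involving weights on labeled copies of $H_j$, to which the inductive hypothesis applies.

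The main obstacle is that naively bounding $p^{d_j-q_j}\le 1$ loses too much: on a star $H=K_{1,m}$ with $k=1$ and $d_j=m$, this crude estimate would yield a bound of order $(2N)^m p/m$, whereas the true behaviour (and the theorem's claim) is of order $(Nmp)^m$, off by a factor of $(mp)^{m-1}$. Instead, one must preserve the $p^{d_j-q_j}$ factor in the sum, grouping contributions by the value of $q_j$ and exploiting the trade-off between the weight $p^{d_j-q_j}$ and the number of $\phi$-configurations with a given number of pre-built incident edges. For the star, a binomial identity of the form
\[
\sum_{r=0}^{d_j-1}\binom{d_j-1}{r}n^{r}(2Np)^{d_j-1-r}=(n+2Np)^{d_j-1}
\]
(where $n$ is the relevant degree of a potential center at time $t-1$) captures exactly this trade-off and produces the desired bound. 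The general argument follows the same template: the prefactor $(Ae(H))^{e(H)}$ absorbs the constants that accumulate across the $k$ inductive layers, the weight identity supplies the $p^{e(H)-k(k-1)}$ factor, and the matching structure together with the hypothesis $pN\ge 1$ supplies the $(2N)^{v(H)-k}$ factor.
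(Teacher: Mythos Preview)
Your plan diverges from the paper's proof, and the divergence is exactly where the proposal becomes vague. The paper does \emph{not} induct on $k$; it inducts on $e(H)$, using two different recursions (Lemma~\ref{lem:recursive-H}): either delete a single edge $e$ (recursion~\eqref{eq:recursion1}), or delete the two endpoints of an edge (recursion~\eqref{eq:recursion2}). Which recursion to use is decided by a case split: if every $H\setminus e$ still has a $k$-matching, use~\eqref{eq:recursion1} and keep $k$; otherwise there is an edge $e$ whose removal kills the $k$-matching, and a short structural argument shows such an $e$ is incident to at most $2(k-1)$ other edges, so deleting its endpoints removes at most $2k-1$ edges and~\eqref{eq:recursion2} closes the induction with $k$ replaced by $k-1$. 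This degree bound on the ``critical'' matching edge is the idea that makes the exponents match; your outline never invokes it.

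The specific gap is your claim that ``the general argument follows the same template'' as the star. For $K_{1,m}$ the binomial identity works because the $d_j-1$ incident edges connect $V(e_t)$ to \emph{freely chosen} isolated vertices of $H_j$, so summing over $\phi$ is the same as summing over choices of these endpoints. For general $H$, the incident edges go from $V(e_t)$ to \emph{specific} vertices $\phi'(y)$ of a fixed labelled copy of $H_j$; the factor $\prod_{e'} g(\phi(e'))$ is entangled with $w_{\phi'}^{H_j}(t-1)$ and you can neither sum it out with a binomial identity nor factor it off to apply the inductive hypothesis. Already for $H=K_3$ (so $H_j$ is a single vertex $w$), the term to control is $\sum_c g((a,c))g((b,c))$, which is governed by the number of common neighbours of $a,b$ at time $t-1$; bounding this adaptively, uniformly over Builder strategies, is not a ``same template'' computation. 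The paper's case split sidesteps all of this: when the offending factor $p^{d_j-q_j}$ could be large (i.e.\ $d_j$ large), one is necessarily in Case~1 and uses the edge-deletion recursion instead.
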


For any edge $e\in H$,
write $H\backslash e$ for the graph formed by removing the edge
$e$ from $H$. If $U$ is a subset of the vertices of $H$, write $H\backslash U$
for the induced subgraph of $H$ on the complement of $U$. We begin by proving
the following pair of recursive bounds on $t(H,p,N)$.

\begin{lem}\label{lem:recursive-H}
If $H$ is a simple labeled graph, then
\begin{equation}
t(H,p,N)\le p\sum_{e\in E(H)}t(H\backslash e,p,N)\label{eq:recursion1}
\end{equation}
and
\begin{equation}
t(H,p,N)\le (1+o(1))pN \min_{(u,v)\in E(H)}t(H\backslash\{u,v\},p,N),\label{eq:recursion2}
\end{equation}
where the $o(1)$ term tends to $0$ as $pN \rightarrow \infty$.
\end{lem}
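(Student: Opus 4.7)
The plan is to prove the two recursions by related but distinct techniques. For (\ref{eq:recursion1}), I would use linearity of expectation, partitioning each potential embedding by which of its edges is the last to be queried. For (\ref{eq:recursion2}), I would combine a pointwise combinatorial inequality with a Chernoff concentration argument on the total number of successfully built edges.

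For (\ref{eq:recursion1}), fix any Builder strategy $S$. For each labeled embedding $\phi\colon V(H)\hookrightarrow V(G)$ and each $e\in E(H)$, let $A_{\phi,e}$ be the event that every edge $\phi(e')$ with $e'\ne e$ has been queried and successfully built strictly before $\phi(e)$ is first queried. Partitioning by the last queried edge of $\phi$ gives
\[
\Pr{}_S[\phi \text{ valid}] = \sum_{e\in E(H)} p\cdot\Pr{}_S[A_{\phi,e}],
\]
using the key observation that $A_{\phi,e}$ is measurable with respect to the history strictly before $\phi(e)$'s coin flip, and is therefore independent of whether $\phi(e)$ itself is built. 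Since $A_{\phi,e}$ implies that $\phi$, viewed as a map on $V(H)$, is a valid embedding of $H\setminus e$ in the final graph, summing over $\phi$ yields $\sum_\phi \Pr_S[A_{\phi,e}] \le \mathbb{E}_S[\#\mathrm{emb}(H\setminus e)]\le t(H\setminus e, p, N)$. Summing over $e$ and taking the supremum over $S$ completes the proof of (\ref{eq:recursion1}).

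For (\ref{eq:recursion2}), fix an edge $(u,v)\in E(H)$. Each labeled embedding $\phi$ of $H$ is uniquely determined by its restriction $\psi = \phi|_{V(H)\setminus\{u,v\}}$ together with the ordered pair $(\phi(u),\phi(v))$, which must lie on an edge of $G$. This yields the pointwise bound $\#\mathrm{emb}(H)\le 2Y\cdot\#\mathrm{emb}(H\setminus\{u,v\})$, where $Y=|E(G)|$ and the factor $2$ accounts for the two orientations of an unordered edge (and will be absorbed into $1+o(1)$). Since each of the $N$ distinct queries succeeds independently with probability $p$, we have $Y\sim\mathrm{Binomial}(N,p)$, so Chernoff gives $\Pr[Y\ge(1+\varepsilon)pN]\le e^{-\Omega(\varepsilon^2 pN)}$. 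Writing $X=\#\mathrm{emb}(H\setminus\{u,v\})$ and splitting $\mathbb{E}_S[YX]$ on whether $Y$ lies in the typical range,
\[
\mathbb{E}_S[YX]\le (1+\varepsilon)pN\cdot\mathbb{E}_S[X] + N(2N)^{v(H)-2}e^{-\Omega(\varepsilon^2 pN)},
\]
where the crude bound $X\le (2N)^{v(H)-2}$ controls the tail. Since $\mathbb{E}_S[X]\le t(H\setminus\{u,v\},p,N)$, choosing $\varepsilon\to 0$ slowly as $pN\to\infty$ yields the desired inequality.

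The main obstacle is the coupling between $Y$ and $X$ in the argument for (\ref{eq:recursion2}): both are monotone in the underlying coin flips, so FKG pushes the inequality in the wrong direction and one cannot simply factor $\mathbb{E}[YX]$ as $\mathbb{E}[Y]\mathbb{E}[X]$. The truncation argument above sidesteps this by exploiting that the bad event $\{Y\ge(1+\varepsilon)pN\}$ is so rare in the regime $pN\to\infty$ that even the trivial upper bound on $X$ leaves an error that is negligible compared to the main term $pN\cdot t(H\setminus\{u,v\},p,N)$. The degenerate case where $t(H\setminus\{u,v\},p,N)$ is vanishingly small can be handled directly from the pointwise inequality, since then $\mathbb{E}_S[\#\mathrm{emb}(H)]$ is also small.
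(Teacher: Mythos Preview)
Your approach is essentially the paper's: partition by the last-built edge for \eqref{eq:recursion1}, and combine the pointwise bound $\#\mathrm{emb}(H)\le 2Y\cdot\#\mathrm{emb}(H\setminus\{u,v\})$ with Chernoff concentration of $Y$ for \eqref{eq:recursion2}. You are in fact more careful than the paper on one point: the paper simply asserts that since the number of edges concentrates around $pN$ ``the inequality follows,'' whereas you correctly flag that $Y$ and $X=\#\mathrm{emb}(H\setminus\{u,v\})$ are positively correlated and resolve this by truncating on the event $\{Y\le(1+\varepsilon)pN\}$. One quibble: the factor of $2$ from orienting the edge $(\phi(u),\phi(v))$ cannot literally be ``absorbed into $1+o(1)$'' as you write---the paper elides this too, and since the lemma is only ever applied (in Theorem~\ref{thm:t-bound}) with an absolute constant $A>1$ in place of $1+o(1)$, it is harmless for the downstream use.
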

\begin{proof}
Suppose Builder follows an optimal strategy which achieves $t(H,p,N)$
expected copies of $H$ in $N$ turns. For each copy $H_{i}$ of $H$
that appears during the game, distinguish the edge $e_{i}$ which
is built last in $H_{i}$. For each $e\in E(H)$, let $t_{e}(H,p,N)$
be the maximum expected number of copies of $H$ that Builder can build, only counting those copies of $H$ in which $e$ is the last edge built. Then, clearly,
\[
t(H,p,N)\le\sum_{e\in E(H)}t_{e}(H,p,N).
\]
Furthermore, $t_{e}(H,p,N)\le p t(H\backslash e,p,N)$, since each
copy of $H\backslash e$ can become exactly one copy of $H$ with
success rate $p$ if $e$ is built. Inequality \eqref{eq:recursion1}
follows.

As for recursion \eqref{eq:recursion2}, note simply that the number
of copies of $H$ is bounded by the number of choices for the images
of the vertices $u,v$ which are connected by an edge times the number of copies of
$H\backslash\{u,v\}$. By the Chernoff bound, the number of choices of an edge is tightly
concentrated around $pN$, so the inequality follows.
\end{proof}

It remains to apply these inequalities recursively.

\vspace{3mm}
\noindent
{\it Proof of Theorem~\ref{thm:t-bound}. } By (\ref{eq:recursion2}), there is an absolute constant $A > 1$ for which

\begin{equation}\label{eq:recursion2-explicit}
t(H,p,N)\le ApN \min_{(u,v)\in E(H)} t(H\backslash\{u,v\},p,N)
\end{equation}
whenever $pN \ge 1$.

We proceed by induction on the number of edges in $H$. When $H$ is
an empty graph on $m$ vertices, the result is trivial with $k=0$.
Let $H$ be a labeled graph for which the induction hypothesis is
true for every graph with fewer edges than $H$. Let $e\in E(H)$ run over all edges of $H$. We break into two cases:

\vspace{2mm}
{\it Case 1.} Every $H\backslash e$ contains a $k$-matching. Then, by induction
and \eqref{eq:recursion1}, it follows that
\begin{align*}
t(H,p,N) & \le p\sum_{e\in E(H)}t(H\backslash e,p,N)\\
 & \le pe(H) (A(e(H)-1))^{e(H)-1} \cdot p^{e(H)-1-k(k-1)}(2N)^{v(H)-k}\\
 & \le (Ae(H))^{e(H)} p^{e(H)-k(k-1)}(2N)^{v(H)-k},
\end{align*}
as desired.

\vspace{2mm}
{\it Case 2.} There exists $e\in E(H)$ for which $H\backslash e$ contains no $k$-matching.
Then, let $e_{2},\ldots,e_{k}$ be $k-1$ edges which complete a $k$-matching
of $H$ containing $e$. The edges incident
to $e$ must all be incident to one of the $e_{i}$ or else $H\backslash e$
would contain a $k$-matching. Also, $e$ cannot form a $4$-cycle
with any $e_{i}$ for the same reason. From these two facts one finds
that $e$ can be incident to at most $2(k-1)$ other edges in total.
Let $H'$ be the graph obtained from $H$ by removing the two vertices
of $e$ from $H$. Applying the induction hypothesis on $H'$,
which is a graph on $v(H)-2$ vertices with at least $e(H)-(2k-1)$
edges and a $(k-1)$-matching, we find that
\[
t(H',p,N)\le (Ae(H'))^{e(H')} p^{e(H)-(2k-1)-(k-1)(k-2)}(2N)^{v(H)-2-(k-1)}.
\]
Combining this with inequality \eqref{eq:recursion2-explicit}, we have
\begin{align*}
t(H,p,N) & \le A pN\cdot t(H',p,N)\\
 & \le (Ae(H))^{e(H)} p^{e(H)-k(k-1)}(2N)^{v(H)-k},
\end{align*}
as desired.\hfill\qedsymbol

\vspace{3mm}

For our purposes, we will always assume $pN \ge 1$. Otherwise, with high probability at most a constant number of edges are built successfully in the Subgraph Query Game, so $t(H,p,N)$ will be negligibly small.

Since $t(H,p,N) < 1/2$ implies $f(H,p) > N$, Theorem~\ref{thm:t-bound} immediately implies Theorem~\ref{thm:f-bound}. Comparing this with Proposition~\ref{prop:f-bound-clique}, we note that while Theorem~\ref{thm:f-bound} gives a bound for all graphs $H$, it gives an inferior quantitative dependence on $e(H)$. While this stronger quantitative dependence in Proposition~\ref{prop:f-bound-clique} seems to be only a minor benefit, it was needed in the proof of Theorem~\ref{thm:opt-p}, which is why we retained the proof.

For large $m$, this bound only gives Corollary~\ref{cor:f-bound-asymp}, that $f(K_m, p) = \Omega_m( p^{-(2-\sqrt{2})m +O(1)})$, which is still far from the conjectured growth rate $p^{-\frac{2}{3}m+O(1)}$. However, for $m \le 5$, Theorem~\ref{thm:f-bound} can be used to pin down the asymptotic growth rate of $f(K_m, p)$, proving Theorem~\ref{thm:nle5}.

\vspace{3mm}
\noindent
{\it Proof of Theorem~\ref{thm:nle5}. }
The upper bounds for these cases are proved in Section~\ref{sec:upper-bound-strat}. Apply Theorem~\ref{thm:f-bound} by taking $k=1$ for $m=3$ and $k=2$ for $m=4, 5$ to get the desired lower bounds. \hfill \qedsymbol

\vspace{3mm}
\noindent

When $m\ge 6$, the matching argument of Theorem~\ref{thm:t-bound} does not seem sufficient for determining the exact growth rate of $f(K_m, p)$. Indeed, we will now exhibit an infinite family of graphs for which Theorem~\ref{thm:t-bound} is tight. 

For $k\ge 1$, let $H_k$ be the graph on $2k$ vertices $a_i, b_i$, $1\le i \le k$, such that $a_i \sim a_j$ for all $i\ne j$, $b_i \not\sim b_j$ for all $i \ne j$, and $a_i \sim b_j$ if and only if $i\le j$. Thus $H_k$ is a split graph consisting of a $k$-clique, a $k$-independent set, and a half graph between them. We show that Theorem~\ref{thm:t-bound} is tight for $H_k$ up to a constant factor.

Note that the construction below requires $N$ to grow like a tower of $p^{-1}$'s of height $k$. It is possible that the same lower bound is false in the regime $N \le p^{-C}$ for any $C = C(k) > 0$.

\begin{thm}\label{thm:t-bound-tight}
For every $k\ge 1$, the graph $H_k$ defined above contains a $k$-matching and, for any $p\in(0,1)$, 
\[ 
t(H_k, p, N) = \Omega_k( p^{e(H_k)-k(k-1)}N^{v(H_k)-k}),
\]
provided $N$ is sufficiently large in terms of $p$.
\end{thm}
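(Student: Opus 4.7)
The plan is to describe a recursive ``pivot-and-recurse'' Builder strategy that achieves the claimed lower bound, then verify by induction on $k$ that it yields $\Omega_k((pN)^k)$ copies of $H_k$ in expectation. The base case $k=1$ is immediate, since $H_1$ is a single edge and querying $N$ arbitrary fresh pairs gives $t(H_1,p,N)\ge pN$.

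The inductive step rests on a key structural observation about $H_k$: the vertex $a_1$ is \emph{universal} (adjacent to every other vertex of $H_k$), while $b_1$'s only neighbor in $H_k$ is $a_1$. Consequently, the induced subgraph on $\{a_2,\dots,a_k,b_2,\dots,b_k\}$ is isomorphic to $H_{k-1}$ under the relabelling $a_i\mapsto a_{i-1}$, $b_i\mapsto b_{i-1}$, and $H_k$ itself is obtained from a disjoint copy of $H_{k-1}$ by adding the universal vertex $a_1$ together with a pendant $b_1$ adjacent only to $a_1$.

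Given this, here is the strategy for $H_k$ using $N$ queries, assuming inductively that $t(H_{k-1},p,M)\ge c_{k-1}(pM)^{k-1}$ for some $c_{k-1}>0$ and all $M\ge M_{k-1}(p)$. Fix a vertex $a_1$ and a parameter $\alpha\in(0,1)$; the optimal choice will turn out to be $\alpha=1/k$. First, spend $Q_1=\alpha N$ queries connecting $a_1$ to $Q_1$ fresh vertices, and let $D=|N(a_1)|$. By a standard Chernoff bound, $D\ge pQ_1/2$ with probability $1-o_k(1)$. Next, spend the remaining $Q_2=(1-\alpha)N$ queries \emph{inside} the vertex pool $N(a_1)$ by executing the inductive $H_{k-1}$-strategy there; the hypothesis yields at least $c_{k-1}(pQ_2)^{k-1}$ copies of $H_{k-1}$ in expectation (provided $Q_2\ge M_{k-1}(p)$). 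Finally, since $a_1$ is universal in $H_k$ and the pendant $b_1$ only needs to be adjacent to $a_1$, every such copy of $H_{k-1}$ together with any vertex $b_1\in N(a_1)$ disjoint from it yields a distinct copy of $H_k$ rooted at $a_1$. Multiplying, the expected number of $H_k$-copies built is at least
\[
(D-2k)\cdot c_{k-1}(pQ_2)^{k-1}=\Omega_k\bigl(pQ_1\cdot(pQ_2)^{k-1}\bigr)=\Omega_k\bigl((pN)^k\bigr),
\]
with $\alpha=1/k$ absorbed into the hidden constant $c_k$.

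The main obstacle is ensuring that the recursion fits in the available ``space'': for Builder to be able to spend $Q_2$ queries inside $N(a_1)$ we need $Q_2\le\binom{D}{2}$, and for the inductive hypothesis to apply we need $Q_2\ge M_{k-1}(p)$. Because $D=\Theta(pN/k)$, these two requirements combined at each of the $k$ recursion levels force $N$ to grow like a tower of $p^{-1}$'s of height $k$, matching the remark preceding the theorem. A routine union bound over the $k$ Chernoff events controls concentration, and the construction is arranged to use disjoint fresh vertex pools at each level so that the copies produced at different recursion stages do not conflict.
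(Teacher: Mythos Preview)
Your structural observation---that $a_1$ is universal in $H_k$, that $b_1$ is a pendant off $a_1$, and that $H_k\setminus\{a_1,b_1\}\cong H_{k-1}$---is correct, but the recursion built on it does not go through. The inductive hypothesis $t(H_{k-1},p,M)\ge c_{k-1}(pM)^{k-1}$ is a statement about the game on an unbounded vertex set; when you recurse inside $N(a_1)$ you have only $D\approx p\alpha N$ vertices available but wish to spend $Q_2=(1-\alpha)N$ queries there. The constraint $Q_2\le\binom{D}{2}$ you check merely guarantees enough \emph{pairs}; it does not guarantee that the witnessing strategy fits in the available vertex pool. Concretely, your own $H_{k-1}$ strategy begins by connecting the new pivot $a_2$ to $\Theta_k(Q_2)$ distinct vertices of $N(a_1)$, which is impossible since $|N(a_1)|\approx p\alpha N\ll (1-\alpha)N$ for small $p$. (With $\alpha=1/k$ at the outer level and $1/(k-1)$ at the next, the inner step asks for exactly $N/k$ vertices but only $pN/k$ are present.) Your remark about ``disjoint fresh vertex pools at each level'' is at odds with recursing inside $N(a_1)$ and suggests this issue was not fully confronted. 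If one patches by connecting $a_2$ to all of $N(a_1)$ instead, the contribution from $b_i$ drops to order $p^iN$ rather than $pN$, and the product over the $k$ levels collapses to order $p^{k(k+1)/2}N^k$, far short of $(pN)^k$.

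The paper's construction avoids this by selecting \emph{many} pivots at each level rather than one. At stage $i$ Builder chooses $N_i=N/(k|U_i|)$ vertices of the current set $U_i$, queries all edges from each of them to the rest of $U_i$, and passes to their common neighborhood $U_{i+1}$. Because $N_i\cdot|U_i|=N/k$ is held constant across levels, the count factors as $\prod_i N_i\cdot p|U_i|\ge (pN/k)^k$: the growing number of pivots exactly compensates for the rapidly shrinking vertex pool, something a single-pivot recursion cannot achieve.
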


\begin{proof}
In fact, $H_k$ has $k^2$ edges, $2k$ vertices, and contains a unique $k$-matching ${(a_i, b_i)}_{i\le k}$. It will suffice to show that for all $p\in (0,1)$ and $N$ sufficiently large in terms of $p$,
\[ 
t(H_k, p, N) = \Omega_k( p^{k}N^{k}).
\]
Builder's strategy will involve constructing a nested sequence of vertex sets $U_{1},U_{2},\ldots,U_{k}$.
The first set $U_{1}$ is just an arbitrary set of $N/k$ vertices.
In each successive $U_{i}$, assuming $|U_i| 
\ge \sqrt{N}$ we can pick $N_{i}=N/(k|U_{i}|)$ vertices
$a_{i}^{(1)},a_{i}^{(2)},\ldots,a_{i}^{(N_{i})}\in U_{i}$ and try to build
all edges from each $a_{i}^{(j)}$ to every other vertex in $U_{i}$. This step takes at most $N/k$ turns.
The set $U_{i+1}$ is then defined to be the common neighborhood of $a_{i}^{(1)},\ldots,a_{i}^{(N_{i})}$ within $U_i$. 

Repeating this process $k$ times, we use at most $N$ turns. For $N$ sufficiently large, with high probability the edge density from $a_{i}^{(1)},a_{i}^{(2)},\ldots,a_{i}^{(N_{i})}$ to the rest of $|U_i|$ is $(1+o(1))p$.
Thus, the number of copies of $H_{k}$ built in this way is bounded below by
\[
\prod_{i}(N_{i}\cdot p|U_{i}|)\ge (1+o(1))(pN)^{k}/k^{k},
\]
since we can choose $a_{i}$ out of any of the $N_{i}$ vertices $a_{i}^{(1)},\ldots,a_{i}^{(N_{i})}$
and $b_{i}$ out of any of its $(1+o(1))p|U_{i}|$ neighbors.
As long as $N$ is large enough that $|U_{k}|\ge\sqrt{N}$ with high probability, there will be enough vertices in the last set $U_k$ to perform the strategy. This argument
successfully constructs $\Theta_k(p^kN^k)$ copies of $H_k$. Taking $N$ to be a tower of $(2+2p^{-1})$'s of height $k$ is sufficient.
\end{proof}

We finish the subsection with an application of the preceding results and prove Theorem~\ref{thm:opt-p}. 
Recall that this theorem states that for $m$ fixed and $n\rightarrow\infty$, a value of $p$ for which $\tilde{r}_{\text{\normalfont rand}}(m,n) \le 3\tilde{r}(m,n;p)$ satisfies $p = \Theta(\frac{m}{n}\log{\frac{n}{m}})$.

\vspace{3mm}
\noindent
{\it Proof of Theorem \ref{thm:opt-p}.}
By Theorem~\ref{thm:detailed-upper-bound}, 
\begin{equation*}\label{eq:f-known}
f(K_m,p) = O_m(p^{-2m/3}),
\end{equation*}
and in fact it can be checked from the proof that the explicit dependence on $m$ is polynomial. Moreover, using Proposition~\ref{prop:f-bound-clique} with $c=0$, we have that
\[
f(K_m,p) \ge \frac{1}{4} p^{-\frac{m-1}{2}} \ge \frac{1}{4}p^{-m/3},
\]
since $\frac{m-1}{2}\ge \frac{m}{3}$ for $m\ge 3$. Putting all this together, there exists an absolute constant $A>0$ for which
\begin{equation}\label{eq:f-known-explicit}
\frac{1}{4} p^{-m/3} \le f(K_m,p) \le m^A p^{-2m/3}
\end{equation}
for all $m\ge 3$, $p\in (0,1)$. 

By Theorem~\ref{thm:query-online-connection}, we have
\[
\tilde{r}(m,n;p) \le \min\{f(K_m, p), f(K_n, 1-p)\} \le 3\tilde{r}(m,n;p).
\]
Pick some $p_0 \in (0,1)$ which maximizes the function $\min\{f(K_m, p), f(K_n, 1-p)\}$. Such a $p_0$ exists because $f(K_m,p)$ is nonincreasing in $p$, $f(K_n,1-p)$ is nondecreasing, and both are integer-valued. Then, $\tilde{r}_{\text{\normalfont rand}}(m,n)\le 3\cdot \tilde{r}(m,n;p_0)$. It remains to check that we could have chosen $p_0 = \Theta(\frac{m}{n}\log{\frac{n}{m}})$. By (\ref{eq:f-known-explicit}) and the fact that the bounds are continuous, we have
\[
\frac{1}{4}p_0^{-\frac{1}{3}m} \le n^A (1-p_0)^{-\frac{2}{3}n}
\]
and
\[
\frac{1}{4}(1-p_0)^{-\frac{1}{3}n} \le m^A p_0^{-\frac{2}{3}m}.
\]

Since $m\ge 3$ is fixed and $n\rightarrow \infty$, the first inequality implies $p_0\rightarrow 0$. In particular, $\log(1-p_0)=-p_0 +O(p_0^2)$. Taking the logarithm of both sides in the inequalities above, we have
\[
-\frac{1}{3} m \log p_0 -\log 4 \le A \log n + \frac{2}{3} n (p_0 +O(p_0 ^2))
\]
and
\[
\frac{1}{3}n(p_0 +O(p_0 ^2)) -\log 4 \le A \log m - \frac{2}{3}m\log p_0.
\]

Taking $n\rightarrow \infty$ and dividing through by $mp_0$, these inequalities combine to show
\[
\frac{\log (1/p_0)}{p_0} = \Theta\left(\frac{n}{m}\right)
\]
and it follows that $p_0 = \Theta(\frac{m}{n}\log{\frac{n}{m}})$, as desired. \hfill\qed

\vspace{3mm}

\subsection{The value of $t(K_m,p,N)$ for large $N$}\label{sec:t-large-N}

In this section, we investigate the behavior of the function $t(K_m,p,N)$ as $N\rightarrow \infty$. We find that when $N$ is very large, the essentially optimal strategy for building as many copies of $K_m$ as possible is to fill in the edges of a clique on $\sqrt{2N}$ vertices. This is in stark contrast with the rather delicate procedure described in Section~\ref{sec:upper-bound-strat} to build a single copy of $K_m$.

\subsubsection{Chernoff bounds and subjumbledness}

We will need a standard lemma (see, for example,~\cite[Theorem 2.1]{KrSu}) saying that with high probability all moderately large induced subgraphs of a random graph $G(N,p)$ have the expected number of edges. Recall that if $U\subset V(G)$ is a vertex subset of $G$, we write $G[U]$ for the induced subgraph on $U$.

\begin{lem}
\label{lem:chernoff}If $G=G(N,p)$ and $\varepsilon>0$, then, with high probability,
\[
e(G[U])=(1\pm\varepsilon)p\binom{|U|}{2}
\]
for all $|U|=\Omega_{\varepsilon}(p^{-1}\log N)$. 
\end{lem}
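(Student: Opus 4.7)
The plan is to apply the multiplicative Chernoff bound to each fixed subset $U$ and then take a union bound, first over subsets of a given size and then over all admissible sizes.

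First I would fix a constant $C = C(\varepsilon)$ to be chosen (in the end, $C$ will be a large multiple of $\varepsilon^{-2}$) and set $k_0 = C p^{-1} \log N$. For any fixed $U \subseteq V(G)$ with $|U|=k \ge k_0$, the random variable $e(G[U])$ is binomially distributed with parameters $\binom{k}{2}$ and $p$, so the standard multiplicative Chernoff bound (see, e.g., Appendix A of \cite{AlSp}) gives
\[
\Pr\!\left[\bigl|e(G[U]) - p\tbinom{k}{2}\bigr| > \varepsilon p\tbinom{k}{2}\right] \le 2\exp\!\left(-\tfrac{\varepsilon^2}{3}\, p\tbinom{k}{2}\right).
\]

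Next I would union bound over all $\binom{N}{k} \le (eN/k)^{k} \le N^{k}$ subsets of size $k$: the probability that some such $U$ fails the desired concentration is at most
\[
2 N^{k} \exp\!\left(-\tfrac{\varepsilon^2}{6}\, p k^{2}\right) \le 2\exp\!\left(k\log N - \tfrac{\varepsilon^2}{6}\, p k^{2}\right).
\]
Because $k \ge C\varepsilon^{-2} p^{-1}\log N$, choosing $C$ large enough makes the exponent at most $-k\log N$, so the bound for a single size $k$ is at most $2 N^{-k} \le 2 N^{-k_0}$.

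Finally, summing over all $k$ with $k_0 \le k \le N$ yields a total failure probability of at most $2N\cdot N^{-k_0} = o(1)$ as $N \to \infty$, since $k_0 = \Omega_{\varepsilon}(p^{-1}\log N) \to \infty$ (we may assume $p$ is not so large that the statement is vacuous; otherwise the lemma reduces to the same argument for finitely many sizes). With high probability, every $U$ with $|U|\ge k_0$ simultaneously satisfies $e(G[U]) = (1\pm\varepsilon)p\binom{|U|}{2}$, which is exactly the statement of the lemma.

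The argument is entirely routine; the only point requiring any care is calibrating the constant in $k_0$ so that the Chernoff exponent $\tfrac{\varepsilon^2}{6}pk^2$ dominates the entropy term $k\log N$ arising from the union bound, and this is arranged by the hypothesis $|U|=\Omega_{\varepsilon}(p^{-1}\log N)$ with a sufficiently large implicit constant depending on $\varepsilon$.
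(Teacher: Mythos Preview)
Your argument is correct and is exactly the standard Chernoff-plus-union-bound proof of this fact; the paper itself does not give a proof but simply cites the result as a standard lemma (referring to \cite[Theorem 2.1]{KrSu}). One tiny cosmetic point: the step replacing $\tfrac{\varepsilon^2}{3}p\binom{k}{2}$ by $\tfrac{\varepsilon^2}{6}pk^2$ goes the wrong way since $\binom{k}{2}<k^2/2$, but this is harmless---use $\binom{k}{2}\ge k^2/4$ (for $k\ge 2$) or simply absorb the discrepancy into the constant $C$.
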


In the literature (see~\cite{KrSu} and its references), this pseudorandomness property is usually called jumbledness. We also use this term, though in a slightly different way to how it is usually used.

\begin{defn}
A graph $G$ is {\it $(p,M,\varepsilon)$-jumbled} if, for every $U\subseteq V(G)$
with $|U|\ge M$,
\[
e(G[U])=(1\pm\varepsilon)p\binom{|U|}{2}.
\]
A graph $G$ is {\it $(p,M,\varepsilon)$-subjumbled} if it is a subgraph of some $(p,M,\varepsilon)$-jumbled graph.
\end{defn}

In what follows, we will show that subjumbled graphs
cannot have too many cliques. For the graph-building problem, the
heuristic is that it's not possible to build more copies of $H$ in
a known jumbled graph $G$ with $pN$ queries than it is 
with $N$ queries in $G(N,p)$.

\subsubsection{Degeneracy}

Define a graph to be $d$-degenerate if there exists an ordering of
the vertices $v_{1},\ldots,v_{n}$ such that $|N(v_{i})\cap\{v_{1},\ldots,v_{i-1}\}|\le d$ for all $i$. The following simple lemma is well known.

\begin{lem}
\label{lem:degeneracy}Every graph with $E$ edges is $\sqrt{2E}$-degenerate.
\end{lem}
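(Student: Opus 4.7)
The plan is to prove the standard characterization that $G$ is $d$-degenerate if and only if every subgraph of $G$ contains a vertex of degree at most $d$, and then verify that every subgraph of $G$ has a vertex of degree at most $\sqrt{2E}$. The ordering is produced by the usual peeling procedure: repeatedly identify a vertex of minimum degree in the current graph, declare it to be $v_n$ (then $v_{n-1}$, etc.), and delete it. If each graph encountered in this process has minimum degree at most $\sqrt{2E}$, then in the resulting ordering $v_1,\ldots,v_n$ each vertex $v_i$ has at most $\sqrt{2E}$ neighbors among $v_1,\ldots,v_{i-1}$, exactly the degeneracy condition.

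It therefore suffices to show that any subgraph $H \subseteq G$, say with $n'$ vertices and $E' \le E$ edges, has a vertex of degree at most $\sqrt{2E}$. I would split into two cases depending on the size of $H$. If $n' \ge \sqrt{2E}$, then the average degree of $H$ is $2E'/n' \le 2E/\sqrt{2E} = \sqrt{2E}$, and hence the minimum degree is at most $\sqrt{2E}$. If instead $n' < \sqrt{2E}$, then every vertex of $H$ trivially has degree strictly less than $n' \le \sqrt{2E}$.

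There is no real obstacle here; the only content is the case split, which is immediate. I would simply note both cases in a line or two and then invoke the peeling argument to conclude.
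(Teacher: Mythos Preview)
Your proof is correct and essentially identical to the paper's: both use the greedy peeling procedure and bound the minimum degree at each step by the smaller of the number of remaining vertices and the average degree $2E/i$, whose minimum is at most $\sqrt{2E}$. The only cosmetic difference is that the paper phrases the two bounds as $d(v_i)\le i-1$ and $d(v_i)\le 2E/i$ in a single line rather than as a case split on subgraph size.
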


\begin{proof}
We exhibit the degenerate ordering by picking the vertices backwards
from $v_{n}$ to $v_{1}$. At each step, pick $v_{i}$ to be the minimal
degree vertex in the current graph and delete it. Note that $d(v_{i})\le i-1$
because there are only $i$ points left and also $d(v_{i})\le\frac{2E}{i}$
because the sum of the degrees is at most $2E$ and $v_{i}$ has minimal
degree. It follows that at every step $d(v_{i})\le\min(i,\frac{2E}{i})\le\sqrt{2E}$, as desired.
\end{proof}

This is not quite sufficient for our purposes, but it gives the main idea. What we really need is a better understanding of degeneracy in jumbled graphs.
In what follows, given a candidate ordering $v_{1},\ldots,v_{n}$ of the vertices,
we write $N^{-}(v_{i})=N(v_{i})\cap\{v_{1},\ldots,v_{i-1}\}$ and $d^{-}(v_{i})=|N^{-}(v_{i})|$.

\begin{lem} \label{lem:main-degeneracy}
Any $(p,M,\varepsilon)$-subjumbled graph on $N$ edges is $\max((1+\varepsilon)M\sqrt{p},(1+\varepsilon)\sqrt{2pN})$-degenerate.
\end{lem}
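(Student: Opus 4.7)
The plan is to mimic the greedy argument for Lemma~\ref{lem:degeneracy}: repeatedly delete a minimum degree vertex from the current induced subgraph. What must be verified is that for every nonempty $U\subseteq V(G)$, the minimum degree of $G[U]$ is at most $D:=\max((1+\varepsilon)M\sqrt{p},(1+\varepsilon)\sqrt{2pN})$. Fix a $(p,M,\varepsilon)$-jumbled graph $G'\supseteq G$, and split on $|U|$.

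When $|U|\ge M$, jumbledness applied to $U$ itself gives $e(G[U])\le e(G'[U])\le (1+\varepsilon)p\binom{|U|}{2}$, so the average (hence minimum) degree in $G[U]$ is at most $(1+\varepsilon)p|U|$. Since also $e(G[U])\le N$, the minimum degree is at most $2N/|U|$. Taking the geometric mean of the two bounds yields
\[
d_{\min}(G[U])\le \min\!\left((1+\varepsilon)p|U|,\ \tfrac{2N}{|U|}\right)\le \sqrt{2(1+\varepsilon)pN}\le (1+\varepsilon)\sqrt{2pN},
\]
using $\sqrt{1+\varepsilon}\le 1+\varepsilon$.

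The main obstacle is the case $|U|<M$, where the jumbledness hypothesis is silent. The fix is to pad: choose any $U^*\subseteq V(G')$ with $U\subseteq U^*$ and $|U^*|=M$ (assuming, as we may, that $|V(G')|\ge M$). Then $e(G[U])\le e(G'[U^*])\le (1+\varepsilon)p\binom{M}{2}$, so the average degree in $G[U]$ is at most $(1+\varepsilon)pM^2/|U|$. Combining with the trivial bound $d_{\min}(G[U])\le |U|-1$ and again taking the geometric mean,
\[
d_{\min}(G[U])\le \min\!\left(|U|,\ \tfrac{(1+\varepsilon)pM^2}{|U|}\right)\le M\sqrt{(1+\varepsilon)p}\le (1+\varepsilon)M\sqrt{p}.
\]
In both regimes $d_{\min}(G[U])\le D$, so greedy deletion produces a $D$-degenerate ordering of $G$, as required.
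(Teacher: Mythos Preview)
Your argument is correct and essentially identical to the paper's: both use greedy minimum-degree deletion, split on whether the current set has size at least $M$, apply jumbledness directly (together with the global edge bound $N$) in the large case, and in the small case pad to a set of size $M$ to bound the edge count before combining with the trivial degree bound via the geometric mean. The only cosmetic difference is that the paper pads within the ordering (using $\{v_1,\dots,v_M\}$) whereas you pad to an arbitrary superset in the jumbled supergraph $G'$.
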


\begin{proof}
Let $H$ be a graph on $N$ edges that is a subgraph of some $(p,M,\varepsilon)$-jumbled graph $G$. We again pick vertices
of the graph $H$ in order of increasing degree among the remaining
vertices. Let the resulting order be $v_{1},\ldots,v_{n}$ and write
$U_{i}=\{v_{1},\ldots,v_{i}\}$. The construction guarantees that $v_{i}$ is
of minimal degree in $G[U_{i}]$. 

If $i\le M$, then the subgraph $H[v_{1},\ldots,v_{i}]$ has at most
as many edges as $H[v_{1},\ldots,v_{M}]$, which has at most $(1+\varepsilon)pM^{2}/2$
edges. Thus,
\begin{align*}
d^{-}(v_{i}) & \le \min\Big(i,\frac{(1+\varepsilon)pM^{2}}{i}\Big)\\
 & \le (1+\varepsilon)M\sqrt{p}.
\end{align*}
Otherwise, if $i>M$, the induced subgraph $H[v_{1},\ldots,v_{i}]$
has at most $(1+\varepsilon)pi^{2}/2$ edges and it clearly cannot have more
than $e(H)=N$ edges. Because $v_{i}$ is of minimal degree
in this induced subgraph,
\begin{align*}
d^{-}(v_{i}) & \le \frac{2}{i}\min\Big(\frac{(1+\varepsilon)pi^{2}}{2},N\Big)\\
 & = \min((1+\varepsilon)pi,2Ni^{-1})\\
 & \le (1+\varepsilon)\sqrt{2pN}
\end{align*}
and so every vertex has $d^{-}(v_{i})\le\max((1+\varepsilon)M\sqrt{p},(1+\varepsilon)\sqrt{2pN})$,
as desired.
\end{proof}

\subsubsection{Counting cliques}

We are ready to prove the following lemma. Recall the standard notation that $t(K,H)$ is the number of labeled graph homomorphisms from $K$ to $H$. Up to a lower order term, this is the same as counting labeled copies of $K$ in $H$. In fact, the equality is exact in the case we care about, where $K$ is a clique and $H$ is a simple graph without self-loops.

\begin{lem} \label{lem:t-bound}
For all $p\in(0,1)$, $m,M\ge2$ and $0<\varepsilon<1$, if $H$ is
a $(p,M,\varepsilon)$-subjumbled graph with $N$ edges, then
\begin{equation*}
t(K_{m},H)\le(1+O_{m}(\varepsilon+p^{1/2}N^{-1/2}))p^{\binom{m}{2}}(2p^{-1}N)^{\frac{m}{2}}+O_{m}\Big(\sum_{k=2}^{m-1}p^{\frac{m+k(k-3)}{2}}\cdot M^{m-k}\cdot N^{\frac{k}{2}}\Big).\label{eq:lem:t-bound}
\end{equation*}
\end{lem}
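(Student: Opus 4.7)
The proof proceeds by induction on $m$. The base case $m=2$ is immediate: $t(K_2,H) = 2N$, which is exactly the main term $p(2p^{-1}N)^{1}$ with empty error sum.

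For the inductive step, apply Lemma~\ref{lem:main-degeneracy} to fix an ordering $v_1,\ldots,v_n$ of $V(H)$ obtained by iteratively deleting minimum-degree vertices. This yields $d^-(v_i) \le d := \max((1+\varepsilon)M\sqrt{p},(1+\varepsilon)\sqrt{2pN})$. Combined with the identity
\[
t(K_m, H) = m\sum_{v} t(K_{m-1}, H[N^-(v)]),
\]
(valid because each labeled $K_m$ has a unique maximum vertex), this reduces the problem to bounding $t(K_{m-1},\cdot)$ on the subjumbled subgraphs $H[N^-(v)]$. Apply the inductive hypothesis and split the sum based on whether $d^-(v)\ge M$ (the main case, in which subjumbledness gives $N_v := e(H[N^-(v)]) \le (1+\varepsilon)p\binom{d^-(v)}{2}$) or $d^-(v) < M$ (handled by the trivial bound $t(K_{m-1},H[N^-(v)]) \le d^-(v)^{m-1}$).

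The key refinement yielding the sharp leading constant is a pointwise bound strictly stronger than the uniform estimate $d^-(v)\le d$: since $e(H[\{v_1,\ldots,v_i\}]) \le (1+\varepsilon)p\binom{i}{2}$ by subjumbledness whenever $i \ge M$, and $v_i$ was chosen with minimum degree in this subgraph, we obtain $d^-(v_i) \le (1+\varepsilon)p(i-1)$ throughout the range $M \le i \le \sqrt{2N/p}$, while in the tail $i > \sqrt{2N/p}$ one uses $d^-(v_i)\le 2N/i$. Summing $d^-(v_i)^{m-1}$ with these bounds gives
\[
\sum_v d^-(v)^{m-1} \le (1+O_m(\varepsilon))\, C_m\, p^{(m-2)/2}(2N)^{m/2} + O_m(M^m),
\]
for an explicit $m$-dependent constant $C_m$ whose value is exactly such that the factor of $m$ in the clique identity cancels and the leading term assembles into $(1+O_m(\varepsilon+p^{1/2}N^{-1/2}))p^{\binom{m}{2}}(2p^{-1}N)^{m/2}$. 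The $p^{1/2}N^{-1/2}$ contribution arises from replacing the discrete sum $\sum_i i^{m-1}$ by the continuous $n^m/m$ and from the slack $n \ge \sqrt{2N/((1+\varepsilon)p)}$ implied by $N \le (1+\varepsilon)p\binom{n}{2}$.

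The error terms $\sum_{k=2}^{m-1}p^{(m+k(k-3))/2}M^{m-k}N^{k/2}$ collect three contributions: vertices with $d^-(v)<M$ (giving the $M^{m-k}$ factors), the inductive error terms propagated from $m-1$ to $m$ via the same degeneracy-plus-jumbledness estimate, and lower-order boundary corrections to the main sums. The principal obstacle is the sharpness of the leading constant: the naive degeneracy bound $\sum_v d^-(v)^{m-1} \le d^{m-2}N$ overshoots by a factor of $\Theta(m)$, since in a typical vertex ordering only a $1/m$ fraction of $K_m$-subsets have any particular vertex as their maximum. Only the refined bound $d^-(v_i)\le(1+\varepsilon)p(i-1)$---which is tight up to $(1+\varepsilon)$ at each scale $i$---supplies the missing $1/m$ factor, and careful bookkeeping is required to ensure that the many $(1+\varepsilon)$ slacks consolidate into a single factor of $1+O_m(\varepsilon+p^{1/2}N^{-1/2})$ rather than compounding through the induction.
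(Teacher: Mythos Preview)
Your approach shares the inductive skeleton and the degeneracy ordering with the paper's proof, but there is a genuine gap at the crucial step. You claim that summing the pointwise caps $d^-(v_i)\le(1+\varepsilon)p(i-1)$ for $M\le i\le\sqrt{2N/p}$ and $d^-(v_i)\le 2N/i$ for $i>\sqrt{2N/p}$ yields $\sum_v d^-(v)^{m-1}\le(1+O_m(\varepsilon))\tfrac1m\,p^{(m-2)/2}(2N)^{m/2}$. This is false: the tail sum $\sum_{i>\sqrt{2N/p}}(2N/i)^{m-1}$ is of the \emph{same} order as the main range, contributing $\tfrac1{m-2}\,p^{(m-2)/2}(2N)^{m/2}$, so summing the caps actually gives the constant $\tfrac1m+\tfrac1{m-2}$ and overshoots the target leading term by the factor $\tfrac{2m-2}{m-2}$. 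You cannot argue the tail is empty, since a subjumbled $H$ may have $n$ far larger than $\sqrt{2N/p}$. What really saves the constant is the global constraint $\sum_i d^-(v_i)=N$: if the main range is near its caps then no edge budget remains for the tail, and vice versa. Your argument never invokes this constraint.

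The paper fixes precisely this by recursing on the remaining edge count rather than on the vertex index. Writing $t(m,N)$ for the maximum of $t(K_m,H)$ over subjumbled $H$ with $N$ edges, peeling off the last vertex of the degenerate order gives
\[
t(m{+}1,N)\le t(m{+}1,N{-}d)+(m{+}1)\,t(m,e^+(d)),\qquad d\le(1+\varepsilon)\sqrt{2pN},
\]
where the single bound $d\le(1+\varepsilon)\sqrt{2pN}$ already merges both of your pointwise caps via $\min((1+\varepsilon)pi,2e_i/i)\le(1+\varepsilon)\sqrt{2pe_i}$. Iterating and passing to the integral $\int_0^N t^*(m,p^2x)(2px)^{-1/2}\,dx$ then produces exactly the factor $\tfrac1{m+1}$, because the substitution $dx=d_i$ with $d_i\approx\sqrt{2px}$ automatically encodes $\sum d_i=N$. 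If you wish to rescue the vertex-indexed viewpoint, you would need to solve the constrained optimisation $\max\sum d_i^{m-1}$ subject to $\sum d_i=N$, $d_i\le(1+\varepsilon)p(i-1)$ and $d_i\le 2e_i/i$; this can be done but amounts to a repackaging of the edge-count recursion.
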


\begin{proof}
Take a degenerate ordering $v_{1},\ldots,v_{n}$ of $H$ such that
$v_{i}$ is of minimum degree in $H[v_{1},\ldots,v_{i}]$. By Lemma~\ref{lem:main-degeneracy}, 
\[
d^{-}(v_{i})\le\max((1+\varepsilon)M\sqrt{p},(1+\varepsilon)\sqrt{2pN}),
\]
where the second term dominates as soon as $N\ge M^{2}/2$.
Conditioning on whether or not $v_{n}$ is in the copy of $K_{m+1}$
we are counting, we see that
\[
t(K_{m+1},H)-t(K_{m+1},H\backslash v_{n})=(m+1)t(K_{m},H[N^{-}(v_{n})]).
\]
In particular, writing $t(m,N)=\max_{e(H)=N}^{*}t(K_{m},H)$, where
the maximum is taken over all graphs $H$ with $N$ edges that are subgraphs of
some $(p,M,\varepsilon)$-jumbled graph, we find that
\begin{equation}
t(m+1,N)\le\max_{d\le U(N)}\Big[t(m+1,N-d)+(m+1)t(m,e^{+}(d))\Big],\label{eq:t-system}
\end{equation}
where $U(N)=\max((1+\varepsilon)M\sqrt{p},(1+\varepsilon)\sqrt{2pN})$
and $e^{+}(d)$ is any upper bound on the number of edges in a graph on $d$
vertices that is a subgraph of a $(p,M,\varepsilon)$-jumbled graph.
The function $e^{+}$ we take is
\[
e^+(d) = \begin{cases}
\frac{d^{2}}{2} & d<M\sqrt{p}\\
(1+\varepsilon)\frac{pM^{2}}{2} & M\sqrt{p}\le d<M\\
(1+\varepsilon)\frac{pd^{2}}{2} & d\ge M.
\end{cases}
\]
To see that $e^+(d)$ is indeed an upper bound on the number of edges in a graph on $d$ vertices that is a subgraph of a $(p,M,\varepsilon)$-jumpled graph, we use the trivial bound when $d$ is small, extend to a
size $M$ set to use jumbledness when $d$ is somewhat close to $M$, and use
jumbledness directly for $d$ larger than $M$.

We are left to bound $t$ using the system of inequalities (\ref{eq:t-system}).
Write
\[
t^*(m,N) =  p^{\binom{m}{2}}(2p^{-1}N)^{\frac{m}{2}}
\]
for the approximate optimum value of $t(m,N)$. We induct on $m$. The base case is $t(2,N)=2N$. Assume, by induction, that for some $m\ge 2$,
\[
t(m,N)\le(1+O_{m}(\varepsilon+p^{1/2}N^{-1/2}))t^*(m,N)+O_{m}\Big(\sum_{k=2}^{m-1}p^{\frac{m+k(k-3)}{2}}\cdot M^{m-k}\cdot N^{\frac{k}{2}}\Big).
\]

We would like to show that the same inequality holds for $m+1$. Iterating (\ref{eq:t-system}), there exists a sequence $(d_i)_{i\ge 1}$ of positive integers summing to $N$ for which
\[
d_i \le U\Big(N-\sum_{1\le j < i} d_j\Big)
\]
and
\[
t(m+1,N) \le (m+1)\sum_{i\ge 1} t(m,e^+(d_i)),
\]
which implies, by the induction hypothesis, that
\begin{align}
t(m+1,N) & \le (1+O_m(\varepsilon + p^{1/2}N^{-1/2}))(m+1)\sum_{i\ge 1}t^*(m,e^+(d_i)) \nonumber \\
& + O_{m+1}\Big(\sum_{k=2}^{m-1}p^\frac{m+k(k-3)}{2} \cdot M^{m-k} \cdot \Big[\sum_{i\ge 1}e^+(d_i)^{\frac{k}{2}}\Big]\Big).\label{eq:t-recursive}
\end{align}

Since $e^+(d)$ is constant on the range $M\sqrt{p}\le d < M$, the optimal choice of $d_i$ will never have any points in this range. The main term of (\ref{eq:t-recursive}) can thus be separated into the sum over $d_i < M\sqrt{p}$ and the sum over $d_i \ge M$:
\begin{equation} \label{eq:ranges}
\sum_{i\ge 1}t^*(m,e^+(d_i)) \le \sum_{d_i \ge M} t^*\Big(m, (1+\varepsilon)pd_i^2/2\Big) + \sum_{d_i < M\sqrt{p}} t^*\Big(m, (1+\varepsilon)d_i^2/2\Big).
\end{equation}
Note that $m\ge 2$, so $t^*(m,N)$ is a convex nondecreasing function in $N$. Also, the function $e^{+}(d)$ is nondecreasing and convex in $d$ except for the jump discontinuity at $d=M\sqrt{p}$. Therefore, in each of the ranges above, $t^*(m, \cdot)$ and $e^+(\cdot)$ are both convex nondecreasing functions.

To bound the first sum in (\ref{eq:ranges}), we pass to an integral. Write 
\[
N_i = N - \sum_{j\le i} d_i.
\]
Then
\begin{align*}
\sum_{d_i \geq M} t^*(m,e^+(d_i)) & \le \sum_{i\ge 1} t^*\Big(m, (1+\varepsilon)pd_i^2/2\Big) \\
& = \sum_{i\ge 1} \int_{N_i}^{N_{i-1}}\frac{t^*(m, (1+\varepsilon)pd_i^2/2)}{d_i} dx.
\end{align*}
Because $t^*(m,(1+\varepsilon)pd^2/2)/d$ is an increasing function of $d$ and $d_i \le U(N_{i-1}) = (1+\varepsilon)\sqrt{2pN_{i-1}}$, we have
\begin{align*}
\sum_{i\ge 1} \int_{N_i}^{N_{i-1}}\frac{t^*(m, (1+\varepsilon)pd_i^2/2)}{d_i} dx 
& \le \sum_{i\ge 1} \int_{N_i}^{N_{i-1}}\frac{t^*(m, (1+\varepsilon)p((1+\varepsilon)\sqrt{2pN_{i-1}})^2/2)}{(1+\varepsilon)\sqrt{2pN_{i-1}}} dx \\
& \le  \sum_{i\ge 1} \int_{N_i+(1+\varepsilon)\sqrt{2pN}}^{N_{i-1}+(1+\varepsilon)\sqrt{2pN}}\frac{t^*(m, (1+\varepsilon)p((1+\varepsilon)\sqrt{2px})^2/2)}{(1+\varepsilon)\sqrt{2px}} dx \\
& \le \int_{0}^{N+(1+\varepsilon)\sqrt{2pN}}\frac{t^*(m,(1+\varepsilon)^3p^{2}x)}{\sqrt{2px}}dx.
\end{align*}
We had to shift integrals in the second step to guarantee that every value of $x$ in the range of integration is at least $N_{i-1}$. 

Next, $t^*(m,N)$ is a polynomial in $N$, so we can absorb the $(1+\varepsilon)$ into the error term. Similarly, we can pull out an error term of $(1+(1+\varepsilon)\sqrt{2p/N})$ from the bounds of the integral to simplify. Reorganizing various error terms, we get
\[
\int_{0}^{N+(1+\varepsilon)\sqrt{2pN}}\frac{t^*(m,(1+\varepsilon)p^{2}x)}{\sqrt{2px}}dx
\le (1+O_{m+1}(\varepsilon + p^{1/2}N^{-1/2}))\int_{0}^{N}\frac{t^*(m,p^{2}x)}{\sqrt{2px}}dx.
\]
Finally, explicitly evaluating the integral, we have
\begin{align*}
\int_{0}^{N}\frac{t^*(m,p^{2}x)}{\sqrt{2px}}dx & = \int_{0}^{N}p^{\binom{m}{2}}(2p^{-1}p^{2}x)^{\frac{m}{2}}\frac{dx}{\sqrt{2px}}\\
 & = 2^{\frac{m-1}{2}}p^{\frac{m^{2}-1}{2}}\int_{0}^{N}x^{\frac{m-1}{2}}dx\\
 & = \frac{1}{m+1}2^{\frac{m+1}{2}}p^{\frac{m^{2}-1}{2}}x^{\frac{m+1}{2}}\Big|_{0}^{N}\\
 & = \frac{1}{m+1}p^{\binom{m+1}{2}}(2p^{-1}N)^{\frac{m+1}{2}} \\
 & = \frac{1}{m+1}t^*(m+1, N).
\end{align*}

Estimating the second sum in (\ref{eq:ranges}) trivially, we get

\begin{equation*}
\sum_{i\ge 1}t^*(m,e^+(d_i)) \le (1+O_{m+1}(\varepsilon+p^{1/2}N^{-1/2}))t^*(m+1,N)+O_{m+1}\Big(\frac{N}{M\sqrt{p}}t^*\Big(m,\frac{pM^{2}}{2}\Big)\Big).
\end{equation*}

To check the error terms in (\ref{eq:t-recursive}) match up
is similar: break up each sum into the sums over $d_i \ge M$ and $d_i < M\sqrt{p}$. The first sum is estimated by an integral and the second trivially. The result is
\[
O_{m+1}\Big(\sum_{k=2}^{m-1}p^\frac{m+k(k-3)}{2} \cdot M^{m-k} \cdot \Big[\sum_{i\ge 1}e^+(d_i)^{\frac{k}{2}}\Big]\Big) \le O_{m+1}\Big(\sum_{k=2}^{m}p^{\frac{m+1+k(k-3)}{2}}\cdot M^{m+1-k}N^{\frac{k}{2}}\Big),
\]
which is the right error term for $t(m+1,N)$, completing the induction.
\end{proof}

In particular, and this is essential, the implicit constants in this lemma do not
depend on $M$. As an immediate corollary, we now prove Theorem~\ref{thm:t-large-N}. Note that the $N$ above is the number of edges in $H$, which will correspond to $(1+o(1))pN$ below if $N$ is the number of queries made in the Subgraph Query Game.

\vspace{3mm}
\noindent
{\it Proof of Theorem~\ref{thm:t-large-N}.}
Applying the Chernoff bound from Lemma~\ref{lem:chernoff}, we see
that for any $\varepsilon>0$ we can take some $M=Cp^{-1}\log N$ so that the random graph $G(2N,p)$ is $(p,M,\varepsilon)$-jumbled with high probability. Also with high probability, the number of edges built in $N$ queries is $(1+o(1))pN$. It is easy to check that the exponentially small probabilities with which either of these are false have negligible impact on the value of $t(K_m,p,N)$. The subgraph $H$ built by Builder
must therefore satisfy the hypotheses of Lemma~\ref{lem:t-bound}
with $(1+o(1))pN$ edges.

The main term dominates the error terms for $N$ sufficiently large, giving the expected answer which is just $p^{\binom{m}{2}}(2N)^{\frac{m}{2}}$,
the number of $m$-cliques in $G(\sqrt{2N},p)$. This happens once the main term outgrows the largest error term, the term with $k=m-1$. This happens at $N = \Omega(p^{-(2m-3)}M^2)$, so it suffices to have $N \ge \omega(p^{-(2m-1)}\log^2{(p^{-1})})$. This proves the upper bound in Theorem~\ref{thm:t-large-N}. Of course, the lower bound is proved by the strategy of building all edges among $\sqrt{2N}$ vertices. \hfill \qedsymbol

\section{Concluding remarks} \label{sec:concluding-remarks}

It is an interesting problem to close the gap in the bounds for the online Ramsey number $\tilde{r}(m,n)$. In particular, we know that there are positive constants $c,c'$  for which $cn^3/(\log n)^2 \leq \tilde{r}(3,n) \leq c' n^3$ and it seems plausible that these bounds could be brought closer together. Indeed, we conjecture that the lower bound can be improved to $cn^3/\log n$ by considering the following Painter strategy motivated by the triangle-free process~\cite{B09}. Painter applies the triangle-free process to obtain an auxiliary triangle-free graph $G$ on vertex set $\{1, 2, \dots, r\}$ with $r=c_0 n^2/\log n$. Painter does not reveal this auxiliary graph. As before, we label vertices that reach degree $n/4$ with $1,\ldots,r$ as they arrive at degree $n/4$. When Builder adds an edge between two vertices in which both vertices have degree at least $n/4$, then these vertices have labels, say $i$ and $j$, and Painter paints the edge with the color of the edge $ij$ in $G$. Otherwise, they color the edge blue. This coloring clearly contains no red triangles, but it remains to show that it contains no blue $K_n$.

In studying the online Ramsey number, we were usually led by the idea that Builder's optimal strategy is to fill out an extremely sparse graph on the vertex set they touch. However, if Builder is restricted to play on a small vertex set, this intuition seems to go awry. If we define $\tilde{r}(m,n;N)$ in the same manner as the online Ramsey number but with the additional restriction that only $N$ vertices are allowed, then we conjecture that the function $\tilde{r}(m,n;N)$ increases substantially as $N$ decreases from $2 \tilde{r}(m,n)$, the maximum number of vertices in a graph with $\tilde{r}(m,n)$ edges, down to its minimal meaningful value $r(m,n)$.

The order of growth of $f(K_m, p)$ is still open for $m\ge 6$. In particular, Theorems~\ref{thm:f-bound} and~\ref{thm:detailed-upper-bound} show that $f(K_6, p) = \Omega(p^{-13/4})$ and $f(K_6,p) = O(p^{-10/3})$ and we conjecture that the upper bound is correct. This belief is rooted in our conviction that the upper bound for $t(H,p,N)$ given by Theorem~\ref{thm:t-bound}, upon which Theorem~\ref{thm:f-bound} relies, is not tight when $N$ is on the order of $f(H,p)$. Because of the examples in Theorem~\ref{thm:t-bound-tight}, these upper bounds can be tight when $N$ is very large, so further progress on this problem would need to be more sensitive to the size of $N$. It is plausible that any advance on this question and its generalizations could also impinge on our estimates for online Ramsey numbers.

\medskip

\noindent {\bf Acknowledgements.} We are extremely grateful to Joel Spencer for pointing out a serious flaw in our previous proof of Theorem~\ref{thm:alterations} which had been based on a generalization of the Lov\'asz Local Lemma~\cite{ErSp}. In the current version, we have a correct proof using a different approach. We would also like to thank the anonymous referee for some helpful remarks and Benny Sudakov for bringing the paper of Krivelevich~\cite{Kr1} to our attention.

\end{document}